\newcommand{\R}{{\mathbb R}}
\newcommand{\supp}{{\mathrm{supp}\,}}
\newcommand{\diam}{\mathrm{diam}}
\newtheorem{thm}{Theorem}[section]
\newtheorem{coro}[thm]{Corollary}
\newtheorem{lemma}[thm]{Lemma}
\newtheorem{pro}[thm]{Proposition}
\theoremstyle{definition}
\newtheorem{example}[thm]{Example}
\newtheorem{remark}[thm]{Remark}
\newcommand{\Hmm}[1]{\leavevmode{\marginpar{\tiny%
$\hbox to 0mm{\hspace*{-0.5mm}$\leftarrow$\hss}%
\vcenter{\vrule depth 0.1mm height 0.1mm width \the\marginparwidth}%
\hbox to 0mm{\hss$\rightarrow$\hspace*{-0.5mm}}$\\\relax\raggedright
#1}}}
\begin{document}
\title[Multi-way dual Cheeger constants]{Multi-way dual Cheeger constants and spectral bounds of graphs}
\author{Shiping Liu}
\address{Department of Mathematical Sciences, Durham University, DH1 3LE Durham, United Kingdom}
\email{shiping.liu@durham.ac.uk}
\begin{abstract}
We introduce a set of multi-way dual Cheeger constants and prove universal higher-order dual Cheeger inequalities for eigenvalues of normalized Laplace operators on weighted finite graphs. Our proof proposes a new spectral clustering phenomenon deduced from metrics on real projective spaces. We further extend those results to a general reversible Markov operator and find applications in characterizing its essential spectrum.

\smallskip
\noindent \textsc{Keywords.} Cheeger constants; Higher-order dual Cheeger inequalities; Spectral clustering; Markov operators; Essential spectrum
\end{abstract}

\maketitle
\section{Introduction and main ideas}
Cheeger constant, encoding global connectivity properties of the underlying space, was invented by Cheeger \cite{Cheeger1970} and related to the first non-zero eigenvalue of the Laplace-Beltrami operator on a compact Riemannian manifold, which is now well-known as Cheeger inequality. Afterwards, it was extended to discrete settings by several authors in spectral graph theory or Markov chain theory, see e.g. \cite{Dodziuk1984}, \cite{AM1985}, \cite{Alon1986}, \cite{LS1988}, \cite{Mohar88}, \cite{SJ1989}, \cite{DS1991}, \cite{Chung96}. Intriguingly, this stimulated research in many unexpected theoretical or practical areas, such as the explicit construction of expander graphs, see e.g. \cite{Alon1986}, \cite{Lubotzky1994}, \cite{Tanner1984}, \cite{PA2012}, graph coloring, image segmentation, web search, approximate counting, for which we refer to \cite{LGT2013}, \cite{KLLGT2013} for detailed references.

Recently, Miclo \cite{Miclo2008} (see also \cite{DJM2012}) introduced a set of multi-way Cheeger constants (alternatively called higher-order isoperimetric constants), $h(k)$, $k=1,2,\ldots$, in discrete setting and conjectured related higher-order Cheeger inequalities universal for any weighted graph. This conjecture was solved by Lee, Oveis Gharan and Trevisan \cite{LGT2013} by bringing in the powerful tool of random metric partitions developed originally in theoretical computer science. Moreover, their approach justifies the empirical spectral clustering algorithms in \cite{NJW2002} which are very popular and powerful tools in many practical fields (see e.g. \cite{Luxburg07}, \cite{Ruben13}). Amazingly, this new progress of spectral graph theory provides feedback to the setting of Riemannian manifold. Funano \cite{Funano2013} and Miclo \cite{Miclo2013} (with different strategies) extended the higher-order Cheeger inequalities to weighted Riemannian manifolds and found very important applications there.

In contrast to a Riemannian manifold, on a graph, the spectrum of the normalized Laplace operator is bounded from above by $2$. Explicitly, one can list them as
\begin{equation}
0=\lambda_1\leq\lambda_2\leq\cdots\leq\lambda_{N}\leq 2,
\end{equation}
where $N$ is the size of the graph $G$.
Therefore, a graph has its own particular spectral gaps $2-\lambda_k$ which have no counterparts in the Riemannian setting. In order to investigate the spectral gap $2-\lambda_N$, Bauer and Jost \cite{BJ13} introduced a dual Cheeger constant, $\overline{h}(1)$ in our notation below, encoding the bipartiteness property of the underlying graph. (Independently, a related constant called bipartiteness ratio on regular graphs was studied by Trevisan \cite{Trevisan2012}.) Explicitly, it holds that
\begin{equation}
 \text{connected }G \text{ is bipartite }\Leftrightarrow\text{ } \overline{h}(1)=1.
\end{equation}
(A graph is bipartite if its vertex set can be divided into two classes and edges are only permitted between two vertices from opposite classes.)
They then proved a dual Cheeger inequality, providing a strong quantitative version of the fact that $2-\lambda_N$ vanishes if and only if the underlying graph is bipartite. This has already found important applications for the convergence of random walks on graphs, synchronization for coupled map lattices \cite{BJ13} and characterizing behaviors of the essential spectrum of infinite graphs \cite{BHJ2012}.

In this paper, we introduce a set of multi-way dual Cheeger constants, $\overline{h}(k)$, $k=1,2,\ldots, N$, encoding more detailed information about how far/close a graph is from being a bipartite one. The dual relations between $\overline{h}(k)$ and $h(k)$ are manifested by the fact that
\begin{equation}
 G \text{ is bipartite }\Leftrightarrow\text{ } h(k)+\overline{h}(k)=1, \,\,1\leq k\leq N.
\end{equation}
In fact, if a graph can satisfy $h(k)+\overline{h}(k)=1$ for a small number $k$, then, roughly speaking, it actually has a large size bipartite subgraph in a reasonable sense (Proposition \ref{pro1} (\textrm{iii})). For example, it holds that for an odd cycle $\mathcal{C}_N$ (Proposition \ref{proOddcycle}), $$h(k)+\overline{h}(k)=1,\,\,\,2\leq k\leq N.$$ Recall that an odd cycle is not bipartite but very close to be bipartite. Moreover, this framework provides a new viewpoint about the previously defined constants. We see that the dual Cheeger constant of Bauer-Jost, $\overline{h}(1)$, is actually dual to $h(1)=0$ and the Cheeger constant $h(2)$ is dual to $\overline{h}(2)$.

We prove higher-order dual Cheeger inequalities, i.e., we derive estimates for the spectral gaps $2-\lambda_{N-k+1}$ in terms of $\overline{h}(k)$, which hold universally for any weighted finite graph (see Theorem \ref{TheoremMain}). This completes the picture about graph spectra and (dual) isoperimetric constants. Interestingly, our proof proposes a new type of spectral clustering via the top $k$ eigenfunctions employing metrics on real projective spaces. As in \cite{LGT2013}, the proof is in principle algorithmic and hence we anticipate the practical applications of this new spectral clustering.

The deep relations between higher eigenvalues and geometry of graphs have been explored in the works of Chung, Grigor'yan and Yau \cite{CY1995, CGY1996, CGY2000}. For discussions about the spectral gap $2-\lambda_N$ and curvature of graphs, we refer the readers to \cite{BJL2012}. In Markov chain theory, there is the fundamental work of Diaconis and Stroock \cite{DS1991} about various geometric bounds of eigenvalues, in particular, of $2-\lambda_N$. Note that the language of Markov chains and that of normalized graph Laplacian we use here can be translated into each other. For example, a chain is aperiodic if and only if its associated graph is not bipartite .

In this spirit, it turns out that our results can be applied to a very general setting. We extend the multi-way dual Cheeger constants and higher-order dual Cheeger inequalities to a reversible Markov operator $P$ on a probability space $(X, \mathcal{F}, \mu)$, following recent works of Miclo \cite{Miclo2013} and F.-Y. Wang \cite{Wang2014}. Let us denote the infimum (supremum, resp.) of the essential spectrum of $P$ by $\overline{\lambda}_{\text{ess}}(P)$ ($\lambda_{\text{ess}}(P)$, resp.). We obtain a characterization for $\overline{\lambda}_{\text{ess}}$  in terms of extended multi-way dual Cheeger constants $\overline{h}_P(k)$,
$$\overline{\lambda}_{\text{ess}}(P)>-1 \Leftrightarrow \inf_{k\geq 1}\overline{h}_P(k)<1.$$
It can be considered as the counterpart of F.-Y. Wang's new criterion for $\lambda_{\text{ess}}$ in terms of multi-way Cheeger constants $h_P(k)$,
$$\lambda_{\text{ess}}(P)<1 \Leftrightarrow \sup_{k\geq 1}h_P(k)>0.$$
Both arguments employ an approximation procedure developed by Miclo, by which he solves the conjecture of Simon and H{\o}egh-Krohn \cite{SH1972} in a semi-group context. A further discussion about the relations between $h_P(k)$ and $\overline{h}_P(k)$ enables us to arrive at
$$\sup_{k\geq 1}h_P(k)>0 \Leftrightarrow -1<\overline{\lambda}_{\text{ess}}(P)\leq \lambda_{\text{ess}}(P)<1.$$

\subsection{Statements of main results}
In order to put our results into perspective, we start with recalling the (higher-order) Cheeger inequalities.
Let $G=(V, E, w)$ be an undirected, weighted finite graph without self-loops. $V$ and $E$ stands for the set of vertices and edges, respectively.  We denote by $w_{uv}$ the positive symmetric weight associated to $u, v\in V$, where $e=\{u,v\}\in E$ (sometimes we also write $u\sim v$). For convenience, we may put $w_{uv}=0$ if $u, v$ are not connected by an edge. The degree $d_u$ of a vertex $u$ is then defined as $d_u:=\sum_{v,v\sim u}w_{uv}$.

The expansion (or conductance) of any non-empty subset $S\subseteq V$ is defined as
\begin{equation*}
\phi(S)=\frac{|E(S, \overline{S})|}{\text{vol}(S)},
\end{equation*}
where $\overline{S}$ represents the complement of $S$ in $V$, and $|E(S, \overline{S})|:=\sum_{u\in S, v\in \overline{S}}w_{uv}$, $\text{vol}(S):=\sum_{u\in S}d_u=|E(S, S)|+|E(S, \overline{S})|$.

Then, for every $k\in \mathbb{N}$, the $k$-way Cheeger constant is defined as
\begin{equation}\label{Multiway Cheeger constant}
h(k)=\min_{S_1, S_2,\ldots, S_k}\max_{1\leq i\leq k}\phi(S_i),
\end{equation}
where the minimum is taken over all collections of $k$ non-empty, mutually disjoint subsets $S_1, S_2, \ldots, S_k\subseteq V$. We call such kind of $k$ subsets a $k$-subpartition of $V$, following \cite{DJM2012}. Note by definition, we
have the monotonicity $h(k)\leq h(k+1)$. The classical Cheeger inequality asserts that
\begin{equation}\label{classicalCheeger}
 \frac{h(2)^2}{2}\leq\lambda_2\leq 2h(2).
\end{equation}

Resolving a conjecture of Miclo \cite{Miclo2008} (see also \cite{DJM2012}), Lee-Oveis Gharan-Trevisan \cite{LGT2013} prove the following higher-order Cheeger inequality.
\begin{thm}[Lee-Oveis Gharan-Trevisan]\label{TheoremLGT}
For every graph $G$, and each natural number $1\leq k\leq N$, we have
\begin{equation}\label{LGT1}
\frac{\lambda_k}{2}\leq h(k)\leq Ck^2\sqrt{\lambda_k},
\end{equation}
or in another form
\begin{equation}\label{LGT2}
\frac{1}{C^2k^4}h(k)^2\leq\lambda_k\leq 2h(k),
\end{equation}
where $C$ is a universal constant.
\end{thm}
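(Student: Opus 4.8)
The two inequalities in \eqref{LGT1} lie at very different depths, so I would establish them separately.

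\emph{The lower bound $\lambda_k\le 2h(k)$.} This is the elementary half, obtained by exhibiting a good test subspace in the Courant--Fischer characterization
\[
\lambda_k=\min_{\dim U=k}\ \max_{0\neq f\in U}\ \frac{\sum_{u\sim v}w_{uv}\bigl(f(u)-f(v)\bigr)^2}{\sum_{u}d_uf(u)^2}.
\]
Let $S_1,\dots,S_k$ be a $k$-subpartition realizing $h(k)$ and take $U=\mathrm{span}\{\mathbf 1_{S_1},\dots,\mathbf 1_{S_k}\}$, which is $k$-dimensional because disjointly supported functions are linearly independent. For $f=\sum_i a_i\mathbf 1_{S_i}$ I would bound the Dirichlet energy edge by edge: an edge inside some $S_i$ contributes $0$, while an edge joining $S_i$ to $\overline{S_i}$ contributes at most $2a_i^2w_{uv}$ by $(a_i-a_j)^2\le 2a_i^2+2a_j^2$. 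Summing gives $\sum_{u\sim v}w_{uv}(f(u)-f(v))^2\le 2\sum_i a_i^2\phi(S_i)\mathrm{vol}(S_i)\le 2h(k)\sum_i a_i^2\mathrm{vol}(S_i)=2h(k)\sum_u d_uf(u)^2$, so the max--min above is at most $2h(k)$.

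\emph{The upper bound $h(k)\le Ck^2\sqrt{\lambda_k}$.} Here lies the real content, and my plan follows the spectral-embedding-plus-random-partitioning strategy of Lee--Oveis Gharan--Trevisan. Choose orthonormal eigenfunctions $f_1,\dots,f_k$ for $\lambda_1,\dots,\lambda_k$ and form the spectral embedding $F\colon V\to\R^k$, $F(u)=(f_1(u),\dots,f_k(u))$. Orthonormality yields the mass identity $\sum_u d_u\|F(u)\|^2=k$, and the eigenvalue equations yield the energy bound $\sum_{u\sim v}w_{uv}\|F(u)-F(v)\|^2=\sum_{i=1}^k\lambda_i\le k\lambda_k$, so the embedding has ``average Rayleigh quotient'' at most $\lambda_k$. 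I would then (i) discard the vertices on which $\|F(u)\|$ is tiny --- they carry only a negligible fraction of the total mass --- and radially project the remaining images to the unit sphere $S^{k-1}$, tracking the distortion this introduces in the energy; (ii) apply a random partition of $S^{k-1}$ into pieces of controlled diameter and replace the indicator of each piece by a Lipschitz cutoff of the distance to the cell boundary, obtaining nonnegative functions $g_1,g_2,\dots$ with pairwise disjoint supports; (iii) by averaging over the randomness together with a pigeonhole argument, extract $k$ of these functions each with Rayleigh quotient bounded by a fixed polynomial in $k$ times $\lambda_k$ --- concretely $C'k^4\lambda_k$ is what the final bound requires; (iv) Cheeger-round each $g_j$, i.e.\ select a super-level set $S_j\subseteq\supp g_j$ with $\phi(S_j)\le\sqrt{2C'k^4\lambda_k}\le Ck^2\sqrt{\lambda_k}$. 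The $S_j$ are disjoint because the supports $\supp g_j$ are, so $S_1,\dots,S_k$ is a $k$-subpartition and $h(k)\le\max_j\phi(S_j)\le Ck^2\sqrt{\lambda_k}$.

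\emph{Main obstacle.} Steps (i) and (iv) are routine co-area and bookkeeping arguments; the crux is steps (ii)--(iii), namely producing $k$ \emph{disjointly supported} localized functions without letting the energy explode. A naive bounded-diameter partition of $\R^k$ inflates the Dirichlet energy by a factor that degrades with the ambient dimension, so one must either tune the partition scale with care (in the spirit of Lov\'asz--Simonovits and of padded random decompositions of metric spaces) or dichotomize according to whether the spectral embedding is genuinely $k$-dimensional or essentially concentrated on a lower-dimensional subspace, recursing in the second case. Simultaneously controlling this energy blow-up and ensuring that at least $k$ of the cells keep enough mass to survive the rounding in (iv) is exactly where the $\mathrm{poly}(k)$ factor becomes unavoidable, and this is the step I expect to be hardest.
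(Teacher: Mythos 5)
The paper does not actually prove this theorem: it is quoted from Lee--Oveis Gharan--Trevisan \cite{LGT2013}, and what the paper proves is the dual statement (Theorem \ref{TheoremMain}), by adapting exactly the machinery you outline. Your easy half is correct: taking $U=\mathrm{span}\{\mathbf{1}_{S_1},\dots,\mathbf{1}_{S_k}\}$ for an optimal $k$-subpartition (these indicators are $l^2(V,\mu)$-orthogonal, so (\ref{keigenvalue}) applies) and charging each cut edge via $(a_i-a_j)^2\le 2a_i^2+2a_j^2$ gives $\lambda_k\le 2h(k)$; this is the standard argument and runs parallel to the paper's proof of Theorem \ref{thmLowerbound}.

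The gap is in the hard direction, and it is quantitative rather than structural. Your steps (i)--(iv) are the correct LGT skeleton, but the claim in (iii) that the padded-random-partition route produces $k$ disjointly supported functions with Rayleigh quotient $C'k^{4}\lambda_k$ (hence $h(k)\le Ck^{2}\sqrt{\lambda_k}$ after rounding) is not what this route delivers, and nothing in your sketch substantiates it. The spreading property forces a partition scale $r\asymp 1/\sqrt{k}$; a padded random partition of $\mathbb{S}^{k-1}$, whose doubling dimension is of order $k$, has padding parameter $\alpha\asymp k\cdot k=k^{2}$ (as in Theorem \ref{thmPadderandompartition} with $\delta=1/4k$); so the Lipschitz cutoffs live at scale $\epsilon=r/\alpha\asymp k^{-5/2}$ and inflate the energy by $(1+2/\epsilon)^{2}\asymp k^{5}$, on top of the factor $\asymp k$ coming from the mass lower bound $\mathcal{E}_{T_i}\ge \mathcal{E}_V/2k$. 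That yields Rayleigh quotients of order $k^{6}\lambda_k$ and therefore only $h(k)\le Ck^{3}\sqrt{\lambda_k}$ --- precisely the point the paper makes in Section 1.2, where it notes that this route gives order $k^{3}$ in (\ref{LGT1}) and that the $k^{2}$ exponent of \cite{LGT2013} requires their additional techniques (dimension reduction/random projection and a finer analysis), which your sketch does not supply; your closing remark about a dichotomy and recursion gestures at this but is not an argument. So as written you obtain $\lambda_k\le 2h(k)$ and, at best, a weaker $h(k)\le Ck^{3}\sqrt{\lambda_k}$, not the stated bound.
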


Observe that when $k>\frac{N}{2}$, at least one of $k$ disjoint non-empty subsets must contain a single vertex, hence $h(k)=1$. Therefore (\ref{LGT2}) is more useful for the first half part of the spectrum.

We will study the corresponding phenomena for the remaining part of the spectrum. Define the following quantity for a pair of disjoint subsets $V_1, V_2\subseteq V$, for which $V_1\cup V_2\neq \emptyset$,
\begin{equation*}
\overline{\phi}(V_1, V_2)=\frac{2|E(V_1, V_2)|}{\text{vol}(V_1\cup V_2)}.
\end{equation*}
Then, for every $k\in \mathbb{N}$, we can define a $k$-way dual Cheeger constant as follows.
\begin{equation}\label{Multiway dual Cheeger constant}
\overline{h}(k)=\max_{(V_1, V_2),\ldots, (V_{2k-1}, V_{2k})}\min_{1\leq i\leq k}\overline{\phi}(V_{2i-1}, V_{2i}),
\end{equation}
where the maximum is taken over all collections of $k$ pairs of subsets $$(V_1, V_2),(V_3, V_4)\ldots, (V_{2k-1}, V_{2k})$$
which satisfy $$V_p\cap V_q=\emptyset, \,\,\forall\,\, 1\leq p\neq q \leq 2k, V_{2i-1}\cup V_{2i}\neq \emptyset, \,\,\forall\,\, 1\leq i\leq k.$$
For notational simplicity, we will denote the space of all $k$ pairs of subsets described as above by $\text{Pair}(k)$ and call every element of $\text{Pair}(k)$ a $k$-sub-bipartition of $V$. Here we have the monotonicity
$\overline{h}(k)\geq \overline{h}(k+1)$.

Bauer-Jost \cite{BJ13} proved a dual Cheeger inequality
\begin{equation}\label{BJdualCheeger}
 \frac{(1-\overline{h}(1))^2}{2}\leq 2-\lambda_N\leq 2(1-\overline{h}(1)).
\end{equation}

Our main result in this paper is the following higher-order dual Cheeger inequality.
\begin{thm}\label{TheoremMain}
For every graph $G$, and each natural number $1\leq k\leq N$, we have
\begin{equation}
\frac{2-\lambda_{N-k+1}}{2}\leq 1-\overline{h}(k)\leq Ck^3\sqrt{2-\lambda_{N-k+1}},
\end{equation}
or in another form,
\begin{equation}\label{My2}
\frac{1}{C^2k^6}(1-\overline{h}(k))^2\leq 2-\lambda_{N-k+1}\leq 2(1-\overline{h}(k)),
\end{equation}
where $C$ is a universal constant.
\end{thm}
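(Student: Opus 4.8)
The plan is to exploit the standard bipartite-to-ordinary reduction together with the Lee--Oveis~Gharan--Trevisan theorem, but only for the easy direction, and to prove the hard direction directly via a new spectral-clustering argument on real projective space.

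\textbf{The easy (lower) bound.} First I would establish $2-\lambda_{N-k+1}\leq 2(1-\overline{h}(k))$ by a test-function argument. Given a $k$-sub-bipartition $(V_1,V_2),\dots,(V_{2k-1},V_{2k})$ realizing $\overline{h}(k)$, define for each $i$ the function $f_i$ supported on $V_{2i-1}\cup V_{2i}$ that equals $+1$ on $V_{2i-1}$ and $-1$ on $V_{2i}$. These $k$ functions have disjoint supports, hence are orthogonal, and a direct computation of the Rayleigh quotient $\mathcal{R}(f_i)=\frac{\sum_{u\sim v}w_{uv}(f_i(u)-f_i(v))^2}{\sum_u d_u f_i(u)^2}$ gives $\mathcal{R}(f_i)\geq 2-\overline{\phi}(V_{2i-1},V_{2i})\cdot(\text{something})$; more precisely one checks $\mathcal{R}(f_i) = 2 - \overline\phi(V_{2i-1},V_{2i})\cdot\frac{2\,\mathrm{vol}(V_{2i-1}\cup V_{2i})}{?}$ — the cleanest route is to note $\sum_{u\sim v}w_{uv}(f_i(u)-f_i(v))^2 = 2\,\mathrm{vol}(V_{2i-1}\cup V_{2i}) - 4|E(V_{2i-1},V_{2i})|$ plus boundary terms, so $\mathcal R(f_i)\le 2-2\overline\phi(V_{2i-1},V_{2i})/(\text{...})$; I would organize the bookkeeping so that $\min_i \mathcal R(f_i)\le 2(1-\overline h(k))$. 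By the Courant--Fischer min-max principle applied to the top $k$ of the spectrum (i.e. the variational characterization of $\lambda_{N-k+1}$ as a min over $k$-dimensional subspaces of a max of Rayleigh quotients, run ``from the top''), the span of $f_1,\dots,f_k$ witnesses $2-\lambda_{N-k+1}\le \max_i \mathcal R(f_i) \ge$ — one orders it so $\lambda_{N-k+1}\ge \min_i\mathcal R(f_i)$, i.e. $2-\lambda_{N-k+1}\le 2(1-\overline h(k))$.

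\textbf{The hard (upper) bound.} This is the genuinely new content. I expect the main obstacle to be producing $k$ \emph{disjointly supported} near-bipartite pairs from the top $k$ eigenfunctions. The strategy announced in the introduction is to use metrics on real projective space $\mathbb{RP}^{k-1}$: let $f_{N-k+1},\dots,f_N$ be the top $k$ eigenfunctions, package them into a map $F:V\to\mathbb R^k$, and — crucially — consider the induced map into $\mathbb{RP}^{k-1}$ (identifying antipodal points), since a near-bipartite structure is exactly an antipodal-type splitting of the sign pattern. I would first prove a ``projective spectral clustering'' lemma: there exist $k$ functions $g_1,\dots,g_k$ with mutually disjoint supports, each a truncation/localization of a suitable linear combination of the top eigenfunctions, such that each $g_i$ has small ``bipartite Rayleigh quotient'' $2-\mathcal R(g_i)$ controlled by $2-\lambda_{N-k+1}$ up to a $\mathrm{poly}(k)$ factor. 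The mechanism: points of $V$ mapped near a common line in $\mathbb{RP}^{k-1}$ but to opposite ends of that line (opposite signs) form the two sides of a candidate bipartition; a random-projection / random-partition argument à la Lee--Oveis~Gharan--Trevisan, but carried out on $\mathbb{RP}^{k-1}$ with the chordal (or geodesic) metric, produces $k$ such localized functions with disjoint supports while controlling the energy blow-up. Then a one-dimensional rounding step (a ``dual Cheeger'' version of the standard sweep/level-set argument, as in Bauer--Jost's proof of \eqref{BJdualCheeger}) converts each $g_i$ into an actual pair $(V_{2i-1},V_{2i})$ with $1-\overline\phi(V_{2i-1},V_{2i})\le C\sqrt{2-\mathcal R(g_i)}$.

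\textbf{Assembly and the extra power of $k$.} Combining: for each $i$, $1-\overline\phi(V_{2i-1},V_{2i})\le C\sqrt{2-\mathcal R(g_i)}\le C\sqrt{\mathrm{poly}(k)\,(2-\lambda_{N-k+1})}$, and since the pairs are disjointly supported they form a valid element of $\mathrm{Pair}(k)$, giving $1-\overline h(k)\le \min_i(1-\overline\phi)\le Ck^3\sqrt{2-\lambda_{N-k+1}}$; the stated exponent $k^3$ (versus $k^2$ in \eqref{LGT1}) should emerge from the combination of the $\mathbb{RP}^{k-1}$ random-partition loss and the square-root rounding — I would track constants to confirm $k^3$ suffices, and it is plausible that the projective (antipodal) geometry costs one extra factor of $k$ compared to the simplex geometry in the original Cheeger case. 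The dual-Cheeger rounding in the last step and the disjointness-preservation in the projective clustering lemma are the two places where care is needed; everything else is a translation of known techniques from the $h(k)$ setting to the $\overline h(k)$ setting via the substitution $\lambda\leftrightarrow 2-\lambda$ and the sign-pattern/antipodal reinterpretation of bipartiteness.
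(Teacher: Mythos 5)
Your proposal is correct in outline and follows essentially the same route as the paper: the easy bound via disjointly supported $\pm1$ test functions in the top-side min-max principle, and the hard bound via mapping the top $k$ eigenfunctions into $P^{k-1}\mathbb{R}$ with a chordal-type metric, using padded random partitions together with spreading and localization lemmas to produce $k$ disjointly supported maps whose dual Rayleigh quotients $\overline{\mathcal{R}}$ are controlled by $\mathrm{poly}(k)\,(2-\lambda_{N-k+1})$, and then rounding each one by a Bauer--Jost type sweep into a pair $(V_{2i-1},V_{2i})$. The only flaw is the bookkeeping in the easy direction: one should lower-bound $\mathcal{R}(f)$ for \emph{every} $f$ in the span of the $f_i$ (discarding the nonnegative cross-pair edge terms), which gives $\lambda_{N-k+1}\geq 2\overline{h}(k)$ and hence $2-\lambda_{N-k+1}\leq 2(1-\overline{h}(k))$, rather than quoting $\min_i\mathcal{R}(f_i)$ or the mis-stated formula for the numerator; this is a routine fix.
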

This can be considered as a strong quantitative version  of the fact that $\lambda_{N-k+1}=2$ if and only if $G$ has at least $k$ bipartite connected components (see Proposition \ref{pro1} (i)).

Dually, when $k>\frac{N}{2}$, at least one of the subset pairs $\{(V_{2i-1}, V_{2i})\}_{i=1}^k\in \text{Pair}(k)$ has to contain an empty subset, hence $\overline{h}(k)=0$. Therefore (\ref{My2}) is more useful for the second half part of the spectrum.

\subsection {Clustering on real projective spaces}
The lower bound estimate of $2-\lambda_{N-k+1}$ in (\ref{My2}) is the essential part of Theorem \ref{TheoremMain}. For the proof, we will follow the route in Lee-Oveis Gharan-Trevisan \cite{LGT2013} which justifies the spectral clustering algorithms using the bottom $k$ eigenfunctions of \cite{NJW2002}. (Note that by this route, one can also only get an order $k^3$ in (\ref{LGT1}), Lee-Oveis Gharan-Trevisan used other strong techniques to derive $k^2$ for the price of a much larger $C$.) The novel point of our proof is to explore a new type of spectral clustering.

For an orthogonal system of eigenfunctions $f_1, f_2, \ldots, f_k$ of the normalized Laplace operator $\Delta$, one can construct the mapping
\begin{equation}\label{MapF}
 F:V\rightarrow \mathbb{R}^k, v\mapsto (f_1(v), f_2(v), \ldots, f_k(v)).
\end{equation}
For illustration, we ignore those vertices on which $F$ vanishes and consider the induced mapping to a unit sphere,
\begin{equation*}
 \widetilde{F}:V\rightarrow \mathbb{S}^{k-1}, v\mapsto \frac{F(v)}{\|F(v)\|},
\end{equation*}
where $\|\cdot\|$ is the Euclidean norm in $\mathbb{R}^k$. We will also use $\langle\cdot, \cdot\rangle$ for the inner product of vectors in $\mathbb{R}^k$.

The spectral clustering algorithms using the bottom $k$ eigenfunctions aim at obtaining $k$ subsets of $V$ with smaller expansions, i.e. clustering those groups of vertices which are closely connected inside the group and loosely connected with outside vertices. Roughly speaking, \cite{LGT2013} used the sphere distance to cluster vertices in $V$ via their image on the unit sphere under $\widetilde{F}$.

We explore the clustering phenomenon using the top $k$ eigenfunctions $f_{N-k+1}, \ldots, f_N$. Now use these functions in the definition of $F$ given in (\ref{MapF}). We first observe that for any $u\in V$
\begin{equation*}
 \frac{1}{d_u}\sum_{v, v\sim u}\langle F(u), F(v)\rangle w_{uv}=\sum_{j=N-k+1}^N(1-\lambda_j)f_j(u)^2\leq (1-\lambda_{N-k+1})\|F(u)\|^2.
\end{equation*}
Therefore if $\lambda_{N-K+1}>1$ is large, there exists at least one neighbor $v_0$ of $u$ such that $\langle F(u), F(v_0)\rangle<0$. That is, every vertex has always at least one neighbor far away from it under the sphere distance. This indicate that the aim of a proper clustering in this case should be different. In fact, instead of pursuing small expansion subsets, we aim at finding $k$ subsets, each of which has a bipartition such that the quantity $1-\overline{\phi}$ is small. Roughly speaking, we hope to find $k$ subsets whose induced subgraphs are all close to bipartite ones.

Let us explain how real projective spaces come into the situation by the following extremal but inspiring example. Consider a disconnected graph $G$ which has two bipartite connected components.
\begin{figure}[!htb]
\centering
\includegraphics[width=.8\textwidth]{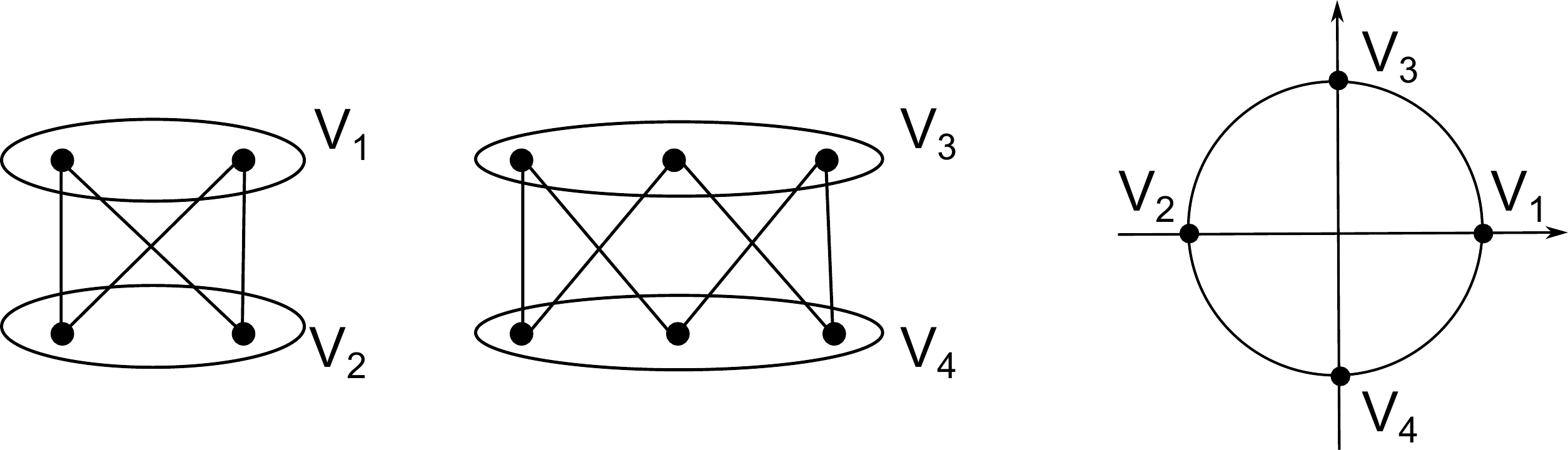}
\caption{The graph $G$ and its embedding into the sphere\label{figure1}}
\end{figure}
Then the embedding of its vertices into the sphere $\mathbb{S}^1$ via its top two eigenfunctions are shown in Figure \ref{figure1}.
If we use the sphere distance, we will obtain two clusters, e.g. $V_1\cup V_3$ and $V_2\cup V_4$. But we actually hope to find the clusters $V_1\cup V_2$ and $V_3\cup V_4$. A solution for this problem is to identify the antipodal points of the sphere and to obtain two clusters $V_1=V_2$ and $V_3=V_4$. Afterwards, we "unfold" each cluster to get two pairs of subsets which we desire. Therefore, we should use the metric on the real projective space instead of the sphere.

To understand the above clustering via top $k$ eigenfunctions more intuitively, we can think of the edges in $E$ as "hostile" relations. Vertices are clustered because they share common enemies. In contrast, the traditional clustering via the bottom $k$ eigenfunctions treat edges as "friendly" relations. We anticipate applications of this kind of hostile spectral clustering methods in practical fields, e.g. the research of social relationship networks. This hostile clustering is technically quite crucial for our purpose of proving Theorem \ref{TheoremMain}, as discussed in Lemmas \ref{lemmaspreading} and \ref{lemmalocalization}.

\subsection{Organization of the paper} In Section \ref{sectionPreliminaries} we collect necessary results from spectral graph theory and random partition theory of doubling metric spaces. Section \ref{sectionRelations} is devoted to various interesting relations between $\overline{h}(k)$ and $h(k)$. We discuss the lower bound estimates of $\lambda_{N-k+1}$ in Section \ref{sectionLowerbound}. In Section \ref{SectionMain} and \ref{sectionClustering} we present the proof of the lower bound estimate of $2-\lambda_{N-k+1}$ of (\ref{My2}). In Section \ref{sectionExamples}, we prove for cycles a slightly "shifted" version of higher-order dual Cheeger inequalities with an absolute constant which is even independent of $k$, based on the results of \cite{DJM2012}. We also analyze the example of unweighted cycles in detail. In Section \ref{Application}, we explore an application of higher-order dual Cheeger inequalities to the essential spectrum of a general reversible Markov operator.

We comment that the results about weighted graphs in this paper (except for Proposition \ref{pro2}) can be extended to graphs permitting self-loops, or in the language of Markov chains, lazy random walks. One just need to be careful about the fact $\mu(u)\geq \sum_{v,v\sim u, v\neq u}w_{uv}$ (see below for $\mu$) in that case.

\section{Preliminaries}\label{sectionPreliminaries}
\subsection{Spectral theory for normalized graph Laplacian}
We assign a natural measure $\mu$ to $V$ that $\mu(u)=d_u$, for every $u\in V$. The inner product of two functions $f,g: V \to \R$ is given by
$$ (f,g)_\mu = \sum_{u \in V} \mu(u)f(u)g(u). $$
We denote $l^2(V,\mu)$ the Hilbert space of functions on $V$ with the above inner product.

The normalized graph Laplacian $\Delta$ is
defined as follows. For any $f\in l^2(V,\mu)$, and $u\in V$
\begin{equation}\label{operator} \Delta f(u) := \frac{1}{d_u} \sum_{v, v \sim u} w_{uv} (f(u)-f(v)). \end{equation}
In matrix form, $\Delta=I-P$, where $I$ is the identity matrix, and $P:=D^{-1}A$, $D^{-1}f(u):=d_u^{-1}f(u)$ and $Af(u):=\sum_{v,v\sim u}w_{uv}f(v)$.

For a map $F: V\rightarrow \mathbb{R}^k$, we denote the Rayleigh quotient of $F$ by
\begin{equation}
\mathcal{R}(F):=\frac{\sum_{e=\{u,v\}\in E}\|F(u)-F(v)\|^2w_{uv}}
  {\sum_{u\in V}\|F(u)\|^2\mu(u)},
\end{equation}
and a dual version of the Rayleigh quotient of $F$
\begin{equation}
\overline{\mathcal{R}}(F):=\frac{\sum_{e=\{u,v\}\in E}\|F(u)+F(v)\|^2w_{uv}}
  {\sum_{u\in V}\|F(u)\|^2\mu(u)}.
\end{equation}
The support of a map $F$ is defined as
\begin{equation*}
\supp(F):=\{v\in V: F(v)\neq 0\}.
\end{equation*}

We call $\lambda$ an eigenvalue of $\Delta$ if there exists some $f\not\equiv 0$ with $\Delta f=\lambda f$.
Let $0=\lambda_1\leq \lambda_2\leq\cdots\leq\lambda_N$ be all the
eigenvalues of $\Delta$.
The Courant-Fischer-Weyl min-max principle tells us
\begin{equation}\label{keigenvalue}
\lambda_k=\min_{\substack{f_1,f_2,\ldots, f_k\not\equiv 0\\(f_i,f_j)_{\mu}=0, \forall i\neq j}}\max_{\substack{f\not\equiv 0\\f\in \text{span}\{f_1, f_2,\ldots, f_k\}}}
\mathcal{R}(f),
\end{equation}
and, dually,
\begin{equation}\label{keigenvalue2}
\lambda_{N-k+1}=\max_{\substack{f_1,f_2,\ldots, f_{k}\not\equiv 0\\(f_i,f_j)_{\mu}=0, \forall i\neq j}}\min_{\substack{f\not\equiv 0\\f\in \text{span}\{f_1, f_2,\ldots, f_{k}\}}}
\mathcal{R}(f).
\end{equation}

The next lemma can be found in Bauer-Jost \cite{BJ13} (Lemma 3.1 there). For its various variants, see e.g. \cite{Chung}, \cite{LGT2013}.
\begin{lemma}\label{CheegerRayleigh}
For any nonnegative function $g$ with $\supp(g)\neq \emptyset$,  there exist a subset $\emptyset\neq S\subseteq \supp(g)$ such that
\begin{equation}
\mathcal{R}(g)\geq 1-\sqrt{1-\phi(S)^2}.
\end{equation}
\end{lemma}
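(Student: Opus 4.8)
The plan is to run the classical threshold--rounding (``sweep'') argument, but with the precise Cauchy--Schwarz pairing that yields the sharp constant $1-\sqrt{1-\phi(S)^2}$ rather than the cruder $\phi(S)^2/2$.

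Write $D:=(g,g)_\mu=\sum_{u\in V}d_ug(u)^2$. First I would record two elementary quadratic identities,
\begin{equation*}
\sum_{e=\{u,v\}\in E}w_{uv}(g(u)-g(v))^2=\mathcal{R}(g)\,D,\qquad
\sum_{e=\{u,v\}\in E}w_{uv}(g(u)+g(v))^2=(2-\mathcal{R}(g))\,D .
\end{equation*}
The first is just the definition of $\mathcal{R}(g)$ combined with $(g,\Delta g)_\mu=\sum_{e}w_{uv}(g(u)-g(v))^2$; the second follows from the first by expanding $(g(u)+g(v))^2=(g(u)-g(v))^2+4g(u)g(v)$ and using $\sum_{e}2w_{uv}g(u)g(v)=(g,Pg)_\mu=(1-\mathcal{R}(g))D$. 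Applying Cauchy--Schwarz to the factorization $|g(u)^2-g(v)^2|=|g(u)-g(v)|\cdot(g(u)+g(v))$ (valid since $g\ge0$) then gives
\begin{equation*}
\sum_{e=\{u,v\}\in E}w_{uv}\,|g(u)^2-g(v)^2|\ \le\ \sqrt{\mathcal{R}(g)D}\cdot\sqrt{(2-\mathcal{R}(g))D}\ =\ D\sqrt{\mathcal{R}(g)(2-\mathcal{R}(g))}.
\end{equation*}
Set $\alpha:=D^{-1}\sum_{e}w_{uv}|g(u)^2-g(v)^2|$, so that $\alpha^2\le\mathcal{R}(g)(2-\mathcal{R}(g))=1-(1-\mathcal{R}(g))^2$.

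Next I would produce a level set of conductance at most $\alpha$. Order $\supp(g)=\{v_1,\dots,v_m\}$ with $g(v_1)\ge g(v_2)\ge\cdots\ge g(v_m)>0$, adjoin a formal symbol $v_{m+1}$ with $g(v_{m+1}):=0$, and put $S_i:=\{v_1,\dots,v_i\}\subseteq\supp(g)$ for $1\le i\le m$. A co-area-type bookkeeping, splitting edges according to whether zero, one, or two endpoints lie in $V\setminus\supp(g)$, yields the telescoping identities
\begin{equation*}
\sum_{i=1}^{m}\big(g(v_i)^2-g(v_{i+1})^2\big)\,\mathrm{vol}(S_i)=D,\qquad
\sum_{i=1}^{m}\big(g(v_i)^2-g(v_{i+1})^2\big)\,|E(S_i,\overline{S_i})|=\sum_{e}w_{uv}|g(u)^2-g(v)^2|=\alpha D .
\end{equation*}
Subtracting $\alpha$ times the first from the second gives $\sum_{i}\big(g(v_i)^2-g(v_{i+1})^2\big)\big(|E(S_i,\overline{S_i})|-\alpha\,\mathrm{vol}(S_i)\big)=0$, where all coefficients $g(v_i)^2-g(v_{i+1})^2$ are nonnegative and the one for $i=m$ equals $g(v_m)^2>0$; hence some index $i$ satisfies $|E(S_i,\overline{S_i})|\le\alpha\,\mathrm{vol}(S_i)$, i.e. $\phi(S_i)\le\alpha$.

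Finally I would combine the two estimates: for $S:=S_i$ we get $\phi(S)^2\le\alpha^2\le1-(1-\mathcal{R}(g))^2$, so $1-\phi(S)^2\ge(1-\mathcal{R}(g))^2\ge0$, and therefore $1-\sqrt{1-\phi(S)^2}\le1-|1-\mathcal{R}(g)|\le\mathcal{R}(g)$ in every case (distinguishing $\mathcal{R}(g)\le1$ from $\mathcal{R}(g)\ge1$); since $S\subseteq\supp(g)$ is nonempty this is exactly the claim. The only mildly delicate point is the clean bookkeeping of the co-area identities in the presence of edges meeting $V\setminus\supp(g)$; everything else is a single application of Cauchy--Schwarz, and the sharpness of the constant is entirely an artifact of pairing $|g(u)-g(v)|$ with $g(u)+g(v)$ (rather than with $\sqrt{g(u)^2+g(v)^2}$), which is what makes the factor $2-\mathcal{R}(g)$ appear.
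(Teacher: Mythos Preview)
Your proof is correct and is precisely the standard Cauchy--Schwarz plus level-set sweep argument; the paper itself does not prove this lemma but simply cites Bauer--Jost \cite{BJ13} (Lemma~3.1) and \cite{Chung}, \cite{LGT2013}, and your argument is exactly the one found in those references. The bookkeeping for edges meeting $V\setminus\supp(g)$ and the final case split on $\mathcal{R}(g)\lessgtr 1$ are handled cleanly.
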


The next lemma is basically contained in the proof of Theorem 3.2 in Bauer-Jost \cite{BJ13}.
\begin{lemma}\label{Dual CheegerRayleigh}
For any function $f$ with $\supp(f)\neq \emptyset$, there exist two subsets $V_1, V_2\subseteq \supp(f)$ such that
$V_1\cap V_2=\emptyset$, $V_1\cup V_2\neq \emptyset$ and
\begin{equation}
\overline{\mathcal{R}}(f)\geq 1-\sqrt{1-(1-\overline{\phi}(V_1, V_2))^2}.
\end{equation}
\end{lemma}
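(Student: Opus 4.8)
The plan is to transfer the statement, via the bipartite double cover of $G$, to an ordinary Cheeger-type inequality for which Lemma~\ref{CheegerRayleigh} is precisely the tool. First note the trivial range: since $\overline{\phi}(V_1,V_2)\le 1$ always, the right-hand side $1-\sqrt{1-(1-\overline{\phi}(V_1,V_2))^2}$ never exceeds $1$, so when $\overline{\mathcal R}(f)\ge 1$ the inequality holds with any admissible pair, e.g. $V_1=\supp(f)$, $V_2=\emptyset$. So I may assume $\overline{\mathcal R}(f)<1$; in that regime $f$ takes a positive value somewhere, since a sign-definite function always has dual Rayleigh quotient at least $1$ (because $(a+b)^2\ge a^2+b^2$ for $a,b\ge 0$).

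Next I would form the double cover $\widehat G$: vertex set $V\times\{1,2\}$, and each $\{u,v\}\in E$ supplies the two edges $\{(u,1),(v,2)\}$ and $\{(u,2),(v,1)\}$ of weight $w_{uv}$. Lift $f$ by $\widehat f(u,1)=f(u)$, $\widehat f(u,2)=-f(u)$; a direct computation (each of the two copies of $\{u,v\}$ contributes $w_{uv}(f(u)+f(v))^2$ to the Dirichlet sum, while the total volume doubles) identifies the Rayleigh quotient of $\widehat f$ on $\widehat G$ with $\overline{\mathcal R}(f)$. Then I pass to $g:=\max(\widehat f,0)\ge 0$ and check $\mathcal R_{\widehat G}(g)\le\overline{\mathcal R}(f)<1$; this is quickest by expanding both Rayleigh quotients edge by edge and using $(f(u)+f(v))^2\ge f(u)^2+f(v)^2$ on the monochromatic edges of $G$ (equivalently, split $\widehat f$ into positive and negative parts, which have disjoint supports and, by the deck symmetry, equal Rayleigh quotients, and apply the mediant inequality $\tfrac{a+b}{c+d}\ge\min(\tfrac ac,\tfrac bd)$).

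Now I apply Lemma~\ref{CheegerRayleigh} to $g$ on $\widehat G$: since $\mathcal R_{\widehat G}(g)<1$, it produces $\emptyset\ne\widehat S\subseteq\supp(g)$ with $\phi_{\widehat G}(\widehat S)\le\sqrt{\mathcal R_{\widehat G}(g)(2-\mathcal R_{\widehat G}(g))}$, and since $0\le\mathcal R_{\widehat G}(g)\le\overline{\mathcal R}(f)\le 1$ and $x\mapsto x(2-x)$ is increasing on $[0,1]$, the right side is at most $\sqrt{\overline{\mathcal R}(f)(2-\overline{\mathcal R}(f))}$. Because $\supp(g)=(\{f>0\}\times\{1\})\cup(\{f<0\}\times\{2\})$, writing $\widehat S=(V_1\times\{1\})\cup(V_2\times\{2\})$ gives $V_1\subseteq\{f>0\}$ and $V_2\subseteq\{f<0\}$, so $V_1,V_2\subseteq\supp(f)$ are disjoint with $V_1\cup V_2\ne\emptyset$. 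The remaining point is the dictionary $\phi_{\widehat G}(\widehat S)=1-\overline{\phi}(V_1,V_2)$: here $\text{vol}_{\widehat G}(\widehat S)=\text{vol}(V_1\cup V_2)$, and a short case check on an edge $\{u,v\}\in E$ shows that it contributes weight $2w_{uv}$ to $|E_{\widehat G}(\widehat S,\overline{\widehat S})|$ when both endpoints land in the same part of $\widehat S$, weight $w_{uv}$ when exactly one endpoint lies in $V_1\cup V_2$, and $0$ otherwise (in particular when one endpoint lies in $V_1$ and the other in $V_2$), so that $|E_{\widehat G}(\widehat S,\overline{\widehat S})|=\text{vol}(V_1\cup V_2)-2|E(V_1,V_2)|$. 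Therefore $1-\overline{\phi}(V_1,V_2)=\phi_{\widehat G}(\widehat S)\le\sqrt{\overline{\mathcal R}(f)(2-\overline{\mathcal R}(f))}=\sqrt{1-(1-\overline{\mathcal R}(f))^2}$, which, since $\overline{\mathcal R}(f)<1$, rearranges to $\overline{\mathcal R}(f)\ge 1-\sqrt{1-(1-\overline{\phi}(V_1,V_2))^2}$.

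I expect the main obstacle to be the monochromatic edges. A signed level-set argument run directly on $G$ would charge every edge internal to $V_1$ (or to $V_2$) \emph{twice} in the numerator of $1-\overline{\phi}$, a surplus that cannot be absorbed into $\sqrt{\overline{\mathcal R}(f)(2-\overline{\mathcal R}(f))}$; lifting to $\widehat G$ cures this precisely because each monochromatic edge of $G$ becomes two edges of $\widehat G$, each counted once, so the sharp coarea/Cauchy--Schwarz estimate built into Lemma~\ref{CheegerRayleigh} goes through with no loss. Apart from that conceptual point, the only slightly technical steps are the inequality $\mathcal R_{\widehat G}(g)\le\overline{\mathcal R}(f)$ and the dictionary $\phi_{\widehat G}(\widehat S)=1-\overline{\phi}(V_1,V_2)$, both short.
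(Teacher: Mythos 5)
Your proof is correct and in substance it is the paper's own argument: the paper likewise unfolds the graph (duplicating the vertices of $P(f)\cup N(f)$ and re-wiring the same-sign edges, following Desai--Rao/Bauer--Jost), passes to the nonnegative function $g=|f|$ with $\overline{\mathcal{R}}(f)\ge \mathcal{R}'(g)$, applies Lemma \ref{CheegerRayleigh}, and translates the resulting set back through exactly the volume/boundary dictionary you verify. Your global bipartite double cover with the antisymmetric lift is just the symmetric packaging of that local duplication; the extra steps you add (the deck-symmetry/mediant comparison $\mathcal{R}_{\widehat G}(g)\le \mathcal{R}_{\widehat G}(\widehat f)$ and the separate case $\overline{\mathcal{R}}(f)\ge 1$) are harmless but unnecessary, since the direct chain $\overline{\mathcal{R}}(f)\ge \mathcal{R}_{\widehat G}(g)\ge 1-\sqrt{1-\phi_{\widehat G}(\widehat S)^2}$ already gives the conclusion without any rearrangement.
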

We remark that here we do not require each of $V_1, V_2$ to be non-empty, but only their union. This lemma is derived from the combination of Lemma \ref{CheegerRayleigh} and a construction in Bauer-Jost \cite{BJ13} (following previous ideas in Desai-Rao \cite{DR1994}). For convenience, we recall the proof here briefly.
\begin{proof}
Denote $P(f):=\{v\in V: f(v)>0\}$, $N(f):=\{v\in V: f(v)<0\}$. By the assumption, $P(f)\cup N(f)\neq \emptyset$. Now construct a new graph $G'=(V', E', w')$ from the original graph $G$ in the following way. Duplicate all the vertices in $P(f)$ and $N(f)$. Denote by $u'$ the new vertices duplicated from $u$. For any edge $\{u,v\}$ such that $u,v\in P(f) \text{ or } u,v \in N(f)$, replace it by two new edges $\{u, v'\}, \{v, u'\}$ with the same weight $w'_{uv'}=w'_{vu'}=w_{uv}$. All the other vertices, edges and weights are unchanged.

Consider the function $g: V'\rightarrow \mathbb{R}$,
\begin{equation*}
g(u)=\left\{
  \begin{array}{ll}
    |f(u)|, & \hbox{if $u\in P(f)\cup N(f)$;} \\
    0, & \hbox{otherwise.}
  \end{array}
\right.
\end{equation*}
Then the above construction convert the inside edges of $P(f), N(f)$ into the boundary edges of $\supp(g)$. Furthermore, one can check that
\begin{equation*}
 \overline{\mathcal{R}}(f)\geq \mathcal{R}'(g).
\end{equation*}
Now by Lemma \ref{CheegerRayleigh}, we know there exists a subset $\emptyset\neq S\subseteq \supp(g)=P(f)\cup N(f)$, such that $\mathcal{R}'(g)\geq 1-\sqrt{1-\phi'(S)^2}$. Denote $S_P=S\cap P(f)$, $S_N=S\cap N(f)$. Then we have $S_P\cap S_N=\emptyset$, $S_P\cup S_N\neq \emptyset$ and
\begin{align*}
\phi'(S)&=\frac{|E'(S, \overline{S})|}{\text{vol}'(S)}=\frac{|E(S_P, S_P)|+|E(S_N, S_N)|+|E(S_P\cup S_N, \overline{S_P\cup S_N})|}{\text{vol}(S_P\cup S_N)}\\
&=1-\overline{\phi}(S_P, S_N),
\end{align*}
where for the last equality we used
\begin{align}
\text{vol}(S_P\cup S_N)=&2|E(S_P, S_N)|+|E(S_P, S_P)|+|E(S_N, S_N)|\notag
\\&+|E(S_P\cup S_N, \overline{S_P\cup S_N})|.\label{decomp of volumen}
\end{align}
This complete the proof of the lemma.
\end{proof}
%

\subsection{Padded random partitions of doubling metric space}
Random partition theory of metric spaces was firstly developed in theoretical computer science. It has found many important applications in pure mathematics, see e.g. \cite{LeeNaor2005}, \cite{KLPT2011}, \cite{LGT2013}. We discuss a result of that in this section which is needed in our arguments later.

We first introduce the concept of doubling metric spaces. There are two kinds of doubling properties: metric doubling and measure doubling.

The metric doubling constant $\rho_X$ of a metric space $(X,d)$ is defined as
\begin{align*}
 \rho_X:=\inf\{c\in \mathbb{N}:\,\,\, &\forall x\in X, r>0, \exists\,\, x_1, \ldots, x_c\in X,\\
  &\text{such that } B(x, r)\subseteq\bigcup_{i=1}^c B\left(x_i, \frac{r}{2}\right)\},
\end{align*}
where $B(x,r)$ is the closed ball in $X$ with center $x$ and radius $r$.
$(X,d)$ is called a metric doubling space if $\rho_X<+\infty$. The metric doubling dimension of $(X,d)$ is then defined as $\dim_d(X):=\log_2\rho_X$.

A Borel measure $\mu$ on $(X,d)$ is called a doubling measure if there exists a number $C_{\mu}$ such that for any $x\in X$, $r>0$,
\begin{equation*}
 0<\mu(B(x,r))\leq C_{\mu}\mu(B(x,\frac{r}{2}))< +\infty.
\end{equation*}
Similarly we call $\dim_{\mu}(X):=\log_2C_{\mu}$ the measure doubling dimension. Note that the measure doubling dimension of $\mathbb{R}^k$ with the standard Euclidean volume measure is exactly $k$.

The two doubling dimensions are related by the following result (see e.g. the Remark on p. 67 of \cite{CoiWei1971}).
\begin{lemma}\label{lemmaTwodimensions}
 If a metric space $(X,d)$ have a doubling measure $\mu$, then
\begin{equation}
 \dim_d(X)\leq 4\dim_{\mu}(X).
\end{equation}
\end{lemma}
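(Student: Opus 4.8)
The plan is to prove the quantitative packing statement $\rho_X\le C_\mu^4$, from which $\dim_d(X)=\log_2\rho_X\le 4\log_2 C_\mu=4\dim_\mu(X)$ is immediate. The mechanism is the classical interplay between covering, separation, and packing in a space carrying a doubling measure.

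First I would fix $x\in X$ and $r>0$ and choose a maximal subset $\{x_1,\ldots,x_m\}\subseteq B(x,r)$ that is $(r/2)$-separated, i.e. $d(x_i,x_j)>r/2$ for $i\neq j$; the estimate below forces $m$ to be finite, so such a maximal set exists. By maximality every $y\in B(x,r)$ lies within distance $r/2$ of some $x_i$, hence $B(x,r)\subseteq\bigcup_{i=1}^m B(x_i,r/2)$. Thus, once we show $m\le C_\mu^4$ for every choice of $x$ and $r$, we conclude $\rho_X\le\lfloor C_\mu^4\rfloor\le C_\mu^4$, which gives the lemma.

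To bound $m$, observe that the separation condition makes the balls $B(x_i,r/4)$ pairwise disjoint. Since each $x_i\in B(x,r)$ we have $B(x_i,r/4)\subseteq B(x,2r)$, so disjointness yields $\sum_{i=1}^m\mu(B(x_i,r/4))\le\mu(B(x,2r))$. Conversely, $d(x,x_i)\le r$ gives $B(x,2r)\subseteq B(x_i,4r)$, and applying the doubling inequality four times along the chain $4r\to 2r\to r\to r/2\to r/4$ yields $\mu(B(x_i,4r))\le C_\mu^4\,\mu(B(x_i,r/4))$. Combining, $\mu(B(x_i,r/4))\ge C_\mu^{-4}\mu(B(x,2r))$ for every $i$; summing over $i$ and comparing with the disjointness bound gives $m\,C_\mu^{-4}\mu(B(x,2r))\le\mu(B(x,2r))$. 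Because $0<\mu(B(x,2r))<\infty$ by the doubling hypothesis, we may cancel this factor and obtain $m\le C_\mu^4$.

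The only care required is routine bookkeeping: the distinction between open and closed balls in the definitions, the precise radii in the chain of four halvings, and the finiteness and positivity of the relevant measures (both guaranteed by the definition of a doubling measure). There is no genuine obstacle; the constant $4$ in the conclusion is exactly the number of halvings needed to pass from $B(x_i,4r)$—which must contain $B(x,2r)$ and hence all the disjoint balls $B(x_i,r/4)$—down to $B(x_i,r/4)$, so this chain is essentially forced by the geometry of the argument.
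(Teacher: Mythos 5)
Your argument is correct: the maximal $(r/2)$-separated set, the disjointness of the balls $B(x_i,r/4)$ inside $B(x,2r)$, the inclusion $B(x,2r)\subseteq B(x_i,4r)$, and the four applications of the doubling inequality together give $m\leq C_\mu^4$, hence $\rho_X\leq C_\mu^4$ and $\dim_d(X)\leq 4\dim_\mu(X)$, and all radii and the use of closed balls are consistent with the definitions in the paper. Note that the paper does not prove this lemma at all but simply cites the remark on p.~67 of Coifman--Weiss; your packing argument is the standard proof underlying that remark, so you have in effect supplied the self-contained argument the paper omits, with the exponent $4$ arising exactly as you describe from the chain of four halvings.
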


A partition of a metric space $(X,d)$ is a series of subsets $P=\{S_i\}_{i=1}^{m}$ for some number $m$, where $S_i\cap S_j=\emptyset$, for any $i\neq j$ and $X=\bigcup_{i=1}^mS_i$. A partition can also be considered as a map $P: X\rightarrow 2^X$, such that $P(x)$ is the unique set in $\{S_i\}_{i=1}^m$ that contains $x$. A random partition $\mathcal{P}$ of $X$ is a distribution $\nu$ over the space of partitions of $X$. The following padded random partition theorem is a slightly modification of Theorem 3.2 in Gupta-Krauthgamer-Lee \cite{GKL2003}, (see also Lemma 3.11 in \cite{LeeNaor2005}).
\begin{thm}\label{thmPadderandompartition}
 Let $(X,d)$ be a finite metric subspace of $(Y, d)$. Then for every $r>0$, $\delta\in (0,1)$ there exists a random partition $\mathcal{P}$, i.e. a distribution $\nu$ over all possible partitions of $X$, such that
\begin{itemize}
  \item $\diam(S)\leq r$, for any $S$ in every partition $P$ in the support of $\nu$;
  \item $\mathbb{P}_{\nu}[B_d(x, \frac{r}{\alpha})\subseteq \mathcal{P}(x)]\geq 1-\delta$ for all $x$, where $\alpha=\frac{32\dim_d(Y)}{\delta}$.
\end{itemize}
\end{thm}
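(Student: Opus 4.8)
The plan is to run the Gupta--Krauthgamer--Lee random clustering scheme, arranged so that the partition of the finite set $X$ is built around a \emph{net}; this is exactly what lets the ambient doubling dimension control everything. Fix $r>0$ and let $\mathcal{N}\subseteq X$ be a net at scale $r/4$ (a maximal $(r/4)$-separated subset; finite since $X$ is). Draw two independent random objects: a radius $R$ chosen uniformly at random from $[r/4,r/2]$, and a uniformly random ordering $p_1,p_2,\dots$ of $\mathcal{N}$. Declare $\mathcal{P}$ to send each $v\in X$ to the cluster of the first $p_i$, in this random order, with $d(v,p_i)\le R$ (discarding net points whose cluster turns out empty). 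Every $v$ is assigned because $\mathcal{N}$ is $(r/4)$-dense and $r/4\le R$, and each cluster sits inside a ball of radius $R\le r/2$, hence has diameter $\le r$; this gives the first bullet.

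For the padding bound, fix $x\in X$ and set $\rho=r/\alpha$. The point is that $B_d(x,\rho)\subseteq\mathcal{P}(x)$ \emph{unless} the ball is ``cut'', meaning: among the net points within distance $R+\rho$ of $x$, the one occurring first in the random order lies in the shell $\{p\in\mathcal{N}:\,R-\rho<d(x,p)\le R+\rho\}$. Indeed, if that first in-reach net point $p$ instead satisfies $d(x,p)\le R-\rho$, then $B_d(x,\rho)\subseteq B_d(p,R)$ and, being first, $p$ swallows all of $B_d(x,\rho)$ into one cluster, necessarily $\mathcal{P}(x)$. Writing $N(s):=|\{p\in\mathcal{N}:d(x,p)\le s\}|$ and using exchangeability of the ordering,
\[
\mathbb{P}\big[\,B_d(x,\rho)\text{ cut}\mid R\,\big]=\frac{N(R+\rho)-N(R-\rho)}{N(R+\rho)},
\]
the denominator being $\ge 1$ since $\mathcal{N}$ meets $B_d(x,r/4)\subseteq B_d(x,R)$.

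The decisive step is to average this over $R$ using the doubling property of $(Y,d)$. The only net points that can ever be in reach lie within $r/2+\rho\le r$ of $x$; they are $(r/4)$-separated and contained in $B_d(x,r)$, so iterating the doubling inequality of $Y$ a fixed number of times bounds their number by $m:=2^{O(\dim_d(Y))}$. Hence $N(R\pm\rho)\in\{1,\dots,m\}$ for the relevant $R$ (and $N(R-\rho)=0$ only on an $R$-set of measure $O(\rho)$, where one just bounds the integrand by $1$). Using $1-\theta\le\ln(1/\theta)$ and integrating $\ln N(R+\rho)-\ln N(R-\rho)$ against the uniform density $4/r$ on $[r/4,r/2]$ — the integral telescopes, the two shifted copies overlapping except on two intervals of length $2\rho$ — one obtains
\[
\mathbb{P}\big[\,B_d(x,\rho)\text{ cut}\,\big]\ \le\ \frac{8\rho}{r}\,\ln m\ +\ O\!\Big(\frac{\rho}{r}\Big)\ =\ \frac{\rho}{r}\cdot O(\dim_d(Y)).
\]
Taking $\rho=r/\alpha$ with $\alpha=32\dim_d(Y)/\delta$, after tracking the absolute constants, makes the right side at most $\delta$, which is the second bullet.

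The step I expect to be the genuine obstacle is exactly this last one, for two reasons. First, the count must be routed through the net, not through $X$ itself: $X$ may have arbitrarily many points inside $B_d(x,r)$ even when $\dim_d(Y)$ is small, so there is no dimension-only bound without the net. Second, one must keep the harmonic weighting above (the factor $1/N(R+\rho)$) rather than apply a naive union bound over the $m$ relevant centres; a union bound would only yield an $\alpha$ that is \emph{exponential} in $\dim_d(Y)$, whereas the weighted estimate makes it linear. Everything else is bookkeeping: this is Theorem 3.2 of Gupta--Krauthgamer--Lee \cite{GKL2003} (see also Lemma 3.11 of \cite{LeeNaor2005}), and the only ``slight modification'' is that the doubling used in the counting is that of the ambient space $Y$ (rather than of $X$ itself), which is precisely the setting of our application, together with reading off the explicit constant $32$.
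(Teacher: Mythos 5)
Your proposal is correct and is essentially the paper's argument: the paper proves this theorem simply by citing Theorem 3.2 of \cite{GKL2003} (and Lemma 3.11 of \cite{LeeNaor2005}) and observing that the only place the doubling dimension enters is in bounding the number of $\epsilon$-separated points in a ball, a count that can just as well be performed in the ambient space $(Y,d)$ — which is precisely the CKR/GKL random-radius, random-ordering scheme you reconstruct and precisely the ``slight modification'' you isolate at the end. Your constant bookkeeping also checks out (the net points in reach are $(r/4)$-separated in a ball of radius $\leq r$, giving at most $2^{3\dim_d(Y)}$ of them, and the resulting cut probability is below $\delta$ for $\alpha=32\dim_d(Y)/\delta$), so nothing further is needed.
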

The random partition obtained in the above theorem is called a $(r, \alpha, 1-\delta)$-padded random partition in \cite{LGT2013}.
\begin{proof}
 We refer the readers to \cite{GKL2003} for the proof of this theorem. But we comment here that one can replace the $\dim_d(X)$ in Theorem 3.2 of \cite{GKL2003} by $\dim_d(Y)$ as we do here in the conclusion. The reason is that the only point where the metric doubling dimension plays a role in the proof is the following fact (this is more clear in \cite{LeeNaor2005}). Let $Z\subseteq X$ be a subset in which each pair of distinct points has a distance at least $\epsilon$. Then the cardinality of $B_d(x,t)\cap Z$ is less or equal to $2^{\dim_d(X)\left\lceil\log_2\frac{2t}{\epsilon}\right\rceil}$, for any $x\in X$ and radius $t\geq \epsilon$. Surely one can estimate the same cardinality in the bigger space $Y$.
\end{proof}

\section{Relations between $h(k)$ and its dual $\overline{h}(k)$}\label{sectionRelations}
In this section, we explore some interesting relations between the multi-way Cheeger and dual Cheeger constants. The following two propositions can be considered as strong extensions of Theorem 3.1 and Proposition 3.1 in Bauer-Jost \cite{BJ13}.
\begin{pro}\label{pro1}
 Let $G$ be any graph, then for each $1\leq k\leq N$ we have
\begin{equation}\label{connectionwithDual}
\overline{h}(k)\leq 1-h(k).
\end{equation}
Moreover, we have

(\textrm{i}) $\overline{h}(k)=1$ if and only if $G$ has at least $k$ connected components, each of which is bipartite.

(\textrm{ii}) If $G$ is bipartite, then $h(k)+\overline{h}(k)=1$, for each $1\leq k\leq N$.

(\textrm{iii}) If $h(k)+\overline{h}(k)=1$, then for the $k$-sub-bipartition $\{(V_{2i-1}, V_{2i})\}_{i=1}^k$ which assumes $\overline{h}(k)$, the pair $(V_{2i_0-1}, V_{2i_0})$ with the maximal expansion, i.e. $\phi(V_{2i_0-1}\cup V_{2i_0})=\max_{1\leq i\leq k}\phi(V_{2i-1}\cup V_{2i})$, is bipartite in the sense of $$|E(V_{2i_0-1}, V_{2i_0-1})|=|E(V_{2i_0}, V_{2i_0})|=0.$$
\end{pro}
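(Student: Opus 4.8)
The plan is to reduce all four assertions to the volume decomposition \eqref{decomp of volumen}. First I would record that for disjoint $V_1,V_2$ with $S:=V_1\cup V_2$ of positive volume, dividing \eqref{decomp of volumen} by $\text{vol}(S)$ yields
\[\overline{\phi}(V_1,V_2)+\phi(S)=1-\frac{|E(V_1,V_1)|+|E(V_2,V_2)|}{\text{vol}(S)}\leq 1,\]
with equality precisely when $|E(V_1,V_1)|=|E(V_2,V_2)|=0$, i.e.\ when the subgraph induced on $S$ is bipartite with the bipartition $(V_1,V_2)$; in particular $\overline{\phi}\le 1$ always. Call this relation $(\ast)$. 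Everything else is bookkeeping with $(\ast)$ together with the defining min-max formulas for $h(k)$ and $\overline{h}(k)$.

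For the inequality $\overline{h}(k)\le 1-h(k)$, I would pick a $k$-sub-bipartition $\{(V_{2i-1},V_{2i})\}_{i=1}^k\in\text{Pair}(k)$ attaining $\overline{h}(k)$ (the maximum is attained since $\text{Pair}(k)$ is finite), set $S_i=V_{2i-1}\cup V_{2i}$, observe that $\{S_i\}_{i=1}^k$ is a $k$-subpartition, and conclude via $(\ast)$ that
\[\overline{h}(k)=\min_{1\le i\le k}\overline{\phi}(V_{2i-1},V_{2i})\le\min_{1\le i\le k}(1-\phi(S_i))=1-\max_{1\le i\le k}\phi(S_i)\le 1-h(k),\]
the last step because $\max_i\phi(S_i)\ge h(k)$ by the definition of $h(k)$.

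For (i): the direction $(\Leftarrow)$ is immediate, since if $C_1,\dots,C_k$ are $k$ bipartite connected components with bipartitions $(A_j,B_j)$, then $(\ast)$ gives $\overline{\phi}(A_j,B_j)=1$ (both interior-edge terms vanish by bipartiteness, the boundary term vanishes because $C_j$ is a component), whence $\overline{h}(k)=1$. For $(\Rightarrow)$: an attaining sub-bipartition has every $\overline{\phi}(V_{2i-1},V_{2i})=1$; then $(\ast)$ forces $|E(V_{2i-1},V_{2i-1})|=|E(V_{2i},V_{2i})|=0$ and $|E(S_i,\overline{S_i})|=0$, so each $S_i$ is a nonempty union of connected components whose induced subgraph is bipartite (hence each component it contains is bipartite), and the $S_i$ are pairwise disjoint, giving at least $k$ bipartite connected components. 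For (ii): "$\le$" is already done, and for "$\ge$" I would take a $k$-subpartition $\{S_i\}$ attaining $h(k)$, fix a bipartition $V=A\sqcup B$, and set $V_{2i-1}=S_i\cap A$, $V_{2i}=S_i\cap B$; bipartiteness of $G$ kills the interior-edge terms, so $(\ast)$ is an equality giving $\overline{\phi}(V_{2i-1},V_{2i})=1-\phi(S_i)$, hence $\overline{h}(k)\ge 1-h(k)$. Finally, for (iii), with $\{(V_{2i-1},V_{2i})\}$ attaining $\overline{h}(k)$, $S_i=V_{2i-1}\cup V_{2i}$, and $i_0$ chosen so that $\phi(S_{i_0})=\max_i\phi(S_i)$, one has $\phi(S_{i_0})\ge h(k)$ since $\{S_i\}$ is a $k$-subpartition, and therefore
\[1-\phi(S_{i_0})\le 1-h(k)=\overline{h}(k)\le\overline{\phi}(V_{2i_0-1},V_{2i_0})\le 1-\phi(S_{i_0}),\]
using the hypothesis, then $\overline{h}(k)=\min_i\overline{\phi}\le\overline{\phi}(V_{2i_0-1},V_{2i_0})$, then $(\ast)$. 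All terms being equal, $(\ast)$ holds with equality at $i_0$, which is exactly $|E(V_{2i_0-1},V_{2i_0-1})|=|E(V_{2i_0},V_{2i_0})|=0$.

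I do not expect a serious obstacle here: the content is entirely in the sharp equality case of $(\ast)$ and in keeping the min-max quantifiers straight. The two points that need a line of care are: making the equality statement in $(\ast)$ precise enough to power both $(\Rightarrow)$ of (i) and the conclusion of (iii); and ruling out the degenerate configurations (an empty half of a pair, or a zero-volume union), which cause no trouble because $\overline{\phi}$ is defined --- and the relevant maxima taken --- only over configurations of positive volume, and $\overline{\phi}=1$ already forbids an empty half.
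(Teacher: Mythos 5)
Your proposal is correct and follows essentially the same route as the paper: your relation $(\ast)$ is just the volume decomposition (\ref{decomp of volumen}) divided by $\mathrm{vol}(S)$, and the four assertions are then handled by the same equality-case and min--max bookkeeping the paper uses (your direct comparison at an attaining $k$-sub-bipartition is only a cosmetic rearrangement of the paper's double-minimum computation for $1-\overline{h}(k)$). Your treatment of (i)$(\Rightarrow)$, noting that each $S_i$ is a union of bipartite components rather than a single component, is if anything slightly more careful than the paper's wording.
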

\begin{proof}
By definition of $h(k)$, $\overline{h}(k)$ and the formula (\ref{decomp of volumen}), we have
\begin{align}
&1-\overline{h}(k)\notag\\
&=\min_{\{(V_{2i-1}, V_{2i})\}_{i=1}^k\in \text{Pair}(k)}\max_{1\leq i\leq k}\frac{\text{vol}(V_{2i-1}\cup V_{2i})-2|E(V_{2i-1}, V_{2i})|}{\text{vol}(V_{2i-1}\cup V_{2i})}\notag\\
&=\min_{\substack{\{S_i\}_{i=1}^k\\S_i\neq\emptyset, \forall i \\S_i\cap S_j=\emptyset, \forall i\neq j}}\min_{\substack{\{(V_{2i-1}, V_{2i})\}_{i=1}^k\\V_{2i-1}\cup V_{2i}=S_i\\V_{2i-1}\cap V_{2i}=\emptyset, \forall i}}\max_{1\leq i\leq k}\frac{|E(S_i, \overline{S_i})|+|E(V_{2i}, V_{2i})|+|E(V_{2i-1}, V_{2i-1})|}{\text{vol}(S_i)}\notag\\
&\geq \min_{\substack{\{S_i\}_{i=1}^k\\S_i\neq\emptyset, \forall i\\ S_i\cap S_j=\emptyset, \forall i\neq j}}\max_{1\leq i\leq k}\frac{|E(S_i, \overline{S_i})|}{\text{vol}(S_i)}=h(k).\label{abovecalculation}
\end{align}
Observe further in the above calculation, that the equality in (\ref{abovecalculation}) can be achieved when the graph $G$ is bipartite. Since then for each $S_i$, we can always find a bipartition $S_i=V_{2i-1}\cup V_{2i}$ such that $|E(V_{2i-1}, V_{2i-1})|=|E(V_{2i}, V_{2i})|=0$. This actually proves (\text{ii}).

For (\textrm{i}), if $\overline{h}(k)=1$, then there exists  $\{(V_{2i-1}, V_{2i})\}_{i=1}^k\in \text{Pair}(k)$ such that
\begin{equation*}
 \min_{1\leq i\leq k} \overline{\phi}(V_{2i-1}, V_{2i})=1.
\end{equation*}
Hence for each $i$,
\begin{equation*}
 1=\frac{2|E(V_{2i-1}, V_{2i})|}{\text{vol}(V_{2i-1}\cup V_{2i})}.
\end{equation*}
Then recalling (\ref{decomp of volumen}), we obtain
\begin{equation*}
 0=|E(V_{2i}, V_{2i})|=|E(V_{2i-1}, V_{2i-1})|=|E(V_{2i-1}\cup V_{2i-1}, \overline{V_{2i-1}\cup V_{2i-1}})|,
\end{equation*}
which implies $\{V_{2i-1}\cup V_{2i}\}_{i=1}^k$ are $k$ connected components, each of which is bipartite.
Conversely, if we know that $G$ has  $k$ connected components, each of which is bipartite, we can choose the $k$-sub-bipartition to be the bipartitions of those $k$ components. Then by definition, we have
\begin{equation*}
 \overline{h}(k)\geq \min_{1\leq i\leq k}\overline{\phi}(V_{2i-1}, V_{2i})=1.
\end{equation*}
Together with (\ref{connectionwithDual}), we know $ \overline{h}(k)=1$.

For (\textrm{iii}), suppose $\{(V_{2i-1}, V_{2i})\}_{i=1}^k\in \text{Pair}(k)$ assumes $\overline{h}(k)$, then by definition,
\begin{equation*}
 1=\overline{h}(k)+h(k)\leq \min_{1\leq i\leq k}\overline{\phi}(V_{2i-1}, V_{2i})+\max_{i=1}^k\phi(V_{2i-1}\cup V_{2i}).
\end{equation*}
Let $V_{2i_0-1}\cup V_{2i_0}$ attain the maximum in the above inequality. Then we have
\begin{align*}
 1&=\overline{h}(k)+h(k)\leq \overline{\phi}(V_{2i_0-1}, V_{2i_0})+\phi(V_{2i_0-1}\cup V_{2i_0})\\
&=\frac{2|E(V_{2i_0-1},V_{2i_0})|+|E(V_{2i-1}\cup V_{2i-1}, \overline{V_{2i-1}\cup V_{2i-1}})|}{\text{vol}(V_{2i_0-1}\cup V_{2i_0})}\leq 1.
\end{align*}
This implies $0=|E(V_{2i_0-1}, V_{2i_0-1})|=|E(V_{2i_0}, V_{2i_0})|$.
\end{proof}

\begin{remark}\label{rmkDualityBipartite}
 The property (\textrm{ii}) above shows the duality between $h(k)$ and $\overline{h}(k)$. Recalling the fact that bipartiteness is equivalent to whenever $\lambda$ is an eigenvalue, so is $2-\lambda$ (see e.g. Lemma 1.8 in \cite{Chung}), and employing property (\textrm{i}) above, we conclude
\begin{equation}\label{NiceDuality}
G \text{ is bipartite} \Leftrightarrow \lambda_k+\lambda_{N-k+1}=2, \,\forall\,k \Leftrightarrow h(k)+\overline{h}(k)=1, \,\forall\,  k.
\end{equation}

 We note that the equality $h(k)+\overline{h}(k)=1$ only for certain $k$ does not imply that $G$ is bipartite. For example, we have trivially for any graph with $N$ vertices, when $k>\frac{N}{2}$, we have
$$h(k)=1, \,\,\,\, \overline{h}(k)=0,$$
i.e. $h(k)+\overline{h}(k)=1$. We also have the following example.
\begin{example}
 Consider the unweighted (i.e. every edge has a weight $1$) complete graph $K_{2n}$ with $2n$ vertices. By choosing $n$ disjoint edges, it is not hard to check that
$$h(n)=\frac{2n-2}{2n-1}, \,\,\,\, \overline{h}(n)=\frac{1}{2n-1}.$$
Therefore we have $h(n)+\overline{h}(n)=1$.
\end{example}
In fact even when $h(k)+\overline{h}(k)=1$ for $2\leq k\leq N$, the graph still can be non-bipartite. An example is an odd cycle (see Proposition \ref{proOddcycle}). However, this graph is already very close to a bipartite graph.
\end{remark}
By (\ref{NiceDuality}), it is immediately to see that for bipartite graphs the classical Cheeger inequality (\ref{classicalCheeger}) and Theorem \ref{TheoremLGT} are equivalent to the following dual estimates.
\begin{coro}
Let $G$ be any bipartite graph. Then

(\textrm{i}) $2-\lambda_{N-1}\geq \frac{(1-\overline{h}(2))^2}{2}$.

(\textrm{ii}) $2-\lambda_{N-k+1}\geq \frac{1}{C^2k^4}(1-\overline{h}(k))^2$ for each natural number $1\leq k\leq N$, where $C$ is a universal number.
\end{coro}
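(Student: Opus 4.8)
The plan is to observe that, on a bipartite graph, both sides of each asserted inequality are literally the two sides of the corresponding (higher-order) Cheeger inequality, rewritten through the duality $(\ref{NiceDuality})$. First I would record the spectral symmetry: since $G$ is bipartite, whenever $\lambda$ is an eigenvalue of $\Delta$ so is $2-\lambda$, and the map $f\mapsto \widehat f$, where $\widehat f$ flips the sign of $f$ on one of the two colour classes, is a linear isomorphism from the $\lambda$-eigenspace onto the $(2-\lambda)$-eigenspace. Hence the eigenvalues pair up \emph{with multiplicity}, which forces the $k$-th smallest eigenvalue to be matched with the $k$-th largest, i.e. $\lambda_{N-k+1}=2-\lambda_k$, equivalently $2-\lambda_{N-k+1}=\lambda_k$, for every $1\le k\le N$. (Alternatively, this is exactly the content of the middle equivalence in Remark \ref{rmkDualityBipartite}.)

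Next I would invoke Proposition \ref{pro1}\,(ii), which for bipartite $G$ gives $h(k)+\overline{h}(k)=1$, hence $1-\overline{h}(k)=h(k)$ for every $k$. Combining the two identities, the quantity $2-\lambda_{N-k+1}$ equals $\lambda_k$ and $1-\overline{h}(k)$ equals $h(k)$, so every lower bound for $\lambda_k$ in terms of $h(k)$ transfers verbatim to a lower bound for $2-\lambda_{N-k+1}$ in terms of $1-\overline{h}(k)$.

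For part (i) I would take $k=2$ and apply the classical Cheeger inequality $(\ref{classicalCheeger})$, $\tfrac{h(2)^2}{2}\le\lambda_2$; substituting $h(2)=1-\overline{h}(2)$ and $\lambda_2=2-\lambda_{N-1}$ yields $\tfrac{(1-\overline{h}(2))^2}{2}\le 2-\lambda_{N-1}$. For part (ii) I would apply the left-hand inequality of $(\ref{LGT2})$ from Theorem \ref{TheoremLGT}, $\tfrac{1}{C^2k^4}h(k)^2\le\lambda_k$, and make the same two substitutions to obtain $\tfrac{1}{C^2k^4}(1-\overline{h}(k))^2\le 2-\lambda_{N-k+1}$, with the same universal constant $C$.

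There is no serious obstacle here; the only point requiring a little care is the identity $\lambda_{N-k+1}=2-\lambda_k$, where one must check that the sign-flip involution preserves multiplicities (so that the ordered spectrum, not just the underlying set of eigenvalues, is reversed about $1$), rather than merely mapping the set of eigenvalues to itself. Everything else is a direct substitution using $(\ref{NiceDuality})$ and Proposition \ref{pro1}.
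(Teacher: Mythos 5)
Your proposal is correct and matches the paper's own argument: the corollary is obtained exactly by combining the bipartite duality (\ref{NiceDuality}) (i.e. $2-\lambda_{N-k+1}=\lambda_k$ and $1-\overline{h}(k)=h(k)$, the latter being Proposition \ref{pro1}(ii)) with the classical Cheeger inequality (\ref{classicalCheeger}) for (i) and with Theorem \ref{TheoremLGT} for (ii). Your extra remark that the sign-flip involution preserves eigenspace dimensions, so the ordered spectrum is genuinely reversed about $1$, is a valid justification of the multiplicity point the paper delegates to Chung's Lemma 1.8, not a different route.
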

It is interesting to note that the fact dual to Bauer-Jost's dual Cheeger inequality (\ref{BJdualCheeger}) is not the inequality (\ref{classicalCheeger}) but the identity $\lambda_1=0$ for bipartite graphs.

\begin{pro}\label{pro2}
 For any graph $G$, we have for each $1\leq k\leq N$
\begin{equation}
 \overline{h}(k)\geq \frac{1}{2}(1-h(k)).
\end{equation}
\end{pro}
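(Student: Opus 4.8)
The plan is to feed an optimal $k$-subpartition for $h(k)$ through a max-cut construction on each of its pieces, and then verify that the resulting $k$-sub-bipartition already forces $\overline{h}(k)\ge\tfrac12(1-h(k))$.

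Concretely, I would first fix a $k$-subpartition $\{S_i\}_{i=1}^k$ realizing $h(k)$, so that $\phi(S_i)\le h(k)$ for every $i$. The one external input is the classical max-cut bound: for any subset $S\subseteq V$ there is a splitting $S=A\sqcup B$ with $|E(A,B)|\ge\tfrac14|E(S,S)|$. This is immediate from a uniformly random two-coloring of $S$: each ordered pair $(u,v)$ of distinct vertices of $S$ stays monochromatic with probability $\tfrac12$, so $\mathbb{E}\big[|E(A,A)|+|E(B,B)|\big]=\tfrac12|E(S,S)|$, hence $\mathbb{E}\big[2|E(A,B)|\big]=\tfrac12|E(S,S)|$ and some coloring does at least as well. (The factor $\tfrac14$, rather than $\tfrac12$, is precisely the price of the convention that $|E(S,S)|$ counts each internal edge twice.) Applying this inside each $S_i$ produces bipartitions $S_i=V_{2i-1}\sqcup V_{2i}$.

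Then I would just compute. Since $V_{2i-1}\cup V_{2i}=S_i$, we have $\text{vol}(V_{2i-1}\cup V_{2i})=\text{vol}(S_i)$, and from $\text{vol}(S_i)=|E(S_i,S_i)|+|E(S_i,\overline{S_i})|$ together with $|E(S_i,\overline{S_i})|=\phi(S_i)\,\text{vol}(S_i)$ we get $|E(S_i,S_i)|=(1-\phi(S_i))\,\text{vol}(S_i)$. Hence
$$\overline{\phi}(V_{2i-1},V_{2i})=\frac{2|E(V_{2i-1},V_{2i})|}{\text{vol}(S_i)}\ge\frac{|E(S_i,S_i)|}{2\,\text{vol}(S_i)}=\frac{1-\phi(S_i)}{2}\ge\frac{1-h(k)}{2}.$$
It remains to note that $\{(V_{2i-1},V_{2i})\}_{i=1}^k\in\text{Pair}(k)$: all $2k$ sets $V_{2i-1},V_{2i}\subseteq S_i$ are pairwise disjoint because the $S_i$ are, and each union $V_{2i-1}\cup V_{2i}=S_i$ is nonempty. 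Therefore $\overline{h}(k)\ge\min_{1\le i\le k}\overline{\phi}(V_{2i-1},V_{2i})\ge\tfrac12(1-h(k))$, as desired.

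I do not expect a genuine obstacle here: the argument is short once the max-cut bound is invoked. The only things to watch are the constant-tracking caused by the double-counting in $|E(\cdot,\cdot)|$ (already flagged above), and the degenerate range $k>N/2$, where some $S_i$ is a singleton — then $|E(S_i,S_i)|=0$ and one of the paired sets is empty, but simultaneously $h(k)=1$, so the claimed bound collapses to the trivial $0\ge0$ and nothing breaks.
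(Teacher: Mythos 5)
Your proposal is correct and follows essentially the same route as the paper: fix an optimal $k$-subpartition and bipartition each piece $S_i$ so that the cut captures at least $\tfrac14|E(S_i,S_i)|$, then compute $\overline{\phi}$ using $\mathrm{vol}(S_i)=|E(S_i,S_i)|+|E(S_i,\overline{S_i})|$. The only difference is cosmetic: you obtain the cut bound by a random two-coloring, whereas the paper invokes the Bauer--Jost lemma $|E(V_1,V_2)|\geq\max\{|E(V_1,V_1)|,|E(V_2,V_2)|\}$, which is used only through the equivalent consequence $2|E(V_1,V_2)|\geq|E(V_1,V_1)|+|E(V_2,V_2)|$, i.e.\ exactly your max-cut inequality.
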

To prove this proposition, we need the following lemma of Bauer-Jost \cite{BJ13} (see also Theorem 4.2 in \cite{BHJ2012}).
\begin{lemma}\label{lemmaBauerJost}
 For any subset $S\subseteq V$, there exists a partition $S=V_1\cup V_2$ such that
\begin{equation}
 |E(V_1, V_2)|\geq\max\{|E(V_1, V_1)|, |E(V_2, V_2)|\}.
\end{equation}
\end{lemma}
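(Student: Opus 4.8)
The plan is to realize the desired bipartition of $S$ as a maximum-weight cut of the subgraph of $G$ induced on $S$, and then to read off the two inequalities from the first-order (local-search) optimality condition. Since $S$ is finite there are only finitely many ordered pairs $(V_1, V_2)$ with $V_1 \cap V_2 = \emptyset$ and $V_1 \cup V_2 = S$, so I may fix one maximizing $|E(V_1, V_2)| = \sum_{u \in V_1,\, v \in V_2} w_{uv}$. If the induced subgraph on $S$ carries no edges at all (in particular if $S$ is a single vertex), the statement is trivial with, say, $V_2 = \emptyset$; in any case the argument below applies to all cases.

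Next I would exploit maximality one vertex at a time. Fix $u \in V_1$ and set $a_u := \sum_{v \in V_1,\, v \sim u} w_{uv}$ and $b_u := \sum_{v \in V_2,\, v \sim u} w_{uv}$, the total weights of edges from $u$ into $V_1$ and into $V_2$ respectively (there is no self-loop contribution, since $G$ has no self-loops). Relocating $u$ from $V_1$ to $V_2$ produces another partition of $S$ whose cut weight equals exactly $|E(V_1, V_2)| + a_u - b_u$, because the $a_u$-edges, formerly internal to $V_1$, become cut edges, while the $b_u$-edges, formerly cut edges, become internal to $V_2$. Maximality of the original cut forces $a_u \le b_u$ for every $u \in V_1$. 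Summing over $u \in V_1$ and recalling, by the convention $|E(A,B)| = \sum_{u \in A,\, v \in B} w_{uv}$, that $\sum_{u \in V_1} a_u = \sum_{u \in V_1}\sum_{v \in V_1} w_{uv} = |E(V_1, V_1)|$ and $\sum_{u \in V_1} b_u = |E(V_1, V_2)|$, we get $|E(V_1, V_1)| \le |E(V_1, V_2)|$. The identical argument applied to the vertices of $V_2$ gives $|E(V_2, V_2)| \le |E(V_1, V_2)|$, and together these are precisely the assertion that $|E(V_1, V_2)| \ge \max\{|E(V_1, V_1)|, |E(V_2, V_2)|\}$.

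I do not expect a genuine obstacle: this is the classical local-search bound for max-cut adapted to weighted graphs. The two points deserving care are the exact bookkeeping of how the cut weight changes when a single vertex is moved across, and the observation that $|E(\cdot,\cdot)|$ on a diagonal block already counts each internal edge with multiplicity two in this paper's normalization, which is exactly why no spurious factor of $\tfrac12$ appears and why the bound comes out with equality-type tightness rather than slack. The degenerate case of one side empty is harmless. One could alternatively phrase the proof as a direct greedy procedure — start from any partition of $S$ and, as long as some vertex can be moved to the other side so as to strictly increase $|E(V_1, V_2)|$, do so — which terminates by finiteness at a partition satisfying the same local inequalities $a_u \le b_u$.
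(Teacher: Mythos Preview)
Your argument is correct: choosing a maximum-weight cut of the induced subgraph on $S$ and reading off the vertex-wise optimality condition $a_u \le b_u$ is exactly the standard local-search proof, and your bookkeeping with the paper's convention $|E(A,B)| = \sum_{u\in A,\,v\in B} w_{uv}$ (so that $|E(V_1,V_1)|$ already double-counts internal edges) is handled properly. Note that the paper itself does not prove this lemma but simply cites Bauer--Jost \cite{BJ13} (see also \cite{BHJ2012}); the argument there is precisely this max-cut reasoning, so your approach coincides with the intended one.
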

\begin{proof}[Proof of Proposition \ref{pro2}]
For any $k$-subpartition $S_1, S_2, \ldots, S_k$ of $V$, by Lemma \ref{lemmaBauerJost}, for each $1\leq i\leq k$ we have a partition $S_i=V_{2i-1}\cup V_{2i}$,
such that
\begin{equation}\label{ineqFromBJlemma}
 |E(V_{2i-1}, V_{2i})|\geq\max\{|E(V_{2i-1}, V_{2i-1})|, |E(V_{2i}, V_{2i})|\}.
\end{equation}
By definition, we know that
\begin{align*}
 &\overline{h}(k)\geq\min_{1\leq i\leq k}\overline{\phi}(V_{2i-1}, V_{2i})\\
&=\min_{1\leq i\leq k}\left(\frac{2|E(V_{2i-1}, V_{2i})|+\frac{1}{2}|E(V_{2i-1}\cup V_{2i},\overline{V_{2i-1}\cup V_{2i}})|}{\text{vol}(V_{2i-1}\cup V_{2i})}-\frac{1}{2}\phi(V_{2i-1}\cup V_{2i})\right).
\end{align*}
Combining (\ref{decomp of volumen}) and (\ref{ineqFromBJlemma}), we arrive at
\begin{equation*}
 \text{vol}(V_{2i-1}\cup V_{2i})\leq 4|E(V_{2i-1}, V_{2i})|+|E(V_{2i-1}\cup V_{2i},\overline{V_{2i-1}\cup V_{2i}})|.
\end{equation*}
Therefore, we obtain
\begin{equation*}
 \overline{h}(k)\geq \min_{1\leq i\leq k}\left(\frac{1}{2}-\frac{1}{2}\phi(S_i)\right)=\frac{1}{2}\left(1-\max_{1\leq i\leq k}\phi(S_i)\right).
\end{equation*}
Since $S_1, S_2, \ldots, S_k$ are chosen arbitrarily, we obtain
\begin{equation*}
  \overline{h}(k)\geq \frac{1}{2}(1-h(k)).
\end{equation*}
\end{proof}

\section{Lower bound estimate of $\lambda_{N-k+1}$}\label{sectionLowerbound}
In this section, we prove the lower bound estimate of $\lambda_{N-k+1}$. For any $k$-sub-bipartition $\{(V_{2i-1}, V_{2i})\}_{i=1}^k$, let us denote $V^*:=V\setminus \bigcup_{i=1}^k(V_{2i-1}\cup V_{2i})$.
Then for every $k\in \mathbb{N}$, we define a new constant which is greater or equal to $\overline{h}(k)$, namely
\begin{equation*}\label{UpperMultiway dual Cheeger constant}
\overline{h}^*(k)=\max_{\substack{\{(V_{2i-1}, V_{2i})\}_{i=1}^k\\\in \text{Pair}(k)}}\min_{1\leq i\leq k}\frac{2|E(V_{2i-1}, V_{2i})|+\frac{1}{2}| E(V_{2i-1}\cup V_{2i}, V^*)|}{\text{vol}(V_{2i-1}\cup V_{2i})}.
\end{equation*}

We prove the following result.
\begin{thm}\label{thmLowerbound}
 For any graph $G$ and each $1\leq k\leq N$, we have
\begin{equation}
 \lambda_{N-k+1}\geq 2\overline{h}^*(k).
\end{equation}
\end{thm}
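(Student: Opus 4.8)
The plan is to read off the lower bound from the dual Courant--Fischer--Weyl principle \eqref{keigenvalue2}: it suffices to produce $k$ mutually $\mu$-orthogonal functions $f_1,\dots,f_k\not\equiv 0$ with the property that $\mathcal{R}(f)\geq 2\overline{h}^*(k)$ for every nonzero $f\in\mathrm{span}\{f_1,\dots,f_k\}$, for then $\lambda_{N-k+1}\geq\min_{f\in\mathrm{span}\{f_1,\dots,f_k\}}\mathcal{R}(f)\geq 2\overline{h}^*(k)$. So everything reduces to a good choice of test functions adapted to a near-optimal $k$-sub-bipartition.

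Since $V$ is finite, the maximum defining $\overline{h}^*(k)$ is attained; I would fix a $k$-sub-bipartition $\{(V_{2i-1},V_{2i})\}_{i=1}^k\in\text{Pair}(k)$ attaining it, and keep $V^*:=V\setminus\bigcup_{i=1}^k(V_{2i-1}\cup V_{2i})$ as in the statement. Set $f_i:=\mathbf{1}_{V_{2i-1}}-\mathbf{1}_{V_{2i}}$ and write $S_i:=V_{2i-1}\cup V_{2i}$. The disjointness $V_p\cap V_q=\emptyset$ for $p\neq q$ forces the supports $\supp(f_i)=S_i$ to be pairwise disjoint, which simultaneously yields $(f_i,f_j)_\mu=0$ for $i\neq j$ and the linear independence of the $f_i$; the hypothesis $S_i\neq\emptyset$ gives $f_i\not\equiv 0$.

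Next I would compute $\mathcal{R}(f)$ for $f=\sum_{i=1}^k c_if_i$ with not all $c_i$ vanishing. Because the supports are disjoint, $f(u)^2=c_i^2$ for $u\in S_i$ and $f(u)=0$ for $u\in V^*$, so the denominator is $\sum_{u\in V}f(u)^2\mu(u)=\sum_{i=1}^k c_i^2\,\text{vol}(S_i)$. For the numerator I would classify each edge $\{u,v\}\in E$ by which of $V^*,S_1,\dots,S_k$ contains its endpoints: an edge inside $V^*$, inside $V_{2i-1}$, or inside $V_{2i}$ contributes $0$ since its endpoints carry equal $f$-values; an edge between $V_{2i-1}$ and $V_{2i}$ contributes $(2c_i)^2w_{uv}=4c_i^2w_{uv}$; an edge from $S_i$ to $V^*$ contributes $c_i^2w_{uv}$; and an edge joining $S_i$ to $S_j$ with $i\neq j$ contributes $(\pm c_i\mp c_j)^2w_{uv}\geq 0$. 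Discarding this last nonnegative contribution, the numerator is at least $\sum_{i=1}^k c_i^2\bigl(4|E(V_{2i-1},V_{2i})|+|E(S_i,V^*)|\bigr)$.

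Finally, the optimality of the chosen sub-bipartition gives, for every $i$, $\dfrac{2|E(V_{2i-1},V_{2i})|+\tfrac12|E(S_i,V^*)|}{\text{vol}(S_i)}\geq\overline{h}^*(k)$, that is $4|E(V_{2i-1},V_{2i})|+|E(S_i,V^*)|\geq 2\overline{h}^*(k)\,\text{vol}(S_i)$; multiplying by $c_i^2$ and summing shows the numerator of $\mathcal{R}(f)$ is at least $2\overline{h}^*(k)\sum_i c_i^2\,\text{vol}(S_i)$, i.e. $2\overline{h}^*(k)$ times its denominator. Hence $\mathcal{R}(f)\geq 2\overline{h}^*(k)$ on the whole span, and the claim follows from \eqref{keigenvalue2}. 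There is no serious obstacle here; the only points that require care are the edge bookkeeping — in particular that inter-block edges, which are invisible to $\overline{h}^*(k)$ because they lie outside $E(S_i,V^*)$, contribute nonnegatively and therefore only help — and the observation that the weights $4$ and $1$ produced by the test function $\mathbf{1}_{V_{2i-1}}-\mathbf{1}_{V_{2i}}$ are exactly twice the coefficients $2$ and $\tfrac12$ appearing in the definition of $\overline{h}^*(k)$, which is what makes the constant $2$ come out sharp.
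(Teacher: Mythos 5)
Your proposal is correct and follows essentially the same route as the paper: the test functions $\mathbf{1}_{V_{2i-1}}-\mathbf{1}_{V_{2i}}$, the min-max principle (\ref{keigenvalue2}), and the same edge-by-edge bookkeeping with the inter-block contributions discarded as nonnegative. The only cosmetic difference is that you fix a maximizing $k$-sub-bipartition at the outset (legitimate, since $\mathrm{Pair}(k)$ is finite), whereas the paper argues for an arbitrary sub-bipartition and takes the supremum at the end.
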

Observe that the right hand side of (\ref{My2}) is an immediate corollary of this result.

\begin{proof}
 Given any $\{(V_{2i-1}, V_{2i})\}_{i=1}^k\in \text{Pair}(k)$, we choose $l^2(V, \mu)$-orthogonal functions as follows.
\begin{equation*}
 f_i(v)=\left\{
          \begin{array}{ll}
            1, & \hbox{if $v\in V_{2i-1}$;} \\
            -1, & \hbox{if $v\in V_{2i}$;} \\
            0, & \hbox{otherwise,}
          \end{array}
        \right.i=1,2,\ldots, k.
\end{equation*}
By construction, we know that every $f_i$ is not identically $0$. Then (\ref{keigenvalue2}) tells us that
\begin{equation}\label{eigenlow}
 \lambda_{N-k+1}\geq \min_{a_1, a_2, \ldots,a_k}\left\{\left.\frac{\sum_{e=(u,v)\in E}(f(u)-f(v))^2w_{uv}}{\sum_{u\in V}f(u)^2\mu(u)}\right| f=\sum_{i=1}^ka_if_i\right\},
\end{equation}
where the maximum is taken over all collections of $k$ constants, at least one of which is non-zero.
It is straightforward to see
\begin{equation}
\sum_{u\in V}f(u)^2\mu(u)=\sum_{u\in V}\sum_{i=1}^ka_i^2f_i(u)^2\mu(u)=\sum_{i=1}^ka_i^2\text{vol}(V_{2i-1}\cup V_{2i}).
\end{equation}
For the numerator of the quotient in (\ref{eigenlow}), we have
\begin{align*}
 &\sum_{e=(u,v)\in E}w_{uv}(f(u)-f(v))^2=\frac{1}{2}\sum_{u\in V}\sum_{v\in V}w_{uv}(f(u)-f(v))^2\\
=&\frac{1}{2}\sum_{i=1}^k\left(\sum_{u\in V_{2i-1}}\sum_{v\in V_{2i}}+\sum_{u\in V_{2i}}\sum_{v\in V_{2i-1}}\right)w_{uv}4a_i^2\\
&+\frac{1}{2}\sum_{i=1}^k\left(\sum_{u\in V_{2i-1}\cup V_{2i}}\sum_{v\in V^*}+\sum_{u\in V^*}\sum_{v\in V_{2i-1}\cup V_{2i}}\right)w_{uv}a_i^2\\
&+\frac{1}{2}\sum_{i=1}^k\sum_{u\in V_{2i-1}\cup V_{2i}}\sum_{l=1, l\neq i}^k\sum_{v\in V_{2l-1}\cup V_{2l}}w_{uv}(f(u)-f(v))^2\\
\geq&\sum_{i=1}^ka_i^2\left(4|E(V_{2i-1}, V_{2i})|+|E(V_{2i}\cup V_{2i-1}, V^*)|\right).
\end{align*}
Then we obtain
\begin{align*}
 \lambda_{N-k+1}&\geq\min_{a_1,a_2,\ldots,a_k}\frac{\sum_{i=1}^ka_i^2\left(4|E(V_{2i-1}, V_{2i})|+|E(V_{2i-1}\cup V_{2i}, V^*)|\right)}{\sum_{i=1}^ka_i^2\text{vol}(V_{2i-1}\cup V_{2i})}\\
&\geq 2\min_{1\leq i\leq k}\frac{2|E(V_{2i-1}, V_{2i})|+\frac{1}{2}|E(V_{2i-1}\cup V_{2i}, V^*)|}{\text{vol}(V_{2i-1}\cup V_{2i})}.
\end{align*}
Since $\{(V_{2i-1}, V_{2i})\}_{i=1}^k\in \text{Pair}(k)$ are chosen arbitrarily, we obtain
\begin{equation*}
 \lambda_{N-k+1}\geq 2\overline{h}^*(k).
\end{equation*}
\end{proof}

\section{The metric for clustering via top $k$ eigenfunctions}\label{SectionMain}
In this section, we start to prove the lower bound estimate of $2-\lambda_{N-k+1}$ in (\ref{My2}). Recall that the max-min problem in (\ref{keigenvalue2}) is solved by the corresponding eigenfunctions. Hence for the top $k$ eigenfunctions $f_{N+k-1}, \ldots, f_N$, we have
\begin{equation*}
 \lambda_{N-k+1}=\mathcal{R}(f_{N-k+1})=\min_{N-k+1\leq j\leq N}\mathcal{R}(f_j).
\end{equation*}
Therefore
\begin{equation*}
 \lambda_{N-k+1}\leq\frac{\sum_{j=N-k+1}^N\sum_{e=\{u,v\}\in E}(f_j(u)-f_j(v))^2w_{uv}}{\sum_{j=N-k+1}^N\sum_{u\in V}f_j(u)^2\mu(u)}.
\end{equation*}
Then it is straightforward to calculate
\begin{align}
 2-\lambda_{N-k+1}\geq & \frac{\sum_{j=N-k+1}^N\sum_{e=\{u,v\}\in E}(f_j(u)+f_j(v))^2w_{uv}}{\sum_{j=N-k+1}^N\sum_{u\in V}f_j(u)^2\mu(u)}\notag\\
=&\frac{\sum_{e=\{u,v\}\in E}\|F(u)+F(v)\|^2w_{uv}}{\sum_{u\in V}\|F(u)\|^2\mu(u)}=\overline{\mathcal{R}}(F),\label{2minusEigen}
\end{align}
where $F$ is the map from $V$ to $\mathbb{R}^k$ defined by $F(v)=(f_{N-k+1}(v), \ldots, f_N(v))$, i.e. in the way of (\ref{MapF}). One can also obtain the fact (\ref{2minusEigen}) by applying the min-max principle directly to the operator $I+P$.

Following the route in \cite{LGT2013} for dealing with Rayleigh quotient of the bottom $k$ eigenfunctions, we will localize $F$ to be $k$ disjointly supported maps $\{\Psi_i\}_{i=1}^k$ for which $\overline{\mathcal{R}}(\Psi_i)$ can be controlled from above by $\overline{\mathcal{R}}(F)$. Afterwards, we will apply Lemma \ref{Dual CheegerRayleigh} to handle each $\overline{\mathcal{R}}(\Psi_i)$ further. More explicitly, our requirements for the localization are for each $i$
\begin{itemize}
  \item $\sum_{u\in V}\|\Psi_i\|^2\mu(u)$ can be bounded from below by a certain fraction of $\sum_{u\in V}\|F(u)\|^2\mu(u)$;
  \item $\|\Psi_i(u)+\Psi_i(v)\|$ can be controlled from above by $\|F(u)+F(v)\|$.
\end{itemize}
The first requirement will be realized by the theory of random partitions on doubling metric spaces combined with the crucial Lemma \ref{lemmaspreading} below. The second requirement is solved by Lemma \ref{lemmalocalization} below. Before all those arguments, we first need to introduce our new metric.

\subsection{Real projective space with a rough metric}
We can use the standard Riemannian metric on real projective spaces inherited from the spheres via the canonical antipodal projection
$$Pr: \mathbb{S}^{k-1}\rightarrow P^{k-1}\mathbb{R}, \{x, -x\}\mapsto [x].$$
But for ease of calculations, we adopt a rough metric. That is, for any $[x],[y]\in P^{k-1}\mathbb{R}$, we define
\begin{equation}
  \overline{d}([x],[y]):=\min\{\|x+y\|, \|x-y\|\},
\end{equation}
where $\|\cdot\|$ is the Euclidean norm of vectors in $\mathbb{S}^{k-1}\subset \mathbb{R}^k$. It is easy to check that $\overline{d}$ is a metric on $P^{k-1}\mathbb{R}$.
\begin{pro}\label{proMetric}For the metric space $(P^{k-1}\mathbb{R}, \overline{d})$, we have

(\textrm{i}) $\diam (P^{k-1}\mathbb{R}, \overline{d})= \sqrt{2}$;

(\textrm{ii}) $\dim_{\overline{d}}(P^{k-1}\mathbb{R})\leq  4\left(\log_2\pi-\frac{1}{2}\right)(k-1)$.
\end{pro}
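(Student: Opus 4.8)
The plan is to prove the two parts of Proposition \ref{proMetric} more or less independently, with part (ii) being the substantial one.

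For part (i), I would first note that $\overline{d}([x],[y]) \le \|x-y\| \le 2$ always, but the true diameter comes from the worst case. Fix $x,y$ unit vectors and write $t = \langle x,y\rangle \in [-1,1]$. Then $\|x-y\|^2 = 2-2t$ and $\|x+y\|^2 = 2+2t$, so $\overline{d}([x],[y])^2 = 2 - 2|t|$. This is maximized when $t = 0$, giving $\overline{d} = \sqrt{2}$, and this value is actually attained (take any two orthogonal unit vectors, e.g. $e_1, e_2$, which certainly exist since $k \ge 2$; the case $k=1$ is trivial as $P^0\mathbb{R}$ is a point). Hence $\diam(P^{k-1}\mathbb{R},\overline{d}) = \sqrt{2}$. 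This also records the clean formula $\overline{d}([x],[y]) = \sqrt{2-2|\langle x,y\rangle|}$ which I expect to reuse in part (ii).

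For part (ii), the goal is to bound the metric doubling dimension $\dim_{\overline{d}}(P^{k-1}\mathbb{R}) = \log_2 \rho$, where $\rho$ is the metric doubling constant. By Lemma \ref{lemmaTwodimensions}, it suffices to exhibit a doubling measure on $(P^{k-1}\mathbb{R}, \overline{d})$ and bound its measure doubling dimension, since $\dim_{\overline{d}}(X) \le 4\dim_\mu(X)$. The natural candidate is the pushforward under $Pr$ of the uniform (rotation-invariant) probability measure on $\mathbb{S}^{k-1}$; call it $\mu$. The key point is to estimate $\mu(B_{\overline{d}}([x],r))$. Using the formula from part (i), the ball $B_{\overline{d}}([x],r)$ corresponds under $Pr^{-1}$ to $\{y \in \mathbb{S}^{k-1} : |\langle x,y\rangle| \ge 1 - r^2/2\}$, i.e. the union of two antipodal spherical caps of a fixed angular radius $\theta(r)$ with $\cos\theta(r) = 1 - r^2/2$. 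So $\mu(B_{\overline{d}}([x],r))$ is, up to the factor-2 coming from the double cover, the normalized surface area of a cap of angular radius $\theta(r)$. Now I need the ratio of cap areas for radii $r$ and $r/2$. Since $\cos\theta(r) = 1 - r^2/2$ means $\theta(r) \approx r$ for small $r$ (more precisely $2\sin(\theta(r)/2) = r$), halving $r$ roughly halves the angular radius, and the surface area of a cap of small angular radius $\theta$ in $\mathbb{S}^{k-1}$ scales like $\theta^{k-1}$; for caps up to the hemisphere the comparison $\mathrm{area}(\theta) \le (\text{something})^{k-1}\,\mathrm{area}(\theta/2)$ must be made uniform in $\theta$. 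I would make this precise by the standard estimate that the cap of angular radius $\theta$ sits inside a Euclidean ball in $\mathbb{R}^k$ of radius $2\sin(\theta/2) = r$ (intersected with the sphere) and contains the sphere-intersection of a Euclidean ball of radius $\sin\theta \asymp r$, reducing the measure-doubling estimate to Euclidean volume doubling — which contributes the dimension $k-1$ effectively (the ambient projective space is $(k-1)$-dimensional) — with the constant $\pi$ entering through the comparison $\theta \le \pi\sin(\theta/2)$ valid on $[0,\pi]$ (or an equivalent sharp inequality relating arc-length and chord-length on a half-circle). Chasing the constants, one gets $C_\mu \le (\pi/\sqrt 2)^{k-1} \cdot$ (bounded factor), whence $\dim_\mu(P^{k-1}\mathbb{R}) = \log_2 C_\mu \le (\log_2\pi - \tfrac12)(k-1)$, and Lemma \ref{lemmaTwodimensions} gives the claimed factor-$4$ bound.

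The main obstacle I anticipate is making the cap-area comparison genuinely uniform in the angular radius $\theta \in (0,\pi/2]$ (equivalently $r \in (0,\sqrt2\,]$) rather than just asymptotically as $\theta \to 0$, and extracting exactly the constant $\pi$ (not merely "some constant") so that the stated bound $(\log_2\pi - \tfrac12)(k-1)$ comes out — this forces one to use the sharp inequality $\theta \le \pi \sin(\theta/2)$ on $[0,\pi]$ and to track the $\sqrt2$ coming from the rough metric $\min\{\|x+y\|,\|x-y\|\}$ carefully through the chain. A secondary care point is the behaviour near the "boundary" of the metric space where $r$ approaches the diameter $\sqrt2$, i.e. where the two antipodal caps nearly fill the sphere and the ball $B_{\overline d}([x],r)$ is the whole space; there the doubling inequality is easy but the cap formula degenerates, so the estimate should be split into the regime $r \le 1$ (say) where the Euclidean comparison is clean and $r > 1$ where one can bound $\mu(B(\cdot, r/2))$ below by a fixed positive constant. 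Everything else is routine computation with spherical caps and I would not spell it out in detail.
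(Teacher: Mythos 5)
Part (i) of your proposal is fine, and your formula $\overline{d}([x],[y])=\sqrt{2-2|\langle x,y\rangle|}$ is exactly the right starting point. For part (ii) your overall strategy coincides with the paper's — take the natural rotation-invariant measure on $P^{k-1}\mathbb{R}$ (your pushforward measure is just the normalized Riemannian volume), bound its measure-doubling constant, and invoke Lemma \ref{lemmaTwodimensions} — but the one step that actually produces the stated constant is missing, and as sketched it would not come out. Your chain ends with ``$C_\mu\le(\pi/\sqrt2)^{k-1}\cdot(\text{bounded factor})$, whence $\dim_\mu\le(\log_2\pi-\frac12)(k-1)$''; that implication fails unless the bounded factor is at most $1$, since an extra factor $B$ adds $\log_2 B$ to $\dim_\mu$ and hence $4\log_2 B$ to the final bound, which is then no longer the claimed inequality. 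Likewise your treatment of the regime $r>1$ rests on bounding $\mu(B_{\overline{d}}(\cdot,r/2))$ below by ``a fixed positive constant''; no such bound independent of $k$ exists, because the measure of a cap of fixed chordal radius decays exponentially in $k$, so in that regime you would again have to verify that the resulting base is at most $\pi/\sqrt2$, which your sketch does not do. You correctly identify this uniformity-and-exact-constant issue as the main obstacle, but you do not resolve it.

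The missing idea, which is the paper's actual device, is to compare $\overline{d}$ with the Riemannian distance and then use a volume comparison theorem instead of Euclidean cap estimates. From your formula, $\overline{d}=2\sin(d_{\mathrm{Rie}}/2)$ with $d_{\mathrm{Rie}}\in[0,\pi/2]$, hence $\frac{2\sqrt2}{\pi}\,d_{\mathrm{Rie}}\le\overline{d}\le d_{\mathrm{Rie}}$. Consequently $B_{\overline{d}}(x,r)\subseteq B_{d_{\mathrm{Rie}}}(x,\frac{\pi}{2\sqrt2}r)$ and $B_{d_{\mathrm{Rie}}}(x,\frac{r}{2})\subseteq B_{\overline{d}}(x,\frac{r}{2})$, and the Bishop--Gromov inequality on the constant-curvature-one space $P^{k-1}\mathbb{R}$ (equivalently, the elementary fact that $\theta\mapsto\theta^{-(k-1)}\int_0^\theta\sin^{k-2}t\,dt$ is non-increasing) gives $\mu_{\mathrm{Rie}}(B_{d_{\mathrm{Rie}}}(x,R))/\mu_{\mathrm{Rie}}(B_{d_{\mathrm{Rie}}}(x,\rho))\le (R/\rho)^{k-1}$ for all radii, with no spurious factor and no separate treatment of radii near the diameter. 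Taking $R=\frac{\pi}{2\sqrt2}r$ and $\rho=\frac{r}{2}$ yields $C_\mu\le(\pi/\sqrt2)^{k-1}$ exactly, and Lemma \ref{lemmaTwodimensions} then gives the stated bound. Without this (or an equivalent sharp, radius-uniform cap-ratio estimate), your part (ii) remains incomplete.
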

\begin{proof}
Let us denote the distance function deduced from the standard Riemannian metric on $P^{k-1}\mathbb{R}$ by $d_{\text{Rie}}$, and the Riemannian volume measure by $\mu_{\text{Rie}}$.
(\textrm{i}) is easy. (Compare the fact that $\diam(P^{k-1}\mathbb{R},d_{\text{Rie}})=\frac{\pi}{2}$). One can further observe that
\begin{equation}\label{LipschitzEquiv}
\frac{2\sqrt{2}}{\pi}d_{\text{Rie}}\leq \overline{d}\leq d_{\text{Rie}}.
\end{equation}

Since $(P^{k-1}\mathbb{R},d_{\text{Rie}})$ has constant sectional curvature $1$ (find more geometric properties of projective spaces in \cite{GHLRieGeo}), by the Bishop-Gromov comparison theorem,
\begin{equation*}
 \frac{\mu_{\text{Rie}}(B_{\overline{d}}(x,r))}{\mu_{\text{Rie}}(B_{\overline{d}}(x,\frac{r}{2}))}\leq \frac{\mu_{\text{Rie}}(B_{d_{\text{Rie}}}(x,\frac{\pi}{2\sqrt{2}}r))}{\mu_{\text{Rie}}(B_{d_{\text{Rie}}}(x,\frac{r}{2}))}\leq \left(\frac{\pi}{\sqrt{2}}\right)^{k-1}.
\end{equation*}
Furthermore, recalling Lemma \ref{lemmaTwodimensions}, this implies that
\begin{equation*}
\dim_{\overline{d}}(P^{k-1}\mathbb{R})\leq 4\log_2\left(\frac{\pi}{\sqrt{2}}\right)^{k-1}=4\left(\log_2\pi-\frac{1}{2}\right)(k-1).
\end{equation*}
\end{proof}

Consider the vertex set $V$ of a graph $G$, and a nontrivial map $F: V\rightarrow \mathbb{R}^k$. We write $\widetilde{V}_F:=\supp{F}$ for convenience. Then we define a map to the real projective space,
\begin{equation}
Pr\circ \widetilde{F}: \widetilde{V}_F\rightarrow P^{k-1}\mathbb{R}, v\mapsto Pr\left(\frac{F(v)}{\|F(v)\|}\right).
\end{equation}

Via the metric $\overline{d}$ defined above, we obtain a non-negative symmetric function on $\widetilde{V}_F\times \widetilde{V}_F$
\begin{align*}
  \overline{d}_F(u,v)&:=\overline{d}(Pr\circ \widetilde{F}(u),Pr\circ \widetilde{F}( v))\\
&=\min\left\{\left\|\frac{F(u)}{\|F(u)\|}-\frac{F(v)}{\|F(v)\|}\right\|, \left\|\frac{F(u)}{\|F(u)\|}+\frac{F(v)}{\|F(v)\|}\right\|\right\},
\end{align*}
which satisfies the triangle inequality on $\widetilde{V}_F$. That is, we obtain a pseudo metric space $(\widetilde{V}_F, \overline{d}_F)$.

\subsection{Spreading lemma}
We prove the following spreading lemma for the new metric extending Lemma 3.2 in Lee-Oveis Gahran-Trevisan \cite{LGT2013}.
For any map $F: S\subseteq V\rightarrow \mathbb{R}^k$,
let us call the quantity
$$\sum_{u\in S}\mu(u)\|F(u)\|^2$$
the $l^2$ mass of $F$ on $S$, denoted by $\mathcal{E}_S$ for short. By spreading, we mean that the $l^2$ mass of $F$ distributes evenly on $\widetilde{V}_F$.
\begin{lemma}\label{lemmaspreading}
Let $F$ be the map constructed from $l^2(V,\mu)$-orthonormal functions $f_1, f_2, \ldots, f_k$ in (\ref{MapF}). If $S\subseteq V$ satisfies $\diam(S\cap \widetilde{V}_F, \overline{d}_F)\leq r$, for some $0<r<1$,, then we have $$\mathcal{E}_S\leq \frac{1}{k(1-r^2)}\mathcal{E}_V.$$
\end{lemma}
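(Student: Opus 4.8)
The plan is to exploit that $f_1,\dots,f_k$ are $l^2(V,\mu)$-\emph{orthonormal}, so that $\mathcal{E}_V = \sum_{u\in V}\mu(u)\|F(u)\|^2 = \sum_{j=1}^k(f_j,f_j)_\mu = k$. Thus the claimed inequality is equivalent to $\mathcal{E}_S \le \frac{1}{1-r^2}$, i.e. we must show that a set $S$ whose image under $Pr\circ\widetilde F$ has $\overline d$-diameter at most $r$ cannot carry more than $\frac{1}{1-r^2}$ units of $l^2$-mass. The natural device is a quadratic form: pick a ``center'' direction and bound $\mathcal{E}_S$ by comparing $F(u)$ to its projection onto that direction.

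\textbf{Key steps.} First, fix any $u_0\in S\cap\widetilde V_F$ and set $e_0 := F(u_0)/\|F(u_0)\|\in\mathbb{S}^{k-1}$; this represents $[e_0]\in P^{k-1}\mathbb R$. For every other $u\in S\cap\widetilde V_F$, the diameter hypothesis gives $\overline d_F(u_0,u)\le r$, which by definition of $\overline d$ means $\min\{\|\widetilde F(u)-e_0\|,\|\widetilde F(u)+e_0\|\}\le r$; squaring and using $\|\widetilde F(u)\|=\|e_0\|=1$ yields $|\langle \widetilde F(u), e_0\rangle| \ge 1-\tfrac{r^2}{2} =: c$, hence $\langle F(u), e_0\rangle^2 \ge c^2\|F(u)\|^2$. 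Second, I would introduce the function $g := (f_1e_0^{(1)} + \cdots + f_k e_0^{(k)})$, that is $g(u) = \langle F(u), e_0\rangle$, where $e_0^{(j)}$ are the coordinates of $e_0$; since $\sum_j (e_0^{(j)})^2 = 1$ and the $f_j$ are orthonormal, $g$ has unit $l^2(V,\mu)$-norm, i.e. $\sum_{u\in V}\mu(u)g(u)^2 = 1$. Third, combine the two facts:
\[
1 = \sum_{u\in V}\mu(u)g(u)^2 \ge \sum_{u\in S\cap\widetilde V_F}\mu(u)\langle F(u),e_0\rangle^2 \ge c^2\sum_{u\in S}\mu(u)\|F(u)\|^2 = c^2\,\mathcal{E}_S,
\]
where the $0$-support vertices contribute nothing. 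Finally, since $c = 1-\tfrac{r^2}{2} \ge \sqrt{1-r^2}$ for $0<r<1$ (equivalently $(1-\tfrac{r^2}{2})^2 = 1-r^2+\tfrac{r^4}{4}\ge 1-r^2$), we get $\mathcal{E}_S \le c^{-2} \le \frac{1}{1-r^2}$, and multiplying by $1 = \mathcal{E}_V/k$ gives the stated bound $\mathcal{E}_S \le \frac{1}{k(1-r^2)}\mathcal{E}_V$.

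\textbf{Main obstacle.} The only subtle point is handling the \emph{projective} ambiguity correctly: one must notice that the distance bound controls $|\langle\widetilde F(u),e_0\rangle|$ rather than $\langle\widetilde F(u),e_0\rangle$ itself, and that this absolute value is exactly what survives when we square to pass to $\langle F(u),e_0\rangle^2$ in the quadratic form; the sign of $g(u)$ is irrelevant. Everything else — orthonormality bookkeeping, the elementary inequality $1-\tfrac{r^2}{2}\ge\sqrt{1-r^2}$, and restricting the sum to $S\cap\widetilde V_F$ — is routine. I would also remark that this is the natural ``projective'' analogue of the spherical spreading lemma of Lee--Oveis Gharan--Trevisan, the point being that identifying antipodes costs nothing in this estimate because the bound passes through a squared inner product.
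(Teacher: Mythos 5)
Your proof is correct and is essentially the paper's own argument: both use orthonormality to get $\mathcal{E}_V=k$ and the unit-norm quadratic form $\sum_u\mu(u)\langle F(u),x\rangle^2=1$ with $x$ the normalized image of a point of $S$, then convert the $\overline d_F$-diameter bound into $\langle F(u),x\rangle^2\geq(1-\tfrac{r^2}{2})^2\|F(u)\|^2\geq(1-r^2)\|F(u)\|^2$ on $S$. The only cosmetic difference is that the paper records the identity $\langle\widetilde F(u),\widetilde F(v)\rangle^2=[1-\tfrac12\overline d_F(u,v)^2]^2$ before estimating, whereas you pass directly to the inequality $|\langle\widetilde F(u),e_0\rangle|\geq 1-\tfrac{r^2}{2}$.
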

In the above, we used the fact that $\mathcal{E}_S=\mathcal{E}_{S\cap\widetilde{V}_F}$, $\mathcal{E}_V=\mathcal{E}_{\widetilde{V}_{F}}$.
The map $F$ is said to be $(r, \frac{1}{k(1-r^2)})$-spreading if it satisfies the conclusion of the lemma. This property tells us that when the subset is of small size, the $l^2$ mass of $F$ of it can not be too large. The $l^2$ mass of $F$ cannot concentrate in a particular small region.

\begin{proof}
Since $\mathcal{E}_S=\mathcal{E}_{S\cap\widetilde{V}_F}$, we can suppose w.l.o.g. that $S\subseteq \widetilde{V}_F$.
As in \cite{LGT2013}, for a unit vector $x\in \mathbb{R}^k$, i.e., $\|x\|^2=\langle x,x\rangle=\sum_{i=1}^kx_i^2=1$, we have
\begin{align*}
 \sum_{v\in V}\mu(v)\langle x, F(v)\rangle^2&=\sum_{v\in V}\mu(v)\left(\sum_{i=1}^kx_if_i(v)\right)^2=\sum_{v\in V}\sum_{i, j=1}^kx_ix_jf_i(v)f_j(v)\mu(v)\\&=\sum_{i,j=1}^kx_ix_j\sum_{v\in V}f_i(v)f_j(v)\mu(v)
=\sum_{i=1}^kx_i^2=1.
\end{align*}
We also have
\begin{equation*}
 \mathcal{E}_V=\sum_{v\in V}\sum_{i=1}^k\mu(v)f_i^2(v)=\sum_{i=1}^k\sum_{v\in V}\mu(v)f_i^2(v)=k.
\end{equation*}
Now, for any $u\in S$, we obtain
\begin{align}\label{spreadingCalculation}
 \frac{\mathcal{E}_V}{k}=1&=\sum_{v\in V}\mu(v)\left\langle F(v), \frac{F(u)}{\|F(u)\|}\right\rangle^2=\sum_{v\in V}\mu(v)\|F(v)\|^2\left\langle \frac{F(v)}{\|F(v)\|},\frac{F(u)}{\|F(u)\|}\right\rangle^2.
\end{align}
Note that
\begin{align*}
& \left\langle \frac{F(v)}{\|F(v)\|},\frac{F(u)}{\|F(u)\|}\right\rangle^2=\left\langle \frac{F(v)}{\|F(v)\|},\frac{-F(u)}{\|F(u)\|}\right\rangle^2\\
=&\left[\frac{1}{2}\left(2-\left\|\frac{F(v)}{\|F(v)\|}-\frac{F(u)}{\|F(u)\|}\right\|^2\right)\right]^2=\left[\frac{1}{2}\left(2-\left\|\frac{F(v)}{\|F(v)\|}+\frac{F(u)}{\|F(u)\|}\right\|^2\right)\right]^2.
\end{align*}
Therefore we have
\begin{align*}
 \left\langle \frac{F(v)}{\|F(v)\|},\frac{F(u)}{\|F(u)\|}\right\rangle^2=\left[1-\frac{1}{2}\overline{d}_F(u,v)^2\right]^2.
\end{align*}
Inserting this back into (\ref{spreadingCalculation}), we arrive at
\begin{equation*}
  \frac{\mathcal{E}_V}{k}\geq \sum_{v\in S}\mu(v)\|F(v)\|^2\left[1-\frac{1}{2}\overline{d}_F(u,v)^2\right]^2\geq \left(1-r^2+\frac{r^4}{4}\right)\mathcal{E}_S\geq (1-r^2)\mathcal{E}_S.
\end{equation*}
This proves the lemma.
\end{proof}

\subsection{Localization lemma}
Let $F$ be a map from $V$ to $\mathbb{R}^k$.
 Given $\epsilon>0$, we define the $\epsilon$-neighborhood of a subset $\widetilde{S}\subseteq \widetilde{V}_F$ with respect to the metric $\overline{d}_F$ as
\begin{equation}
 N_{\epsilon}(\widetilde{S}, \overline{d}_F):=\{v\in \widetilde{V}_F: \overline{d}_F(v, \widetilde{S})<\epsilon\}.
\end{equation}

Now for any given subset $S\subseteq V$, we define a cut-off function,
\begin{equation}\label{cutofffunction}
 \theta(v):=\left\{
              \begin{array}{ll}
                0, & \hbox{if $F(v)=0$;} \\
                \max\left\{0, 1-\frac{\overline{d}_F(v, S\cap \widetilde{V}_F)}{\epsilon}\right\}, & \hbox{otherwise.}
              \end{array}
            \right.
\end{equation}
Then we can localize $F$ to be
\begin{equation*}
 \Psi:=\theta\cdot F: V\rightarrow \mathbb{R}^k.
\end{equation*}
It is obvious that $\Psi\mid_{S}=F\mid_S$ and $\supp(\Psi)\subseteq N_{\epsilon}(S\cap \widetilde{V}_F, \overline{d}_F)$.
We extend Lemma 3.3 in \cite{LGT2013} to our new metric in the following localization lemma.
\begin{lemma}\label{lemmalocalization}
 Given $\epsilon<2$, let $\Psi$ be the localization of $F$ via $\theta$ as above. Then for any $e=\{u,v\}\in E$, we have
\begin{equation}\label{eqlocalization}
 \|\Psi(u)+\Psi(v)\|\leq\left(1+\frac{2}{\epsilon}\right)\|F(u)+F(v)\|.
\end{equation}
\end{lemma}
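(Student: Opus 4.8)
The plan is to compare $\|\Psi(u)+\Psi(v)\|$ with $\|F(u)+F(v)\|$ by controlling the difference in cut-off values $|\theta(u)-\theta(v)|$ via the metric $\overline{d}_F$. The key identity to exploit is that $\Psi=\theta\cdot F$, so we may write
\begin{equation*}
\Psi(u)+\Psi(v)=\theta(u)F(u)+\theta(v)F(v)=\frac{\theta(u)+\theta(v)}{2}\bigl(F(u)+F(v)\bigr)+\frac{\theta(u)-\theta(v)}{2}\bigl(F(u)-F(v)\bigr).
\end{equation*}
Since $0\leq\theta\leq 1$, the first term has norm at most $\|F(u)+F(v)\|$, so by the triangle inequality it suffices to bound $|\theta(u)-\theta(v)|\cdot\|F(u)-F(v)\|$ by $\frac{4}{\epsilon}\|F(u)+F(v)\|$ (and then divide by $2$).

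First I would observe that $\theta$ is $\frac{1}{\epsilon}$-Lipschitz with respect to $\overline{d}_F$ on $\widetilde{V}_F$ (the max of $0$ with a $\frac{1}{\epsilon}$-Lipschitz function is $\frac{1}{\epsilon}$-Lipschitz, and $\overline{d}_F(\cdot,S\cap\widetilde V_F)$ is $1$-Lipschitz), while $\theta$ vanishes off $\widetilde V_F$; one should check the mixed case where, say, $u\in\widetilde V_F$ but $v\notin\widetilde V_F$ separately, but there $F(v)=0$ so $\|F(u)-F(v)\|=\|F(u)\|=\|F(u)+F(v)\|$ and $|\theta(u)-\theta(v)|=\theta(u)\leq 1\leq\frac{2}{\epsilon}$ since $\epsilon<2$, which is already fine. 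So assume $u,v\in\widetilde V_F$. Then $|\theta(u)-\theta(v)|\leq\frac{1}{\epsilon}\overline{d}_F(u,v)$, and I must relate $\overline{d}_F(u,v)\cdot\|F(u)-F(v)\|$ to $\|F(u)+F(v)\|$.

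The heart of the matter is an elementary inequality on $\mathbb{R}^k$: for nonzero vectors $a=F(u)$, $b=F(v)$, with $\overline{d}_F(u,v)=\min\{\|\hat a-\hat b\|,\|\hat a+\hat b\|\}$ where $\hat a=a/\|a\|$, one has
\begin{equation*}
\overline d_F(u,v)\,\|a-b\|\ \leq\ 2\,\|a+b\|.
\end{equation*}
To see why this should hold: if $\overline d_F(u,v)=\|\hat a+\hat b\|$, then $\hat a$ and $\hat b$ point in roughly opposite directions, so $\|a+b\|$ is comparatively small but $\|a-b\|$ is comparatively large — the two factors trade off. By scaling $a,b$ by positive constants (which changes neither side's validity once one tracks norms carefully) one reduces to the case $\|a\|^2+\|b\|^2=2$; writing $\|a-b\|^2=2-2\langle a,b\rangle$, $\|a+b\|^2=2+2\langle a,b\rangle$, and using that $\langle\hat a,\hat b\rangle=\langle a,b\rangle/(\|a\|\|b\|)$ together with $\|a\|\|b\|\leq 1$, the claimed bound becomes a one-variable estimate in $t=\langle a,b\rangle$ that can be verified directly (the worst case being $\|a\|=\|b\|=1$). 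Combining, $|\theta(u)-\theta(v)|\,\|F(u)-F(v)\|\leq\frac{1}{\epsilon}\overline d_F(u,v)\|F(u)-F(v)\|\leq\frac{2}{\epsilon}\|F(u)+F(v)\|$, and plugging back into the decomposition gives $\|\Psi(u)+\Psi(v)\|\leq\|F(u)+F(v)\|+\frac{1}{\epsilon}\|F(u)+F(v)\|$...

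I expect the main obstacle to be getting the constant exactly right and handling the degenerate cases cleanly. One must be careful that $\overline d_F$ is defined using the \emph{normalized} vectors $\hat a,\hat b$ while the decomposition above involves the \emph{unnormalized} $F(u),F(v)$, so the scaling step needs to be done honestly rather than waved through; in particular the factor of $2$ in the elementary inequality, combined with the $\frac{1}{\epsilon}$ Lipschitz constant and the factor $\frac12$ from the decomposition, should assemble to the stated $\bigl(1+\frac{2}{\epsilon}\bigr)$. I would also double-check the case $\overline d_F(u,v)=\|\hat a-\hat b\|\leq\|\hat a+\hat b\|$, where $\|a+b\|$ is the larger quantity and the inequality is slack, so it poses no difficulty. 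Once the pointwise estimate \eqref{eqlocalization} is in hand, it is exactly the analogue of Lemma 3.3 of \cite{LGT2013} adapted to the ``$+$'' Rayleigh quotient, and together with Lemma \ref{lemmaspreading} it feeds directly into the localization of $F$ into $k$ disjointly supported maps.
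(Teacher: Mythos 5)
Your argument is correct, but it takes a genuinely different route from the paper's. The paper uses the asymmetric splitting $\Psi(u)+\Psi(v)=\theta(u)\bigl(F(u)+F(v)\bigr)+\bigl(\theta(v)-\theta(u)\bigr)F(v)$, so it must bound $|\theta(u)-\theta(v)|\,\|F(v)\|$, and it does so by a case distinction on which of the two norms realizes $\overline{d}_F(u,v)$: if it is $\bigl\|\tfrac{F(u)}{\|F(u)\|}-\tfrac{F(v)}{\|F(v)\|}\bigr\|$, the diameter bound $\sqrt{2}$ of Proposition \ref{proMetric} forces $\langle F(u),F(v)\rangle\geq 0$, hence $\|F(v)\|\leq\|F(u)+F(v)\|$ and $|\theta(u)-\theta(v)|\leq 1\leq \tfrac{2}{\epsilon}$; if it is $\bigl\|\tfrac{F(u)}{\|F(u)\|}+\tfrac{F(v)}{\|F(v)\|}\bigr\|$, the identity $\overline{d}_F(u,v)\|F(v)\|=\bigl\|\tfrac{\|F(v)\|}{\|F(u)\|}F(u)+F(v)\bigr\|$ plus the reverse triangle inequality $\bigl|\|F(u)\|-\|F(v)\|\bigr|\leq\|F(u)+F(v)\|$ gives the $\tfrac{2}{\epsilon}$ factor. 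You instead use the symmetric splitting into $\tfrac{\theta(u)+\theta(v)}{2}(F(u)+F(v))$ and $\tfrac{\theta(u)-\theta(v)}{2}(F(u)-F(v))$ together with one uniform two-vector inequality $\overline{d}_F(u,v)\,\|F(u)-F(v)\|\leq 2\|F(u)+F(v)\|$; that inequality is true, and your guess that the worst case is $\|a\|=\|b\|$ is the right one. To close the one loose step (the scaling reduction): with $c=\langle \hat a,\hat b\rangle$, $s=\|a\|^2+\|b\|^2$, $p=\|a\|\,\|b\|$, the case $c\leq 0$ (where $\overline{d}=\|\hat a+\hat b\|$) reads $(2+2c)(s-2pc)\leq 4(s+2pc)$, which, since the coefficient of $s$ after rearranging is $2c-2<0$ and $s\geq 2p$, reduces to the equal-norm case and then to $(c+1)^2\geq 0$; the case $c\geq 0$ is immediate from $(2-2c)(s-2pc)\leq 2s\leq 4(s+2pc)$. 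The degenerate cases ($F(u)$ or $F(v)$ zero) are handled by $0\leq\theta\leq 1$ exactly as in the paper, and both proofs use the same $\tfrac{1}{\epsilon}$-Lipschitz property of $\theta$ with respect to the pseudometric $\overline{d}_F$. What each approach buys: the paper's version avoids all algebra, leaning only on the diameter bound and a rescaling trick, and lands exactly on the stated constant $1+\tfrac{2}{\epsilon}$; your version requires the small computation above but removes the sign case-analysis from the final assembly and in fact yields the slightly sharper constant $1+\tfrac{1}{\epsilon}$, which of course implies \eqref{eqlocalization} and feeds into the localization scheme in the same way.
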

\begin{proof}
First observe that if $F(u)=F(v)=0$, (\ref{eqlocalization}) is trivial. If only one of $F(u), F(v)$ vanishes, (\ref{eqlocalization}) is implied by the fact that $|\theta|\leq 1$. Therefore we only need to consider the case that both $u, v\in \widetilde{V}_F$.

It is direct to calculate for $\{u,v\}\in E$
\begin{align}
& \|\Psi(u)+\Psi(v)\|=\|\theta(u)F(u)+\theta(v)F(v)\|\notag\\
\leq &|\theta(u)|\|F(u)+F(v)\|+|\theta(u)-\theta(v)|\|F(v)\|.\label{beginningoflocal}
\end{align}
If $\{u,v\}$ satisfies $\overline{d}_F(u,v)=\left\|\frac{F(u)}{\|F(u)\|}-\frac{F(v)}{\|F(v)\|}\right\|$, then by Proposition \ref{proMetric} (\text{i}) we know
\begin{equation*}
 \left\|\frac{F(u)}{\|F(u)\|}-\frac{F(v)}{\|F(v)\|}\right\|\leq \sqrt{2},
\end{equation*}
which implies $\langle F(u), F(v)\rangle\geq 0$. Therefore
\begin{equation*}
 |\theta(u)-\theta(v)|\|F(v)\|\leq \|F(v)\|\leq\|F(u)+F(v)\|.
\end{equation*}
If $\{u,v\}$ satisfies $\overline{d}_F(u,v)=\left\|\frac{F(u)}{\|F(u)\|}+\frac{F(v)}{\|F(v)\|}\right\|$, then $\langle F(u), F(v)\rangle\leq 0$, and
we have,
\begin{align*}
&|\theta(u)-\theta(v)|\|F(v)\|\leq\frac{1}{\epsilon}\overline{d}_F(u,v)\|F(v)\|=\frac{1}{\epsilon}\left\|\frac{\|F(v)\|}{\|F(u)\|}F(u)+F(v)\right\|\\
\leq&\frac{1}{\epsilon}\left(\|F(v)+F(u)\|+\left\|\frac{\|F(v)\|}{\|F(u)\|}F(u)-F(u)\right\|\right)\\
\leq &\frac{2}{\epsilon}\|F(u)+F(v)\|.
\end{align*}
In conclusion, we have
\begin{equation}
 |\theta(u)-\theta(v)|\|F(v)\|\leq \frac{2}{\epsilon}\|F(u)+F(v)\|.
\end{equation}
Recalling (\ref{beginningoflocal}) and the fact $|\theta|\leq 1$, we obtain inequality (\ref{eqlocalization}), finishing the proof of the lemma.
\end{proof}

\section{Finding $k$-sub-bipartition with small $1-\overline{\phi}$}\label{sectionClustering}
In this section, we will prove
\begin{thm}\label{thmUpperbound}
 For a map $F: V\rightarrow \mathbb{R}^k$ constructed from $l^2(V,\mu)$-orthonormal functions $f_1, f_2, \ldots, f_k$ as in (\ref{MapF}), there exists a $k$-sub-bipartition $\{(V_{2i-1}, V_{2i})\}_{i=1}^k$, such that for each $i$
\begin{equation}
 \frac{(1-\overline{\phi}(V_{2i-1}, V_{2i}))^2}{2}\leq 1-\sqrt{1-(1-\overline{\phi}(V_{2i-1}, V_{2i}))^2}\leq Ck^6\overline{\mathcal{R}}(F),
\end{equation}
where $C$ is a universal constant.
\end{thm}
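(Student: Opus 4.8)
The plan is to build from $F$ a family of $k$ maps $\Psi_1,\dots,\Psi_k:V\to\mathbb{R}^k$ with pairwise disjoint supports, each of the form $\Psi_i=\theta_i\cdot F$ for a cut-off $\theta_i$ as in $(\ref{cutofffunction})$, arranged so that (a) $\sum_{u\in V}\|\Psi_i(u)\|^2\mu(u)$ is at least a $\Theta(1/k)$-fraction of $\mathcal{E}_V=\sum_{u\in V}\|F(u)\|^2\mu(u)$, and (b) $\|\Psi_i(u)+\Psi_i(v)\|\lesssim\|F(u)+F(v)\|$ on every edge. Granting (a) and (b) one gets $\overline{\mathcal{R}}(\Psi_i)\le Ck^6\,\overline{\mathcal{R}}(F)$; since $\overline{\mathcal{R}}(\Psi_i)$ is a ratio of two sums over the $k$ coordinates of $\Psi_i$, some coordinate function $g_i$ of $\Psi_i$ satisfies $\overline{\mathcal{R}}(g_i)\le\overline{\mathcal{R}}(\Psi_i)$, and Lemma \ref{Dual CheegerRayleigh} applied to $g_i$ yields disjoint $V_{2i-1},V_{2i}\subseteq\supp(g_i)\subseteq\supp(\Psi_i)$ with $1-\sqrt{1-(1-\overline{\phi}(V_{2i-1},V_{2i}))^2}\le\overline{\mathcal{R}}(g_i)$. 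Disjointness of the $\supp(\Psi_i)$ makes $\{(V_{2i-1},V_{2i})\}_{i=1}^k$ a genuine $k$-sub-bipartition, and the elementary bound $\tfrac{x^2}{2}\le 1-\sqrt{1-x^2}$ finishes; so the whole task is the construction of the $\Psi_i$.

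First I would push $\widetilde{V}_F$ into $(P^{k-1}\mathbb{R},\overline{d})$ via $Pr\circ\widetilde{F}$ and apply the padded random partition Theorem \ref{thmPadderandompartition} there — legitimately, since by Proposition \ref{proMetric} that space is doubling with $\dim_{\overline{d}}=O(k)$ — with radius $r$ and deficiency $\delta$ both of order $1/k$, pulling the resulting partition back to $\widetilde{V}_F$. This produces $\alpha=32\,\dim_{\overline{d}}(P^{k-1}\mathbb{R})/\delta=O(k^2)$, hence padding radius $\beta:=r/\alpha=\Theta(k^{-5/2})$, and a distribution on partitions of $\widetilde{V}_F$ all of whose parts have $\overline{d}_F$-diameter $\le r$; a first-moment argument then fixes one partition $P=\{S_j\}$ whose $\beta$-padded vertices — those $u$ with $B_{\overline{d}_F}(u,\beta)\subseteq P(u)$ — carry $l^2$ mass at least $(1-\delta)\mathcal{E}_V$. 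Because $r<1$, the spreading Lemma \ref{lemmaspreading} caps the $l^2$ mass of every part at $\frac{\mathcal{E}_V}{k(1-r^2)}$, which for $r^2\asymp 1/k$ is $(1+O(1/k))\tfrac{\mathcal{E}_V}{k}$.

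The heart of the matter is to bundle the parts of $P$ into exactly $k$ pairwise disjoint groups $G_1,\dots,G_k$ whose $\beta$-padded cores each carry $l^2$ mass $\gtrsim\mathcal{E}_V/k$. The uniform cap $t:=\frac{\mathcal{E}_V}{k(1-r^2)}$ on part masses together with $\sum_j(\text{mass of }S_j)=\mathcal{E}_V$ forces $P$ to have at least $k$ parts once $r^2<1/k$; moreover a part of mass $\ge\sigma:=c\,\mathcal{E}_V/k$ automatically has mass in the window $[\sigma,t]$, hence already $\Theta(\mathcal{E}_V/k)$, so one keeps each such "large" part as its own group (retaining only $k$ of them if there are more), while the "small" parts, being individually below $\sigma$, are packed greedily into the remaining groups and overfill each by only a negligible amount; the arithmetic closes exactly because $\delta,r^2\asymp 1/k$ make $(1-\delta)\mathcal{E}_V-(k-1)t=\Theta(\mathcal{E}_V/k)>0$. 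Since a point that $\beta$-pads its own part in $\overline{d}_F$ lies at distance $\ge\beta$ from every other part, the cores $\mathrm{core}(G_i):=\bigcup_{S\in G_i}\{u\in S: B_{\overline{d}_F}(u,\beta)\subseteq S\}$ of distinct groups are $\beta$-separated. I expect this extraction of exactly $k$ pieces each with a guaranteed $\Theta(1/k)$-share of the mass — in the face of a spreading bound that permits a rather uneven distribution — to be the main obstacle, and it is where the powers of $k$ pile up.

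It remains to localize. For each $i$ let $\theta_i$ be the cut-off $(\ref{cutofffunction})$ built from the subset $\mathrm{core}(G_i)$ at scale $\epsilon:=\beta/2$, and set $\Psi_i:=\theta_i\cdot F$. Then $\Psi_i\equiv F$ on $\mathrm{core}(G_i)$, giving (a); and $\supp(\Psi_i)\subseteq N_\epsilon(\mathrm{core}(G_i),\overline{d}_F)$, so the supports are pairwise disjoint because $2\epsilon=\beta$ and distinct cores are $\beta$-separated. The localization Lemma \ref{lemmalocalization} (applicable since $\epsilon<2$) gives $\|\Psi_i(u)+\Psi_i(v)\|\le(1+2/\epsilon)\|F(u)+F(v)\|$ on every edge, i.e. (b); summing squares over edges and using $\sum_{e}\|F(u)+F(v)\|^2w_{uv}=\overline{\mathcal{R}}(F)\,\mathcal{E}_V$,
\begin{equation*}
\overline{\mathcal{R}}(\Psi_i)\;\le\;\frac{(1+2/\epsilon)^2\,\overline{\mathcal{R}}(F)\,\mathcal{E}_V}{\text{padded }l^2\text{ mass of }G_i}\;\lesssim\;\Big(\tfrac{1}{\beta}\Big)^2\cdot k\cdot\overline{\mathcal{R}}(F)\;=\;O(k^{6})\,\overline{\mathcal{R}}(F).
\end{equation*}
Carrying the absolute constants through $\dim_{\overline{d}}(P^{k-1}\mathbb{R})\le 4(\log_2\pi-\tfrac12)(k-1)$ and the chosen $r,\delta$ produces a universal $C$, and combining with Lemma \ref{Dual CheegerRayleigh} as in the first paragraph gives the theorem.
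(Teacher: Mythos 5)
Your proposal is correct and takes essentially the same route as the paper's proof: spreading (Lemma \ref{lemmaspreading}) plus a padded random partition of $(\widetilde{V}_F,\overline{d}_F)$ via Proposition \ref{proMetric} and Theorem \ref{thmPadderandompartition}, merging parts into $k$ separated groups each carrying $\Theta(\mathcal{E}_V/k)$ of the padded mass (this is the paper's Lemma \ref{lemmaFindsubsets}), cut-off localization (Lemma \ref{lemmalocalization}), coordinate selection, and Lemma \ref{Dual CheegerRayleigh}. The only blemish is the phrase ``radius $r$ and deficiency $\delta$ both of order $1/k$'': your subsequent computations ($\beta=\Theta(k^{-5/2})$ and the final $O(k^6)$) require $r\asymp k^{-1/2}$, which is exactly the paper's choice $r=\tfrac{1}{3\sqrt{k}}$, while $\delta\asymp 1/k$ is as in the paper.
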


 It was shown in (\ref{2minusEigen}) that once we take the $k$ functions above to be the top $k$ orthonormal eigenfunctions, we will have $2-\lambda_{N-k+1}\geq \overline{\mathcal{R}}(F)$. Then by the definition of $\overline{h}(k)$, (\ref{My2}) follows immediately from this theorem.

 We need the following lemma (modified from Lemma 3.5 of \cite{LGT2013}) to get $k$-disjoint subsets of $(\widetilde{V}_F, \overline{d}_F)$.
\begin{lemma}\label{lemmaFindsubsets}
 Suppose $F$ is $(r, \frac{1}{k}\left(1+\frac{1}{8k}\right))$-spreading, and $(\widetilde{V}_F, \overline{d}_F)$ has a $(r, \alpha, 1-\frac{1}{4k})$-padded random partition, then there exists $k$ non-empty, mutually disjoint subsets $T_1, T_2, \ldots, T_k\subseteq \widetilde{V}_F$ such that\begin{itemize}
      \item for any $1\leq i\neq j\leq k$, $\overline{d}_F(T_i, T_j)\geq 2\frac{r}{\alpha}$;
      \item for any $1\leq i\leq k$, $\mathcal{E}_{T_i}\geq \frac{1}{2k}\mathcal{E}_V$.
    \end{itemize}
\end{lemma}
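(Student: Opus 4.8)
The plan is to adapt the selection argument of Lee--Oveis Gharan--Trevisan (their Lemma~3.5) to the pseudometric space $(\widetilde{V}_F,\overline{d}_F)$. I would first fix one partition $P=\{S_i\}_i$ in the support of the given $(r,\alpha,1-\tfrac{1}{4k})$-padded random partition, so that every part has $\overline{d}_F$-diameter at most $r$. Inside each part $S_i$ I pass to its \emph{core} $S_i^{\circ}$, consisting of the points of $S_i$ whose padding ball of radius $\tfrac{r}{\alpha}$ is contained in $S_i$. The point of working with cores is separation: if $v\in S_i^{\circ}$ and $v'\in S_j^{\circ}$ with $i\neq j$, then their padding balls lie inside the disjoint parts $S_i$ and $S_j$, which forces $\overline{d}_F(v,v')\geq 2\tfrac{r}{\alpha}$. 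Hence \emph{any} family of $k$ sets, each obtained as a union of cores of distinct parts, automatically satisfies the first bullet of the lemma, and the whole problem reduces to distributing the cores of $P$ among $k$ groups so that each group carries $l^{2}$-mass at least $\tfrac{1}{2k}\mathcal{E}_V$.

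Next I would record two mass estimates, used in opposite directions. The \emph{upper} bound comes from spreading: each part $S_i$ has $\overline{d}_F$-diameter $\leq r<1$, so $\mathcal{E}_{S_i^{\circ}}\leq\mathcal{E}_{S_i}\leq\tfrac{1}{k}\bigl(1+\tfrac{1}{8k}\bigr)\mathcal{E}_V$, i.e.\ no single core is too heavy. The \emph{lower} bound comes from padding: summing the guarantee that a point is well-padded with probability at least $1-\tfrac{1}{4k}$, weighted by $\mu(v)\|F(v)\|^{2}$, yields $\mathbb{E}_{\nu}\bigl[\sum_i\mathcal{E}_{S_i^{\circ}}\bigr]\geq(1-\tfrac{1}{4k})\mathcal{E}_V$, so I may in fact choose the partition $P$ above in the support of $\nu$ with $\sum_i\mathcal{E}_{S_i^{\circ}}\geq(1-\tfrac{1}{4k})\mathcal{E}_V$.

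It then remains to group the nonempty cores. Call a core \emph{heavy} if its mass is at least $\tfrac{1}{2k}\mathcal{E}_V$; if there are $k$ or more heavy cores we are immediately done, so assume there are $\ell\leq k-1$ of them. I would make each heavy core its own group, then greedily pack the light cores into the groups $G_{\ell+1},\dots,G_{k-1}$, closing a group the moment its mass reaches $\tfrac{1}{2k}\mathcal{E}_V$ (such a group then has mass below $\tfrac{1}{k}\mathcal{E}_V$, the last core added being light), and put everything left over into $G_k$. Each $G_t$ with $t<k$ has mass at least $\tfrac{1}{2k}\mathcal{E}_V$ by construction and at most $\tfrac{1}{k}(1+\tfrac{1}{8k})\mathcal{E}_V$, so
\[
\mathcal{E}_{T_k}\;=\;\sum_i\mathcal{E}_{S_i^{\circ}}-\sum_{t<k}\mathcal{E}_{T_t}\;\geq\;\Bigl(1-\tfrac{1}{4k}\Bigr)\mathcal{E}_V-(k-1)\tfrac{1}{k}\Bigl(1+\tfrac{1}{8k}\Bigr)\mathcal{E}_V\;=\;\Bigl(\tfrac{5}{8k}+\tfrac{1}{8k^{2}}\Bigr)\mathcal{E}_V\;>\;\tfrac{1}{2k}\mathcal{E}_V .
\]
The same computation shows the light mass never runs out before all of $G_{\ell+1},\dots,G_{k-1}$ are closed, since the mass available before any such stage is at least the right-hand side above. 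Setting $T_t:=\bigcup_{S_i^{\circ}\in G_t}S_i^{\circ}$ then yields $k$ nonempty, pairwise $2\tfrac{r}{\alpha}$-separated sets with $\mathcal{E}_{T_t}\geq\tfrac{1}{2k}\mathcal{E}_V$, proving the lemma.

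The genuine content is this last arithmetic, and it is where I expect the main difficulty to lie: the spreading parameter $\tfrac{1}{k}(1+\tfrac{1}{8k})$ bounds how much each of the first $k-1$ groups can overshoot, the padding failure $\tfrac{1}{4k}$ bounds how much $l^{2}$-mass is lost altogether, and the two constants are calibrated exactly so the displayed quantity stays strictly above $\tfrac{1}{2k}\mathcal{E}_V$; weakening either constant would let the mass leak away into overshoots and leave $G_k$ possibly empty. By comparison, the separation statement is soft and follows at once from disjointness of the parts.
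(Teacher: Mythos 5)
Your proposal is correct and follows essentially the same route as the paper: an expectation argument over the padded random partition to pick a partition whose padded cores carry $l^2$ mass at least $\left(1-\frac{1}{4k}\right)\mathcal{E}_V$, the spreading bound $\frac{1}{k}\left(1+\frac{1}{8k}\right)\mathcal{E}_V$ on each part, and the same arithmetic calibration to extract $k$ groups each of mass at least $\frac{1}{2k}\mathcal{E}_V$. The only cosmetic differences are that you distribute the cores by a heavy/light greedy packing where the paper iteratively merges pairs of sets of mass below $\frac{1}{2k}\mathcal{E}_V$ and takes the tail union as the $k$-th set, and that you make explicit the $2\frac{r}{\alpha}$-separation of cores of distinct parts, which the paper's proof leaves implicit.
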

\begin{proof}
Let $\mathcal{P}$ be the $(r, \alpha, 1-\frac{1}{4k})$-padded random partition in the assumption.
Denote by $I_{B_{\overline{d}_F}(v,\frac{r}{\alpha})\subseteq \mathcal{P}(v)}$ the indicator function for the event that $B_{\overline{d}_F}(v,\frac{r}{\alpha})\subseteq \mathcal{P}(v)$ happens. Then we calculate the expectation
\begin{align*}
 \mathbb{E}_{\mathcal{P}}\left[\sum_{v\in V}\mu(v)\|F(v)\|^2I_{B_{\overline{d}_F}(v,\frac{r}{\alpha})\subseteq \mathcal{P}(v)}\right]\geq \sum_{v\in V}\mu(v)\|F(v)\|^2\left(1-\frac{1}{4k}\right).
\end{align*}
If we denote $\widehat{S}:=\{v\in S: B_{\overline{d}_F}(v,\frac{r}{\alpha})\subseteq S\}$, this is equivalently to
\begin{align*}
\sum_{P\in\mathcal{P}}\left(\sum_{S\in P}\sum_{v\in\widehat{S}}\mu(v)\|F(v)\|^2\right)\mathbb{P}(P)\geq\left(1-\frac{1}{4k}\right)\mathcal{E}_V.
\end{align*}
Therefore there exist at least one partition $P=\{S_i\}_{i=1}^m$ of $\widetilde{V}_F$ for some natural number $m$ such that
\begin{equation}\label{totallowerbound}
 \sum_{i=1}^m\mathcal{E}_{\widehat{S}_i}\geq\left(1-\frac{1}{4k}\right)\mathcal{E}_V.
\end{equation}
By the spreading property in the assumption, we know for every $1\leq i\leq m$
\begin{equation}\label{singleupperbound}
 \mathcal{E}_{\widehat{S}_i}\leq\mathcal{E}_{S_i}\leq \frac{1}{k}\left(1+\frac{1}{8k}\right)\mathcal{E}_V.
\end{equation}

We can construct the desired $k$ disjoint subsets from $\widehat{S}_1, \widehat{S}_2, \ldots, \widehat{S}_m$ by the following procedure.
If we can find two of these sets, say $\widehat{S}_i, \widehat{S}_j$, such that
\begin{equation}\label{small two}
 \mathcal{E}_{\widehat{S}_i}<\frac{1}{2k}\mathcal{E}_V,  \mathcal{E}_{\widehat{S}_j}<\frac{1}{2k}\mathcal{E}_V,
\end{equation}
then we replace them by the set $\widehat{S}_i\cup \widehat{S}_j$. Note that in this process we did not violate the fact (\ref{singleupperbound}) since
\begin{equation}
 \mathcal{E}_{\widehat{S}_i\cup \widehat{S}_j}<\frac{1}{k}\mathcal{E}_V<\frac{1}{k}\left(1+\frac{1}{8k}\right)\mathcal{E}_V.
\end{equation}
We repeat the above operation until we can not find two sets anymore such that (\ref{small two}) holds. Therefore, when we stop, we get a series of subsets
$T_1, T_2, \ldots, T_r$ for some number $r$
such that
\begin{equation}\label{singleupperbound2}
 \mathcal{E}_{T_i}\leq \frac{1}{k}\left(1+\frac{1}{8k}\right)\mathcal{E}_V, \text{ for } i=1,2,\ldots,r,
\end{equation}
and
\begin{equation}
 \mathcal{E}_{T_i}\geq \frac{1}{2k}\mathcal{E}_V, \text{ for }i=1,2,\ldots, r-1.
\end{equation}
We are not sure about the lower bound of at most one of those $\{\mathcal{E}_{T_i}\}_{i=1}^r$, here we suppose w.l.o.g. that it is $\mathcal{E}_{T_r}$.

Observe that
\begin{equation}
 (k-1)\cdot\frac{1}{k}\left(1+\frac{1}{8k}\right)\leq 1-\frac{1}{4k}-\frac{1}{2k}.
\end{equation}
Recalling (\ref{totallowerbound}) and (\ref{singleupperbound2}), we know $r\geq k$, and if we take
\begin{equation*}
 T_k:=\bigcup_{j=k}^{r}T_j,
\end{equation*}
we will have \begin{equation*}
 \mathcal{E}_{T_k}\geq\frac{1}{2k}\mathcal{E}_V.
\end{equation*}
This proves the lemma.
\end{proof}
\begin{proof}[Proof of Theorem \ref{thmUpperbound}]
 Choosing $r=\frac{1}{3\sqrt{k}}$, by Lemma \ref{lemmaspreading} we have that $F$ is $(r, \frac{1}{k}\left(1+\frac{1}{8k}\right))$-spreading. If we further take $\delta=\frac{1}{4k}$, by
 Theorem \ref{thmPadderandompartition}, $V$ has a $(r, \alpha, 1-\frac{1}{4k})$-padded random partition with
$$\alpha=128k\dim_{\overline{d}}(P^{k-1}\mathbb{R}).$$
Recalling Proposition \ref{proMetric} (\textrm{ii}), we know that there exists an absolute constant $C=4(\log_2\pi-\frac{1}{2})$ such that $\alpha\leq 128Ck(k-1)$.
Then we can apply Lemma \ref{lemmaFindsubsets} to find $k$ disjoint subsets $T_1, T_2, \ldots, T_k$, such that
\begin{itemize}
  \item for any $1\leq i\neq j\leq k$, $\overline{d}_F(T_i, T_j)\geq 2\frac{r}{\alpha}\geq \frac{2}{3\sqrt{k}}\frac{1}{128Ck(k-1)}$;
  \item for any $1\leq i\leq k$, $\mathcal{E}_{T_i}\geq\frac{1}{2k}\mathcal{E}_V$.
\end{itemize}
Now let $\{\theta_i\}_{i=1}^k$, $k\geq 2$ be the $k$ cut-off functions defined in (\ref{cutofffunction}) (replacing $S$ there by $T_i$) with $\epsilon=\frac{1}{3\sqrt{k}}\frac{1}{128Ck(k-1)}$.
Then we get $k$ localizations $\Psi_i=\theta_iF$ of $F$ with disjoint supports. Note that $\Psi_i\mid_{T_i}=F\mid_{T_i}$. Applying Lemma \ref{lemmalocalization}, we arrive at
\begin{align}\label{final1}
 \overline{\mathcal{R}}(\Psi_i)\leq 2k\left(1+768C\sqrt{k}k(k-1)\right)^2\overline{\mathcal{R}}(F)\leq 2\times (768C)^2k^6\overline{\mathcal{R}}(F).
\end{align}
Let us write $\Psi_i(u)=(\Psi_i^1(u), \Psi_i^2(u),\ldots, \Psi_i^k(u))$. Then we can conclude that there exists a $j_0\in \{1,2,\ldots, k\}$ such that $\Psi_i^{j_0}$ is not identically zero and
\begin{equation}\label{final2}
 \overline{\mathcal{R}}(\Psi_i^{j_0})\leq \overline{\mathcal{R}}(\Psi_i).
\end{equation}
Then by Lemma \ref{Dual CheegerRayleigh}, for each $T_i$, we can find two subsets $V_{2i-1}, V_{2i}\subseteq T_i$ which satisfy $V_{2i-1}\cap V_{2i}=\emptyset$, $V_{2i-1}\cup V_{2i}\neq \emptyset$ such that
\begin{equation}\label{final3}
 1-\sqrt{1-(1-\overline{\phi}(V_{2i-1}, V_{2i}))^2}\leq \overline{\mathcal{R}}(\Psi_i^{j_0}).
\end{equation}
Combining (\ref{final1}), (\ref{final2}) and (\ref{final3}), we prove the theorem.
\end{proof}

Applying (\ref{final1}) and (\ref{final2}) to the top $k$ orthonormal eigenfunctions, we arrive at the following lemma.
\begin{lemma}
For every graph $G$, and each natural number $1\leq k\leq N$, there exist $k$ disjointly supported functions $\psi_1, \ldots, \psi_k: V\rightarrow \mathbb{R}$ such that for each $1\leq i\leq k$,
\begin{equation}
\overline{\mathcal{R}}(\psi_i)\leq Ck^6(2-\lambda_{N-k+1}),
\end{equation}
where $C$ is a universal constant.
\end{lemma}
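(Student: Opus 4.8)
The plan is to simply specialize the machinery developed in the proof of Theorem \ref{thmUpperbound} to the distinguished choice of $k$ orthonormal functions, namely the top $k$ eigenfunctions $f_{N-k+1},\ldots,f_N$ of $\Delta$, and then to re-express the resulting Rayleigh-quotient bound in terms of the spectral gap $2-\lambda_{N-k+1}$ instead of $\overline{\mathcal{R}}(F)$. Concretely, I would let $F\colon V\to\mathbb{R}^k$ be the map $F(v)=(f_{N-k+1}(v),\ldots,f_N(v))$ of \eqref{MapF}. The single new ingredient — already recorded in \eqref{2minusEigen} — is the estimate
\[
\overline{\mathcal{R}}(F)\leq 2-\lambda_{N-k+1},
\]
which is just the dual min-max principle \eqref{keigenvalue2} applied to the operator $I+P$.

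Next I would run, essentially verbatim, the chain of choices from the proof of Theorem \ref{thmUpperbound}. Take $r=\tfrac{1}{3\sqrt k}$, so that $F$ is $\bigl(r,\tfrac1k(1+\tfrac1{8k})\bigr)$-spreading by Lemma \ref{lemmaspreading}; take $\delta=\tfrac1{4k}$ and invoke Theorem \ref{thmPadderandompartition} together with the doubling-dimension estimate of Proposition \ref{proMetric}(\textrm{ii}) to equip $(\widetilde V_F,\overline d_F)$ with an $(r,\alpha,1-\tfrac1{4k})$-padded random partition with $\alpha\leq 128Ck(k-1)$; feed these two facts into Lemma \ref{lemmaFindsubsets} to produce $k$ pairwise disjoint, pairwise $\overline d_F$-separated subsets $T_1,\ldots,T_k\subseteq\widetilde V_F$ with $\mathcal{E}_{T_i}\geq\tfrac1{2k}\mathcal{E}_V$; and cut off $F$ around each $T_i$ via the function $\theta_i$ of \eqref{cutofffunction} with $\epsilon=\tfrac{1}{3\sqrt k}\cdot\tfrac{1}{128Ck(k-1)}$, obtaining localizations $\Psi_i=\theta_iF$ with pairwise disjoint supports. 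By the localization Lemma \ref{lemmalocalization}, the estimate \eqref{final1} then gives $\overline{\mathcal{R}}(\Psi_i)\leq 2(768C)^2k^6\,\overline{\mathcal{R}}(F)$ for every $i$, and by the coordinate-selection step \eqref{final2} there is a $j_0=j_0(i)$ with $\Psi_i^{j_0}\not\equiv 0$ and $\overline{\mathcal{R}}(\Psi_i^{j_0})\leq\overline{\mathcal{R}}(\Psi_i)$; I would set $\psi_i:=\Psi_i^{j_0}$.

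Finally I would check that $\psi_1,\ldots,\psi_k$ are disjointly supported — each $\psi_i$ is a coordinate of $\Psi_i$, and the $\Psi_i$ have disjoint supports by construction — and substitute $\overline{\mathcal{R}}(F)\leq 2-\lambda_{N-k+1}$ into the bound above, so that $\overline{\mathcal{R}}(\psi_i)\leq 2(768C)^2k^6(2-\lambda_{N-k+1})$; renaming $2(768C)^2$ (with $C=4(\log_2\pi-\tfrac12)$ as in Theorem \ref{thmUpperbound}) as a new universal constant completes the argument. The case $k=1$ is handled directly, taking $\psi_1=f_N$ and using $\overline{\mathcal{R}}(f_N)=2-\lambda_N$. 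There is no genuine obstacle here: all of the analytic content — the spreading lemma, the padded-partition argument on the real projective space $(P^{k-1}\mathbb{R},\overline d)$, and the localization lemma — is already in place, and the only point requiring a moment's care is that passing from the vector-valued localizations $\Psi_i$ to the scalar functions $\psi_i$ preserves both the disjointness of the supports and the Rayleigh-quotient bound, which it does by \eqref{final2}.
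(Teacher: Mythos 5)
Your proposal is correct and follows essentially the same route as the paper: the paper derives this lemma precisely by applying \eqref{final1} and \eqref{final2} from the proof of Theorem \ref{thmUpperbound} to the map $F$ built from the top $k$ orthonormal eigenfunctions, combined with the bound $\overline{\mathcal{R}}(F)\leq 2-\lambda_{N-k+1}$ of \eqref{2minusEigen}, and takes $\psi_i=\Psi_i^{j_0}$ exactly as you do. Your separate treatment of $k=1$ matches the paper's remark that this case is trivial.
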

Note in the above lemma that the case $k=1$ is trivial. Combining this lemma with Theorem 4.10 in \cite{KLLGT2013}, we prove the following \emph{improved}
version of higher order dual Cheeger inequalities.
\begin{thm}
For every graph $G$ and $1\leq k\leq l\leq N$, we have
\begin{equation}\label{improvedHOdualCheeger}
1-\overline{h}(k)\leq Clk^6\frac{2-\lambda_{N-k+1}}{\sqrt{2-\lambda_{N-l+1}}},
\end{equation}where $C$ is a universal constant.
\end{thm}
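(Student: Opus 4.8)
The plan is to use the lemma just proved as a supply of $k$ disjointly supported functions with a small \emph{dual} Rayleigh quotient, and then run the improved-Cheeger machinery of Kwok--Lau--Lee--Oveis Gharan--Trevisan (Theorem~4.10 of \cite{KLLGT2013}). That machinery is phrased for the ordinary Rayleigh quotient, conductance and Laplacian eigenvalues, so the first step is to transport our data into that world. Concretely, I would pass to the bipartite double cover $\widehat{G}$ of $G$: its vertex set is $V\times\{0,1\}$, the degree of $(u,\epsilon)$ equals $d_u$, and each edge $\{u,v\}\in E$ of weight $w_{uv}$ gives the two edges $\{(u,0),(v,1)\}$ and $\{(u,1),(v,0)\}$, both of weight $w_{uv}$. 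Given the functions $\psi_1,\dots,\psi_k$ from the lemma, with $\overline{\mathcal R}(\psi_i)\le Ck^6(2-\lambda_{N-k+1})$, lift each to the odd function $\widehat\psi_i$ defined by $\widehat\psi_i(u,0)=\psi_i(u)$ and $\widehat\psi_i(u,1)=-\psi_i(u)$. A one-line computation gives $\mathcal R_{\widehat G}(\widehat\psi_i)=\overline{\mathcal R}_G(\psi_i)$, and since the $\psi_i$ are disjointly supported so are the $\widehat\psi_i$ (with $\supp(\widehat\psi_i)=\supp(\psi_i)\times\{0,1\}$).

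Next I would apply Theorem~4.10 of \cite{KLLGT2013} on $\widehat G$ to the family $\widehat\psi_1,\dots,\widehat\psi_k$ and the index $l$, obtaining $k$ mutually disjoint nonempty subsets $\widehat S_1,\dots,\widehat S_k$ of $\widehat G$, with $\widehat S_i\subseteq\supp(\widehat\psi_i)$, whose ordinary conductance is at most a universal constant times $l\cdot\big(\max_i\mathcal R_{\widehat G}(\widehat\psi_i)\big)/\sqrt{\Lambda_l}$, where $\Lambda_l$ is the $l$-th eigenvalue seen by that argument. Since the sets arise as sweep sets of odd functions taken at a nonnegative threshold, each $\widehat S_i$ is disjoint from its image $\widehat S_i^{\,*}$ under the deck involution; hence the pull-backs $V_{2i-1}:=\{u:(u,0)\in\widehat S_i\}$ and $V_{2i}:=\{u:(u,1)\in\widehat S_i\}$ are disjoint and, the $\supp(\psi_i)$ being disjoint, $\{(V_{2i-1},V_{2i})\}_{i=1}^k\in\text{Pair}(k)$. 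A short calculation with \eqref{decomp of volumen} (essentially the one already carried out in the proof of Proposition~\ref{pro1}) shows $1-\overline\phi(V_{2i-1},V_{2i})\le \phi_{\widehat G}(\widehat S_i)$, so by the definition of $\overline h(k)$ we get $1-\overline h(k)\le \max_i\phi_{\widehat G}(\widehat S_i)$, and it remains only to control $\Lambda_l$.

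This last point is the only real obstacle. The full spectrum of $\widehat G$ is the multiset $\{\lambda_i(G)\}_{i=1}^N\cup\{2-\lambda_i(G)\}_{i=1}^N$, so $\lambda_l(\widehat G)$ is contaminated by the small eigenvalues $\lambda_i(G)$ (in particular $\lambda_1(G)=0$) and is in general strictly smaller than $2-\lambda_{N-l+1}(G)$; using it would only produce a weaker bound. The resolution is that the functions $\widehat\psi_i$, and every auxiliary function produced from them in the proof of Theorem~4.10, lie in the deck-anti-invariant (``odd'') subspace of $l^2(\widehat G,\widehat\mu)$, which is invariant under $\Delta_{\widehat G}$ and on which $\Delta_{\widehat G}$ is unitarily equivalent to $2I-\Delta_G$, with spectrum $2-\lambda_N(G)\le\cdots\le 2-\lambda_1(G)$. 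Running the min-max steps of Theorem~4.10 inside this subspace (the natural $l$-dimensional test space being spanned by the odd lifts of the eigenfunctions $f_{N-l+1},\dots,f_N$ of $\Delta_G$) replaces $\Lambda_l$ by exactly $2-\lambda_{N-l+1}(G)$. Equivalently, and more transparently, one observes that Theorem~4.10 of \cite{KLLGT2013} is insensitive to which reversible self-adjoint operator it is fed and applies it directly to $2I-\Delta_G$ on $l^2(V,\mu)$, with $\overline{\mathcal R}$, $1-\overline\phi$ and Lemma~\ref{Dual CheegerRayleigh} playing the roles of the Rayleigh quotient, conductance and Cheeger sweep; this is the formulation I would adopt. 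Either way one arrives at $1-\overline h(k)\le C\,l\cdot\big(\max_i\overline{\mathcal R}(\psi_i)\big)/\sqrt{2-\lambda_{N-l+1}}$, and inserting $\max_i\overline{\mathcal R}(\psi_i)\le Ck^6(2-\lambda_{N-k+1})$ from the lemma yields \eqref{improvedHOdualCheeger}.
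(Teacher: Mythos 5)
Your proposal is correct and follows essentially the same route as the paper: the paper's entire argument is to feed the $k$ disjointly supported functions $\psi_i$ with $\overline{\mathcal{R}}(\psi_i)\leq Ck^6(2-\lambda_{N-k+1})$ into Theorem 4.10 of \cite{KLLGT2013}, read in the dual setting (i.e.\ for $I+P$, with $\overline{\mathcal{R}}$, $1-\overline{\phi}$ and Lemma \ref{Dual CheegerRayleigh} replacing the Rayleigh quotient, conductance and sweep cut). Your double-cover/odd-subspace discussion, and the direct application to $2I-\Delta$ that you ultimately adopt, are simply a careful justification of the transfer step that the paper leaves implicit.
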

(\ref{improvedHOdualCheeger}) can be seen as a dual result to Corollary 1.3 (i) in \cite{KLLGT2013}.

\section{Trees and cycles}\label{sectionExamples}
In this section, we explore the relations between spectra and $k$-way Cheeger and dual Cheeger constants on trees (i.e. graphs without cycles) and cycles. In particular, we discuss the following kind of inequalities,
\begin{equation}\label{goals}
 \lambda_k\geq C_1(k)h(k)^2\text{  and  } 2-\lambda_{N-k+1}\geq C_2(k) (1-\overline{h}(k))^2.
\end{equation}
It is proved by Miclo \cite{Miclo2008} and Daneshgar-Javadi-Miclo \cite{DJM2012} that for trees, $C_1(k)$ can be $\frac{1}{2}$, and for cycles it can be
\begin{equation}\label{DJMconstants}
C_1(k)=\left\{
           \begin{array}{ll}
             \frac{1}{2}, & \hbox{if $k=1$ or $k$ is even;} \\
             \frac{1}{48}, & \hbox{if $k\geq 3$ is odd.}
           \end{array}
         \right.
\end{equation}
That is, for those special classes of graphs, $C_1(k)$ is even independent of $k$. (Of course the case $k=1$ is trivial and we list it here just for completeness.) Recalling (\ref{NiceDuality}) in Remark \ref{rmkDualityBipartite}, we can take $C_2(k)=C_1(k)$ to be independent of $k$ for even cycles and trees since they are all bipartite.

If we replace $\overline{h}(k)$ in (\ref{goals}) by $\overline{h}(k-1)$, we can prove the following result for cycles.

\begin{thm}\label{thmCycles}
For any cycle $\mathcal{C}_N$, we have for every $1\leq k\leq N$
\begin{equation}
2-\lambda_{N-k+1}\geq C_3(1-\overline{h}(k-1))^2,
\end{equation}
with the notation $\overline{h}(0):=\overline{h}(1)$ and $$C_3=\left\{
            \begin{array}{ll}
              \frac{1}{2}, & \hbox{if $N-k+1$ is odd or $N-k+1=N-1, N$;} \\
              \frac{1}{48}, & \hbox{if $N-k+1\leq N-2$ is even.}
            \end{array}
          \right.
$$
\end{thm}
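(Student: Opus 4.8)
The plan is to reduce the inequality, in both parities of $N$, to the higher-order Cheeger inequality of Daneshgar--Javadi--Miclo for cycles (i.e. $\lambda_j(\mathcal{C}_N)\geq C_1(j)h(j)^2$ with $C_1(j)$ as in (\ref{DJMconstants})) together with the bipartiteness relations of Section \ref{sectionRelations}. Throughout write $m:=N-k+1$, so that the left-hand side is $2-\lambda_m$ and the case division for $C_3$ is governed by $m$.

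\emph{The case $N$ even.} Now $\mathcal{C}_N$ is bipartite, so by (\ref{NiceDuality}) both $\lambda_k+\lambda_m=2$ and $h(k)+\overline{h}(k)=1$. Hence
$$2-\lambda_m=\lambda_k\geq C_1(k)\,h(k)^2=C_1(k)\,(1-\overline{h}(k))^2\geq C_1(k)\,(1-\overline{h}(k-1))^2,$$
the last inequality using $\overline{h}(k-1)\geq\overline{h}(k)$. A short parity count (for $N$ even, $k$ and $m$ have opposite parity) shows $C_1(k)$ equals the stated $C_3$ in each case; the exceptional indices $k=1,2$ (that is $m=N,N-1$) cause no trouble since $C_1(1)=C_1(2)=\tfrac12$, and for $k=1$ the claim is exactly the Bauer--Jost dual Cheeger inequality (\ref{BJdualCheeger}) under the convention $\overline{h}(0)=\overline{h}(1)$.

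\emph{The case $N$ odd}, which carries the actual content since $\mathcal{C}_N$ is no longer bipartite. The key fact I would isolate is the spectral inequality $\lambda_{k-1}(\mathcal{C}_N)+\lambda_{N-k+1}(\mathcal{C}_N)\leq2$ for $2\leq k\leq N$. To prove it, relabel the vertices as $0,1,\dots,N-1$ and let $\widetilde{\Delta}$ be obtained from $\Delta$ by reversing the sign of the weight on the edge $\{N-1,0\}$. First, $\widetilde{\Delta}-\Delta$ is a self-adjoint operator on $l^2(V,\mu)$ given by an off-diagonal $2\times2$ block coupling only the coordinates $N-1$ and $0$, hence of rank $2$ with one positive and one negative eigenvalue; so the interlacing inequalities for rank-$2$ perturbations (if $R$ has $r_+$ positive and $r_-$ negative eigenvalues then $\lambda_{j-r_-}(A)\leq\lambda_j(A+R)\leq\lambda_{j+r_+}(A)$) give $\lambda_{j-1}(\Delta)\leq\lambda_j(\widetilde{\Delta})$ for $2\leq j\leq N$. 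Second, conjugating $\widetilde{\Delta}$ by the diagonal orthogonal operator $\mathrm{diag}((-1)^{i})_{i=0}^{N-1}$ turns \emph{every} edge sign of the cycle into $-1$ --- the closing edge works out precisely because $(-1)^{N-1}=1$ for odd $N$ --- so $\widetilde{\Delta}$ is unitarily equivalent to $I+P$ and therefore $\lambda_j(\widetilde{\Delta})=2-\lambda_{N-j+1}(\Delta)$. Taking $j=k$ and combining proves the inequality. Granting it, for $3\leq k\leq N$ we get $2-\lambda_m\geq\lambda_{k-1}$; Proposition \ref{proOddcycle} gives $1-\overline{h}(k-1)=h(k-1)$ (valid since $k-1\geq2$), and the Daneshgar--Javadi--Miclo bound $\lambda_{k-1}\geq C_1(k-1)h(k-1)^2$ then yields $2-\lambda_m\geq C_1(k-1)(1-\overline{h}(k-1))^2$, with $C_1(k-1)=C_3$ after a parity check (for $N$ odd, $k-1$ and $m$ have opposite parity). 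The remaining indices $k=1,2$, where Proposition \ref{proOddcycle} is unavailable at $k-1$, follow from eigenvalue monotonicity and (\ref{BJdualCheeger}): $2-\lambda_m\geq 2-\lambda_N\geq\tfrac12(1-\overline{h}(1))^2$, matching $C_3=\tfrac12$ in those two cases.

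The only genuine obstacle I foresee is this odd-cycle inequality: one must make the ``frustrated cycle'' observation precise --- that reversing a single edge weight on an odd cycle is unitarily equivalent to the signless operator $I+P$ --- and correctly apply rank-$2$ eigenvalue interlacing in the $\mu$-weighted Hilbert space. Everything else is an assembly of the duality relations of Section \ref{sectionRelations}, the Daneshgar--Javadi--Miclo theorem, and the Bauer--Jost dual Cheeger inequality; the one persistent nuisance is keeping the parity conventions aligned so that the constant produced by $C_1(\cdot)$ lands exactly on the case division for $C_3$ in the statement.
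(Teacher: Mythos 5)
Your proposal is correct, and the assembly (Proposition \ref{proOddcycle}, the Daneshgar--Javadi--Miclo constants (\ref{DJMconstants}), monotonicity of $\overline{h}$, Bauer--Jost for $k=1,2$, and the parity bookkeeping matching $C_1(\cdot)$ to $C_3$) coincides with the paper's. Where you genuinely diverge is in the key spectral inequality $2-\lambda_{N-k+1}(\mathcal{C}_N)\geq\lambda_{k-1}(\mathcal{C}_N)$ for odd $N$: the paper obtains it by \emph{contracting} one edge to get the bipartite cycle $\mathcal{C}_{N-1}$, proving $\lambda'_j\geq\lambda_j$ by a restricted Courant--Fischer argument (an interlacing idea in the spirit of Butler and Horak--Jost, adapted because the contracted graph keeps the original weights), and then invoking the spectral symmetry $\lambda'_{j}+\lambda'_{N-j}=2$ of the even cycle; you instead flip the sign of a single edge weight, identify the resulting ``frustrated'' operator with $I+P$ by the diagonal $\pm1$ gauge transformation (which closes up precisely because the cycle is odd), and compare it to $\Delta$ by rank-two perturbation interlacing in $l^2(V,\mu)$ -- your computation that the perturbation has quadratic form $4w_{N-1,0}f(0)f(N-1)$, hence signature $(1,1)$, is what makes the step $\lambda_{j-1}(\Delta)\leq\lambda_j(\widetilde{\Delta})$ legitimate, and the min--max argument is indifferent to the weighted inner product, so the point you flagged as the ``only genuine obstacle'' is in fact fine. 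The trade-off: the paper's route stays entirely within nonnegatively weighted graphs and a single interlacing claim it proves by hand, whereas your switching/signed-Laplacian route avoids constructing an auxiliary graph and explains the inequality as a one-edge frustration phenomenon; both give exactly the shift by one index and the same constants, and your explicit treatment of the even-$N$ case (via $\overline{h}(k-1)\geq\overline{h}(k)$) just spells out what the paper leaves to its preceding remark.
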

As commented above, we only need to prove the theorem for odd cycles, for which we need the following observation.
\begin{pro}\label{proOddcycle}
For odd cycles, we have
\begin{equation}
 h(k)+\overline{h}(k)=1, \text{ for }2\leq k\leq N.
\end{equation}
\end{pro}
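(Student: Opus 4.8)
The plan is to prove $\overline{h}(k)\geq 1-h(k)$ for every $2\leq k\leq N$; combined with the reverse inequality \eqref{connectionwithDual} of Proposition \ref{pro1}, this gives $h(k)+\overline{h}(k)=1$. The only input specific to cycles is the elementary observation that \emph{every proper induced subgraph of $\mathcal{C}_N$ is bipartite}: deleting at least one vertex from a cycle leaves a disjoint union of paths.

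So I would start with an arbitrary $k$-subpartition $S_1,\ldots,S_k$ of $V$. Since $k\geq 2$ and the $S_i$ are non-empty and mutually disjoint, each $S_i$ is a proper subset of $V$, so the subgraph induced on $S_i$ is bipartite; fix a $2$-coloring $S_i=A_i\cup B_i$ with no monochromatic edge inside $S_i$. Then $\{(A_i,B_i)\}_{i=1}^k\in \text{Pair}(k)$: indeed $A_i\cup B_i=S_i\neq\emptyset$, and the pairs are mutually disjoint because the $S_i$ are.

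Next I would compute $\overline{\phi}(A_i,B_i)$. Since the coloring is proper, $|E(A_i,A_i)|=|E(B_i,B_i)|=0$, so the volume decomposition \eqref{decomp of volumen} reduces to $\text{vol}(S_i)=2|E(A_i,B_i)|+|E(S_i,\overline{S_i})|$, and hence
\[
\overline{\phi}(A_i,B_i)=\frac{2|E(A_i,B_i)|}{\text{vol}(S_i)}=1-\frac{|E(S_i,\overline{S_i})|}{\text{vol}(S_i)}=1-\phi(S_i).
\]
By the definition \eqref{Multiway dual Cheeger constant} of $\overline{h}(k)$ it follows that
\[
\overline{h}(k)\geq \min_{1\leq i\leq k}\overline{\phi}(A_i,B_i)=1-\max_{1\leq i\leq k}\phi(S_i).
\]
Taking the infimum of the right-hand side over all $k$-subpartitions and recalling \eqref{Multiway Cheeger constant} gives $\overline{h}(k)\geq 1-h(k)$, which completes the argument.

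I do not expect a genuine obstacle. The two points that deserve a word of care are the bipartiteness of proper induced subgraphs of a cycle — which is precisely where the hypothesis $k\geq 2$ enters, since for $k=1$ one may be forced to take $S_1=V$ and an odd cycle is not bipartite — and the routine verification that $\{(A_i,B_i)\}_{i=1}^k$ is a legitimate $k$-sub-bipartition. The argument uses no parity assumption on $N$; for even cycles the identity already holds at $k=1$ via Proposition \ref{pro1}(ii), so the statement is restricted to odd cycles only because that is where excluding $k=1$ is genuinely needed.
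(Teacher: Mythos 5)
Your argument is correct and is essentially the paper's own proof: both rest on the observation that any proper subset of the cycle's vertex set (forced by $k\geq 2$) induces a disjoint union of paths, hence admits a bipartition with no internal edges, which turns the inequality $\overline{h}(k)\leq 1-h(k)$ from Proposition \ref{pro1} into an equality. The only difference is presentational — you spell out the computation $\overline{\phi}(A_i,B_i)=1-\phi(S_i)$ directly instead of citing equality in (\ref{abovecalculation}).
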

\begin{proof}
 Observe the fact that any proper subset $S$ of $V$ possesses a bipartition $V_1\cup V_2=S$ such that $|E(V_1, V_1)|=|E(V_2, V_2)|=0$. Therefore, the inequality (\ref{abovecalculation}) in the proof of  Proposition \ref{pro1} (\textrm{ii}) is in fact an equality, proving $h(k)+\overline{h}(k)=1$ for any $2\leq k\leq N$.
\end{proof}
\begin{proof}[Proof of Theorem \ref{thmCycles}]
Let $\mathcal{C}_{N-1}$ be a cycle obtained from $\mathcal{C}_N$ by contracting one edge $\{u_0,v_0\}$, i.e., by removing $\{u_0, v_0\}$ from the edge set of $\mathcal{C}_N$ and identifying vertices $u_0, v_0$ to be one vertex renamed as $\eta$ in $\mathcal{C}_{N-1}$. Let us use a prime to indicate quantities of the new graph $\mathcal{C}_{N-1}$. Then we have $d'_{\eta}=d_{u_0}+d_{v_0}-2w_{u_0v_0}$ and for $u_1\sim u_0\sim v_0\sim v_1$ in $\mathcal{C}_{N}$, $w'_{\eta u_1}=w_{u_0u_1}$, $w_{\eta v_1}=w_{v_0 v_1}$.

Now by an interlacing idea in Butler \cite{Butler2007}, we claim \begin{equation}
 \lambda'_k\geq  \lambda_k, \text{ for }1\leq k\leq N-1.
\end{equation}
(Note that the interlacing results for the so-called weak coverings in \cite{Butler2007} do not apply to our case since different weights and vertex degrees of the new graph $\mathcal{C}_{N-1}$ are chosen there. See also the general result Theorem 4.1 for contracting operations in Horak-Jost \cite{HJ2013} which works for unweighted graphs.) Indeed, by (\ref{keigenvalue2}) we have
\begin{equation*}
 \lambda'_k=\max_{\mathcal{F}^{N-k}\subseteq \mathbb{R}^{N-1}}\min_{f'\in \mathcal{F}^{N-k}, f'\not\equiv 0}\mathcal{R}'(f'),
\end{equation*}
where the maximum is taken over all possible $N-k$ dimensional subspaces $\mathcal{F}^{N-k}$ of $\mathbb{R}^{N-1}$. Given $f'$, we define a new function $f$ on $\mathcal{C}_N$ as
\begin{equation*}
 f(u)=\left\{
        \begin{array}{ll}
          f'(\eta), & \hbox{if $u=u_0$ or $v_0$;} \\
          f'(u), & \hbox{otherwise.}
        \end{array}
      \right.
\end{equation*}
This satisfies
\begin{equation*}
 \mathcal{R}'(f')=\frac{\sum_{u\sim v}w_{uv}(f(u)-f(v))^2}{\sum_u d_uf^2(u)-2w_{u_0v_0}(f'(\eta))^2}\geq \mathcal{R}(f).
\end{equation*}
Hence we have
\begin{align*}
\lambda'_k&= \max_{\mathcal{F}^{N-k+1}\subseteq \mathbb{R}^{N}}\min_{\substack{f\in \mathcal{F}^{N-k+1}, f\not\equiv 0\\f(u_0)=f(v_0)}}\frac{\sum_{u\sim v}w_{uv}(f(u)-f(v))^2}{\sum_u d_uf^2(u)-2w_{u_0v_0}(f(u_0))^2}\\&\geq \max_{\mathcal{F}^{N-k+1}\subseteq \mathbb{R}^{N}}\min_{f\in \mathcal{F}^{N-k+1}, f\not\equiv 0}\mathcal{R}(f)=\lambda_k.
\end{align*}
Using this and the bipartiteness of $\mathcal{C}_{N-1}$ for odd $N$, we obtain for $k\geq 2$,
\begin{equation}
 2-\lambda_{N-k+1}\geq 2-\lambda'_{N-k+1}=2-\lambda'_{(N-1)-(k-1)+1}=\lambda'_{k-1}\geq \lambda_{k-1}.
\end{equation}
By the result (\ref{DJMconstants}) of \cite{DJM2012} and Proposition \ref{proOddcycle}, we prove the theorem for odd $N$ and $k\geq 3$. For $k=1,2$, by the dual Cheeger inequality (\ref{BJdualCheeger}), $2-\lambda_{N-1}\geq 2-\lambda_{N}\geq \frac{1}{2}(1-\overline{h}(1))^2$.
\end{proof}

In the following, we consider the special class of unweighted cycles as an example. We will see that the constants in (\ref{DJMconstants}) can be better for unweighted cycles and $C_2(k)$ can also be independent of $k$ for unweighted odd cycles.

\begin{pro}
For an unweighted cycle, we have $h(1)=0$ and
\begin{equation*}
\overline{h}(1)=\left\{
                                                 \begin{array}{ll}
                                                  \frac{N-1}{N}, & \hbox{if $N$ is odd;} \\
                                                   1, & \hbox{if $N$ is even,}
                                                 \end{array}
                                               \right.
h(k)=1-\overline{h}(k)=\frac{1}{\left\lfloor\frac{N}{k}\right\rfloor}, \text{ for }2\leq k\leq N.
\end{equation*}
\end{pro}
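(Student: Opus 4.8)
The plan is to verify the four claimed identities directly from the combinatorial definitions, since $\mathcal{C}_N$ is simple enough that no spectral input is needed. The identity $h(1)=0$ is immediate: taking the single set $S_1=V$ gives $\phi(V)=0$.

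For $\overline{h}(1)$ I would argue by the parity of $N$. When $N$ is even, $\mathcal{C}_N$ is connected and bipartite, so $\overline{h}(1)=1$ by Proposition \ref{pro1} (\textrm{i}). When $N$ is odd, I would show that among all pairs $(V_1,V_2)$ the optimal choice is $V_1\cup V_2=V$ together with a maximum-cut $2$-colouring. The ingredients are: every vertex of $\mathcal{C}_N$ has degree $2$, so $\text{vol}(V_1\cup V_2)=2|V_1\cup V_2|$; the subgraph induced on a proper subset $S\subsetneq V$ is a disjoint union of paths, hence bipartite with at most $|S|-1$ edges, so $\overline{\phi}(V_1,V_2)\le\frac{|S|-1}{|S|}\le\frac{N-2}{N-1}$; and the maximum cut of an odd cycle equals $N-1$ (any $2$-colouring leaves at least one monochromatic edge, and $N-1$ is attained by colouring the vertices alternately around the cycle), giving $\overline{\phi}=\frac{2(N-1)}{2N}=\frac{N-1}{N}$ when $S=V$. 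A one-line comparison shows $\frac{N-2}{N-1}<\frac{N-1}{N}$, so the maximum equals $\frac{N-1}{N}$.

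For $h(k)$ with $2\le k\le N$ I would bound $\phi(S_i)$ below in terms of $|S_i|$. Writing $\ell_i=|S_i|$ and letting $b_i\ge1$ be the number of maximal arcs (blocks) of $S_i$ on the cycle, the hypothesis $k\ge2$ forbids any $S_i$ from being all of $V$, so each block contributes exactly two distinct boundary edges, $|E(S_i,\overline{S_i})|=2b_i$, and hence $\phi(S_i)=\frac{b_i}{\ell_i}\ge\frac{1}{\ell_i}$. Since the $S_i$ are disjoint, $\sum_i\ell_i\le N$, so $\min_i\ell_i\le\lfloor N/k\rfloor$ and $\max_i\phi(S_i)\ge\frac{1}{\lfloor N/k\rfloor}$. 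The matching upper bound comes from partitioning the $N$ vertices into $k$ consecutive arcs of lengths as equal as possible, each of length at least $\lfloor N/k\rfloor$, for which every $\phi(S_i)=\frac{1}{\ell_i}\le\frac{1}{\lfloor N/k\rfloor}$. This establishes $h(k)=\frac{1}{\lfloor N/k\rfloor}$.

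Finally, the value $\overline{h}(k)=1-\frac{1}{\lfloor N/k\rfloor}$ follows from the identity $h(k)+\overline{h}(k)=1$ for $2\le k\le N$: for even $N$ this is Proposition \ref{pro1} (\textrm{ii}) (the cycle being bipartite), and for odd $N$ it is Proposition \ref{proOddcycle}. The main obstacles are bookkeeping ones: carefully justifying the boundary-edge count $|E(S_i,\overline{S_i})|=2b_i$ (which relies on maximality of the blocks together with $S_i\neq V$), and, in the odd case of $\overline{h}(1)$, ruling out that some configuration supported on a proper subset could beat the full vertex set.
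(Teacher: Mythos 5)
Your proof is correct, and it follows the same overall route as the paper: the values of $\overline{h}(k)$ for $2\le k\le N$ are obtained from $h(k)$ via the duality $h(k)+\overline{h}(k)=1$ (Proposition \ref{pro1}~(\textrm{ii}) for even $N$, Proposition \ref{proOddcycle} for odd $N$), and $h(k)$ is computed by elementary combinatorics of arcs on the cycle. The only difference is in the lower bound for $h(k)$: the paper first reduces to an optimal subpartition in which every $S_i$ is connected and $V^*=\emptyset$ (replacing each $S_i$ by its best connected component and absorbing leftover vertices), and then reads off $\phi(S_i)=1/\sharp S_i$, whereas you bound directly, counting $|E(S_i,\overline{S_i})|=2b_i$ in terms of the number $b_i$ of maximal blocks and finishing with a pigeonhole on the sizes $\ell_i$; both give the same sharp bound, and your variant avoids the reduction step at the cost of the block-counting bookkeeping. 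You also spell out the odd-$N$ computation of $\overline{h}(1)$ (proper subsets give at most $\tfrac{N-2}{N-1}$, the full vertex set with a maximum cut gives $\tfrac{N-1}{N}$), which the paper leaves as ``similar arguments''; your details are sound.
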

\begin{proof}
For $2\leq k\leq N$, we only need to calculate $h(k)$ since we always have $h(k)=1-\overline{h}(k)$.

Let $\{S_i\}_{i=1}^k$ be the $k$-subpartition of a cycle achieving $h(k)$. Then we can always suppose that $V^*=V\setminus\bigcup_{i=1}^kS_i=\emptyset$ and every $S_i$ is connected, since otherwise we can construct $k$ connected partitions from them without increasing their expansions as follows.
For every $i$, if $S_i$ has connected components $\{S_i^j\}_{j=1}^t$ for some natural number $t\geq 2$, we replace $S_i$ by $S_i^{j_1}\in \{S_i^j\}_{j=1}^t$ which has the minimal expansion. Note that
\begin{equation*}
 \phi(S_{i}^{j_1})=\min_{1\leq j\leq t}\phi(S_{i}^j)\leq\frac{\sum_{j=1}^t|E(S_{i}^j, \overline{S_{i}^j})|}{\sum_{j=1}^t\text{vol}(S_{i}^j)}=\phi(S_{i}).
\end{equation*}
For the new $k$-subpartition $\{S_i\}_{i=1}^k$, if $V^*\neq \emptyset$, we can combine each connected component of $V^*$ with one of its adjacent subsets in $\{S_i\}_{i=1}^k$. Since the boundary measure is unchanged while the volume increases, we again do not increase their expansion in this process.

%
%

In an unweighted cycle for any connected non-empty proper subsets $S_i\subseteq V$, we have $\phi(S_i)=\frac{2}{2\sharp S_i}$, where $\sharp S_i$ represents the number of vertices in $S_i$. Then it is straightforward to obtain
\begin{equation*}
 h(k)=\min_{S_1, \ldots, S_k}\max_{1\leq i\leq k}\frac{1}{\sharp S_i}=\frac{1}{\left\lfloor\frac{N}{k}\right\rfloor}.
\end{equation*}
By similar arguments for odd $N$, we can prove $\overline{h}(1)=\frac{N-1}{N}$.
\end{proof}

It is known that the eigenvalues of an unweighted cycle (see e.g. Example 1.5 in \cite{Chung}), listed in an increasing order, are  $$\lambda_k=1-\cos \left(\frac{2\pi}{N}\left\lfloor\frac{k}{2}\right\rfloor\right), k=1,2,\ldots, N.$$ It is then straightforward to check that
\begin{equation*}
 2-\lambda_{N-k+1}=\left\{
             \begin{array}{ll}
               1-\cos\frac{(k-1)\pi}{N}, & \hbox{if $N-k+1$ is even;} \\
               1-\cos\frac{k\pi}{N}, & \hbox{if $N-k+1$ is odd.}
             \end{array}
           \right.
\end{equation*}
\begin{pro}\label{proUnweighted cycle}
For an unweighted cycle, we have for every $1\leq k\leq N$,
\begin{align}
 C_1h(k)^2\leq &\lambda_k\leq\frac{\pi^2}{2}h(k)^2, \label{exampleHCI}\\
C_2(1-\overline{h}(k))^2\leq 2-&\lambda_{N-k+1}\leq\frac{\pi^2}{2}(1-\overline{h}(k))^2,\label{exampleHDCI}
\end{align}
where\begin{equation*}C_1=\left\{
                                                                        \begin{array}{ll}
                                                                          1, & \hbox{if $k$ is even;} \\
                                                                          \frac{\pi}{9}, & \hbox{if $k$ is odd,}
                                                                        \end{array}
                                                                      \right.
\text{ and } C_2=\left\{
                                                                        \begin{array}{ll}
                                                                          \frac{\pi}{9}, & \hbox{if $N-k+1$ is even;} \\
                                                                          1, & \hbox{if $N-k+1$ is odd.}
                                                                        \end{array}
                                                                      \right.
\end{equation*}
\end{pro}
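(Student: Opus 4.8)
The plan is to reduce everything to explicit trigonometric estimates, since for an unweighted cycle all the relevant quantities are known in closed form. From the proposition immediately preceding the statement we have $h(k)=1/\lfloor N/k\rfloor$ for $2\le k\le N$, and $1-\overline{h}(k)$ equals the same quantity; we also have the displayed formulas for $\lambda_k=1-\cos(\frac{2\pi}{N}\lfloor k/2\rfloor)$ and for $2-\lambda_{N-k+1}$ broken into the cases $N-k+1$ even versus odd. So proving \eqref{exampleHCI} and \eqref{exampleHDCI} amounts to sandwiching $1-\cos(\pi m/N)$ between $C\,(1/\lfloor N/k\rfloor)^2$ and $\frac{\pi^2}{2}(1/\lfloor N/k\rfloor)^2$ for the appropriate integer $m$ (either $k-1$ or $k$, depending on parity), uniformly in $N$ and $k$.

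First I would record the two elementary one-variable inequalities for $1-\cos\theta$: the upper bound $1-\cos\theta\le \theta^2/2$ valid for all $\theta$, and a matching lower bound. For the lower bound one cannot use $1-\cos\theta\ge \theta^2/2-\theta^4/24$ blindly since $\theta$ here can be as large as close to $\pi$; instead, on the range $\theta\in[0,\pi]$ one has the clean bound $1-\cos\theta\ge \frac{2}{\pi^2}\theta^2$ (this is just concavity of $1-\cos$ on $[0,\pi]$, i.e. the chord lies below the graph, giving $1-\cos\theta\ge \frac{2}{\pi}\cdot\frac{\theta}{\pi}\cdot\pi$... more carefully, $1-\cos\theta\ge \frac{2}{\pi}\theta$ is false, so I will use the standard Jordan-type estimate $1-\cos\theta\ge \frac{\theta^2}{2}\cdot\frac{2}{\pi^2/2}$). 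Concretely I would prove $1-\cos\theta\ge \frac{\theta^2}{\pi^2/2}$ on $[0,\pi]$ by checking the function $g(\theta)=(1-\cos\theta)-\frac{2}{\pi^2}\theta^2$ is nonnegative (it vanishes at $0$ and $\pi$ and $g$ is concave–then–convex, or just verify via the second derivative). These two bounds handle the upper estimate in both \eqref{exampleHCI} and \eqref{exampleHDCI} with constant $\pi^2/2$ directly once we check $\pi m/N \le \pi$, which is automatic since $m\le k\le N$.

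Next I would handle the lower bounds, which is where the case split on the parity of $k$ (resp. $N-k+1$) and the constant $\pi/9$ versus $1$ enters. When $m$ is ``even-like'' (the case giving constant $1$), the relevant angle is $\pi m/N$ with $m$ comparable to $k$, and $\lfloor N/k\rfloor$ is close to $N/k$, so $\frac{2}{\pi^2}(\pi m/N)^2 = \frac{2m^2}{N^2}$ should beat $(1/\lfloor N/k\rfloor)^2 = (k/N)^2$ up to the stated constant after comparing $m$ to $k$ and $\lfloor N/k\rfloor$ to $N/k$; one must be careful that $\lfloor N/k\rfloor$ can be as small as $1$ and that $m$ can equal $k-1$ rather than $k$, which is exactly where a genuine loss (the $\pi/9$) appears — roughly, $\frac{k-1}{k}\ge \frac{1}{2}$ only fails to be close to $1$ for small $k$, and the factor $\frac{2}{\pi^2}$ together with $(\frac{k-1}{k})^2\ge$ a constant and $\lfloor N/k\rfloor \le N/k$ compound to produce $\pi/9$. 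I would just carefully chase the worst case (small $k$, $N$ near a multiple of $k$ or just above) in each parity branch.

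The main obstacle I expect is not any deep idea but bookkeeping: matching the integer $m\in\{k-1,k\}$ dictated by the parity of $N-k+1$ against the floor $\lfloor N/k\rfloor$ and verifying the constants $1$ and $\pi/9$ are actually achieved (or at least valid) in every residue class of $N$ mod $k$, including the degenerate extremes $k=N$ (where $\lfloor N/k\rfloor=1$) and $k$ slightly larger than $N/2$. I would organize this by writing $N = q k + s$ with $0\le s<k$, so $\lfloor N/k\rfloor = q$, and then the claim becomes a clean inequality between $1-\cos(\pi m/(qk+s))$ and $C/q^2$; I would verify it for $q=1$ by hand (finitely many trigonometric evaluations up to monotonicity) and for $q\ge 2$ by the asymptotic comparison above, noting $\frac{m}{qk+s}\ge \frac{1}{2q}\cdot\frac{m}{k}$ or similar and applying the Jordan bound. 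For $k=1$ the statement reduces to the known classical Cheeger and dual Cheeger inequalities for the cycle (with $h(1)=0$ trivially giving the left inequality of \eqref{exampleHCI}, and $1-\overline{h}(1)=1/N$ for odd $N$, $0$ for even $N$, matched against $2-\lambda_N = 1-\cos(\pi/N)\cdot[\text{parity}]$), which I would dispatch separately just as Theorem~\ref{thmCycles} does for its $k=1,2$ cases.
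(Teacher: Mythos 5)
Your overall strategy coincides with the paper's: plug in the closed formulas $h(k)=1-\overline h(k)=1/\lfloor N/k\rfloor$ and $\lambda_k=1-\cos(\frac{2\pi}{N}\lfloor k/2\rfloor)$, reduce both \eqref{exampleHCI} and \eqref{exampleHDCI} to sandwiching $1-\cos(\pi m/N)$ with $m\in\{k-1,k\}$, use $1-\cos\theta\le\theta^2/2$ for the upper bounds, and a Jordan-type lower bound plus a comparison of $\lfloor N/k\rfloor$ with $N/k$ for the lower bounds, treating the degenerate range $k>\frac N2$ (where $\overline h(k)=0$) separately. The upper-bound half and the $k=1$ case are fine as you describe them.

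The quantitative part of your plan, however, does not deliver the stated constants as written. Your lower bound $1-\cos\theta\ge\frac{2}{\pi^2}\theta^2$ on $[0,\pi]$ is correct but too weak: combined with your comparison $\frac{m}{N}\ge\frac{1}{2q}\cdot\frac{m}{k}$ (writing $q=\lfloor N/k\rfloor$), the best you get in the branch $m=k$ is $1-\cos\frac{k\pi}{N}\ge\frac{1}{2q^2}$, which falls short of the claimed $C_1=C_2=1$; even with the sharper floor comparison $\frac Nk\le\frac32 q$ it only gives $\frac{8}{9q^2}$. Likewise in the $m=k-1$ branch your chain gives roughly $\frac{1}{2(q+1)^2}$, which beats $\frac{\pi}{9q^2}$ only for $q\ge 6$, so your claim that ``$q\ge2$ follows from the asymptotic comparison'' is false at $q=2$ (constant $1$ branch) and at $q\in\{2,\dots,5\}$ (constant $\pi/9$ branch); those cases would have to be checked by hand in addition to $q=1$. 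The clean fix, and what the paper actually does, is to use the stronger pointwise bound $1-\cos x\ge\frac{x^2}{\pi}$ valid on $[0,\frac\pi2]$ (constant $\frac1\pi\approx0.318$ instead of $\frac{2}{\pi^2}\approx0.203$), together with $\frac Nk\le\frac32\lfloor N/k\rfloor$ (resp. $\frac{N}{k-1}\le\frac{2N}{k}\le3\lfloor N/k\rfloor$) for $k\le\frac N2$, which yields $\frac{4\pi}{9q^2}\ge\frac1{q^2}$ and $\frac{\pi}{9q^2}$ in the two branches in one stroke; the remaining range $k>\frac N2$ (where the angle exceeds $\frac\pi2$) is handled directly via $\overline h(k)=0$ and $1-\cos\frac{k\pi}{N}>1$, plus the single evaluation $1-\cos\frac{(N-1)\pi}{2N}\ge1-\cos\frac\pi3>\frac\pi9$ when $N$ is odd and $k=\frac{N+1}2$. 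So your route can be completed, but only after either upgrading the Jordan bound or enlarging the list of small-$q$ cases verified directly; as sketched, the constants $1$ and $\pi/9$ are not reached.
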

\begin{proof}
Recall the following basic inequalities
\begin{equation*}
 1-\cos x\leq \frac{x^2}{2}, \,\,\forall\, x\geq 0, \,\,1-\cos x\geq\frac{x^2}{\pi}, \,\,\forall\, 0\leq x\leq \frac{\pi}{2}.
\end{equation*}
We only need to consider $k\geq 2$. When $N-k+1$ is odd, we have
$$2-\lambda_{N-k+1}=1-\cos\frac{k\pi}{N}\leq\frac{\pi^2k^2}{2N^2}\leq \frac{\pi^2}{2}\left(\frac{1}{\left\lfloor\frac{N}{k}\right\rfloor}\right)^2=\frac{\pi^2}{2}(1-\overline{h}(k))^2.$$
If $k> \frac{N}{2}$, we have $\overline{h}(k)=0$, and therefore $2-\lambda_{N-k+1}>1=(1-\overline{h}(k))^2$. In the case $k\leq \frac{N}{2}$, we have $\frac{N}{k}\leq\left\lfloor\frac{N}{k}\right\rfloor+1\leq\frac{3}{2}\left\lfloor\frac{k}{N}\right\rfloor$, and
\begin{equation*}
 2-\lambda_{N-k+1}\geq\frac{\pi k^2}{N^2}\geq\frac{4\pi}{9}\left(\frac{1}{\left\lfloor\frac{N}{k}\right\rfloor}\right)^2=\frac{4\pi}{9}(1-\overline{h}(k))^2.
\end{equation*}
This verifies (\ref{exampleHDCI}) for odd $N-k+1$ and in fact also (\ref{exampleHCI}) for even $k$.

When $N-k+1$ is even, we have similarly
\begin{equation*}
 2-\lambda_{N-k+1}=1-\cos\frac{(k-1)\pi}{N}\leq \frac{\pi^2}{2}(1-\overline{h}(k))^2.
\end{equation*}
If $k>\frac{N}{2}+1$, we have $2-\lambda_{N-k+1}>1=(1-\overline{h}(k))^2$. If $k\leq \frac{N}{2}$, then we have $\frac{N}{k-1}\leq \frac{2N}{k}\leq 3\left\lfloor\frac{N}{k}\right\rfloor$ and
\begin{equation*}
 2-\lambda_{N-k+1}\geq \pi\left(\frac{k-1}{N}\right)^2\geq\frac{\pi}{9}\left(\frac{1}{\left\lfloor\frac{N}{k}\right\rfloor}\right)^2=\frac{\pi}{9}(1-\overline{h}(k))^2.
\end{equation*}
 In the case $\frac{N}{2}<k\leq\frac{N}{2}+1$, we have $\overline{h}(k)=0$. Observing that $2-\lambda_{N-k+1}$ is equal to $1$ if $N$ is even, and $1-\cos\frac{(N-1)\pi}{2N}\geq 1-\cos\frac{\pi}{3}>\frac{\pi}{9}$ if $N$ is odd, we have verified (\ref{exampleHDCI}) for even $N-k+1$. This also shows (\ref{exampleHCI}) for odd $k$.
\end{proof}
\begin{remark}
 (\textrm{i}) This example shows that unweighted cycles are a class of graphs for which $\lambda_k$ and $h(k)^2$, as well as $2-\lambda_{N-k+1}$ and $(1-\overline{h}(k))^2$, are equivalent, respectively up to an absolute constant. In fact, one can easily extend this conclusion to weighted cycles with uniformly bounded weights, paying the price that the constant will then depend on the uniform weight bounds. In general, such kind of results for $\lambda_k$ and $h(k)^2$ are called Buser type inequalities. Higher order Buser inequalities for nonnegativly curved graphs are proved in \cite{LiuPeyerimhoff2014}.

(\textrm{ii}) We emphasis that this example also shows that it is possible to expect for certain classes of graphs that $C_1$, $C_2$ can be improved to be $1$ for even $k$ and odd $N-k+1$, respectively. For related discussions about the Cheeger inequality, see Chapter 5 of \cite{MT2006}.

(\textrm{iii}) In \cite{DS1991} (see Example 3.1-3.4 there), Diaconis and Stroock obtained explicit formulas for $h(2)$ of several interesting non-trivial example graphs, including odd cycles.
\end{remark}

\section{Essential spectrum of reversible Markov operators}\label{Application}

In this last section, we discuss an application of the higher-order dual Cheeger inequality (\ref{My2}) to characterize the essential spectrum of a general reversible Markov operator, in the spirit of Miclo \cite{Miclo2013} and F.-Y. Wang \cite{Wang2014}.

Let us start from extending our notations to that abstract setting.
Assume that $(X, \mathcal{F}, \mu)$ is a probability space. We define
$$L^2(X, \mu):=\{f: X\rightarrow \mathbb{R} \text{ measurable, }\int_Xf^2d\mu<+\infty\}.$$
Then let $P: L^2(X, \mu)\rightarrow L^2(X, \mu)$ be a linear operator such that
\begin{equation}\label{Markovian}
P1=1 \text{ and, } \text{ for any } f\in L^2(X, \mu),\,\, f\geq 0 \text{ implies } Pf\geq 0,
\end{equation}
where $1$ stands for the constant function taking the value one. We will also use $1_S$ for a measurable subset $S\subseteq X$ to represent the characteristic function of $S$. The operator $P$ is then called a Markov operator. We will consider a reversible (alternatively called symmetric) Markov operator $P$ with an invariant measure $\mu$. Explicitly, we require for any $f, g\in L^2(X, \mu)$,
\begin{equation}\label{invariant}
\int_Xg(x)Pf(x)d\mu(x)=\int_Xf(x)Pg(x)d\mu(x).
\end{equation}
Actually, there exists a symmetric measure $J$ on $X\times X$ (see e.g. \cite{Wang2014}), such that
\begin{equation}\label{jumpmeasure}
d\mu(x)=\int_{y\in X}J(dx,dy).
\end{equation}
Then we have for any two measurable subsets $A, B$ of $X$,
\begin{equation}\label{Jump}
J(A,B)=(1_A, P1_B)_{\mu},
\end{equation}
where the inner product notion is extended from the previous finite graph setting.
\begin{remark}
The previous weighted finite graph setting can be fitted into this general framework, see \cite{Miclo2013} for a dictionary. However, in this section we only discuss the case that $L^2(X, \mu)$ is infinite dimensional.
\end{remark}

It is known that the spectrum $\sigma(P)$ of the operator $P$ lies in $[-1, 1]$. In the following we will denote the top and bottom of the essential spectrum $\sigma_{\text{ess}}(P)$ of $P$ by
\begin{equation*}
\lambda_{\text{ess}}(P):=\sup \sigma_{\text{ess}}(P) \text{ and } \overline{\lambda}_{\text{ess}}(P):=\inf \sigma_{\text{ess}}(P).
\end{equation*}

 Miclo \cite{Miclo2013} extended the notion of multi-way Cheeger constants and the higher-order Cheeger inequalities to reversible Markov operators among many other different settings. He defines the $k$-way Cheeger constant by
\begin{equation*}
h_P(k):=\inf_{S_1, S_2, \ldots, S_k}\max_{1\leq i\leq k}\frac{(1_{S_i}, P1_{\overline{S}_i})_{\mu}}{\mu(S_i)},
\end{equation*}
where the infimum is taken over all possible $k$-subpartitions, precisely, all collections of $k$ disjoint subsets $S_1, S_2, \ldots, S_k$ such that for each $1\leq i\leq k$, $\mu(S_i)>0$. One then has the monotonicity $h_P(k)\leq h_P(k+1)$.

Adapting Miclo's approximation procedure \cite{Miclo2013}, F.-Y. Wang proved the following characterization of $\lambda_{\text{ess}}(P)$.

\begin{thm}[F.-Y. Wang \cite{Wang2014}] Let $P$ be a reversible Markov operator on $L^2(X, \mu)$. Then
\begin{equation}\label{WangCriterion}
\lambda_{\text{ess}}(P)<1 \Leftrightarrow \sup_{k\geq 1}h_P(k)>0.
\end{equation}
\end{thm}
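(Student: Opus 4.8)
The plan is to transport everything to the non-negative self-adjoint operator $\Delta:=I-P$ on $L^2(X,\mu)$ and its variational eigenvalues
\[
\lambda_k(\Delta):=\inf_{\substack{L\subseteq L^2(X,\mu)\\ \dim L=k}}\ \sup_{\substack{f\in L\\ f\neq 0}}\mathcal{R}(f),\qquad k\geq 1,
\]
where $\mathcal{R}(f)=(\Delta f,f)_\mu/\|f\|_\mu^2$ is the Rayleigh quotient, the obvious analogue of \eqref{keigenvalue}. The sequence $\lambda_k(\Delta)$ is non-decreasing and contained in $[0,2]$, and by the standard Courant--Fischer--Weyl description of the bottom of the spectrum of a bounded self-adjoint operator one has $\lim_{k\to\infty}\lambda_k(\Delta)=\inf\sigma_{\text{ess}}(\Delta)=1-\lambda_{\text{ess}}(P)$ (recall $L^2(X,\mu)$ is infinite dimensional, so $\sigma_{\text{ess}}(\Delta)\neq\emptyset$). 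Since a non-negative non-decreasing sequence with limit $0$ vanishes identically, this gives the dichotomy
\[
\lambda_{\text{ess}}(P)<1\iff \lambda_{k_0}(\Delta)>0\text{ for some }k_0,\qquad \lambda_{\text{ess}}(P)=1\iff\lambda_k(\Delta)=0\text{ for all }k.
\]
It then remains to sandwich $h_P(k)$ between $\lambda_k(\Delta)$ and a $k$-dependent multiple of $\sqrt{\lambda_k(\Delta)}$, exactly as in Theorem~\ref{TheoremLGT}.

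For the implication $\lambda_{\text{ess}}(P)<1\Rightarrow\sup_k h_P(k)>0$ I would use only the elementary bound $\lambda_k(\Delta)\leq 2h_P(k)$. Given any $k$-subpartition $S_1,\dots,S_k$, the indicators $1_{S_1},\dots,1_{S_k}$ are disjointly supported, hence span a $k$-dimensional subspace of $L^2(X,\mu)$; writing $f=\sum_i a_i 1_{S_i}$ one has the pointwise inequality $(f(x)-f(y))^2\leq 2\sum_i a_i^2(1_{S_i}(x)-1_{S_i}(y))^2$, which after integration against the symmetric jump measure $J$ (using \eqref{jumpmeasure}) and the identity $(1_S,P1_{\overline{S}})_\mu=(\Delta 1_S,1_S)_\mu$ gives $\mathcal{R}(f)\leq 2\max_i\phi_P(S_i)$ on this subspace, where $\phi_P(S):=(1_S,P1_{\overline{S}})_\mu/\mu(S)$. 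Plugging this test space into the min-max formula for $\lambda_k(\Delta)$ and taking the infimum over all $k$-subpartitions yields $\lambda_k(\Delta)\leq 2h_P(k)$ for every $k$. Choosing $k_0$ with $\lambda_{k_0}(\Delta)>0$ then forces $\sup_k h_P(k)\geq h_P(k_0)\geq \tfrac12\lambda_{k_0}(\Delta)>0$.

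For the converse $\sup_k h_P(k)>0\Rightarrow\lambda_{\text{ess}}(P)<1$ I would argue by contraposition. If $\lambda_{\text{ess}}(P)=1$ then, by the dichotomy, $\lambda_k(\Delta)=0$ for every fixed $k$; hence for each $\eta>0$ there is a $k$-dimensional $L\subseteq L^2(X,\mu)$ with $\mathcal{R}(f)\leq\eta$ for all $0\neq f\in L$. Applying the higher-order Cheeger inequality for reversible Markov operators---Miclo's extension \cite{Miclo2013} of Theorem~\ref{TheoremLGT}---to an $L^2(X,\mu)$-orthonormal basis $f_1,\dots,f_k$ of $L$ produces a $k$-subpartition with $\max_i\phi_P(S_i)\leq Ck^2\sqrt{\eta}$, so $h_P(k)\leq Ck^2\sqrt{\eta}$. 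Letting $\eta\downarrow 0$ gives $h_P(k)=0$, and as $k$ was arbitrary, $\sup_k h_P(k)=0$. This completes the proof modulo the input on the abstract higher-order Cheeger inequality.

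The main obstacle is precisely that last input: the random-partition-of-doubling-metric-spaces machinery behind Theorem~\ref{TheoremLGT} (and Sections~\ref{SectionMain}--\ref{sectionClustering}) is formulated for finite graphs, and transporting it to $(X,\mathcal{F},\mu,P)$ with $L^2(X,\mu)$ infinite dimensional is not automatic. This is where Miclo's approximation procedure \cite{Miclo2013} enters: one approximates $P$ by the finite reversible Markov chains it induces on finer and finer finite sub-$\sigma$-algebras $\mathcal{A}_n\uparrow\mathcal{F}$ (atoms $A,B$ of $\mathcal{A}_n$ carrying edge weight $J(A,B)$ and vertex measure $\mu(A)$), applies the finite inequality to each, and passes to the limit. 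The delicate point to be checked carefully is that restricting to $\mathcal{A}_n$-measurable functions simultaneously shrinks the admissible test space and modifies the operator, so one must verify that the finite-model quantities $\lambda_k$ and $h(k)$ converge to $\lambda_k(\Delta)$ and $h_P(k)$; granting this (carried out in \cite{Miclo2013}), the rest of the argument above is essentially bookkeeping.
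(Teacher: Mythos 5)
The paper does not actually prove this theorem---it is quoted from \cite{Wang2014}---but your argument is exactly the standard one behind it: the elementary test-function bound $\lambda_k(\Delta)\leq 2h_P(k)$ via disjointly supported indicators, the min--max identification $\lim_{k\to\infty}\lambda_k(\Delta)=\inf\sigma_{\text{ess}}(I-P)=1-\lambda_{\text{ess}}(P)$, and the hard bound $h_P(k)\leq Ck^2\sqrt{\lambda_k(\Delta)}$ obtained through Miclo's finite-chain approximation, which is the same scheme the paper itself follows for the dual counterpart (Theorem \ref{thmEssentialspectrum} via Theorem \ref{thmMarkovHCI}). Your proposal is therefore correct and takes essentially the same route, with the one deferred ingredient (the abstract higher-order Cheeger inequality via approximation along finite sub-$\sigma$-algebras) being precisely the input the paper likewise takes from \cite{Miclo2013} and Lemma 2.2 of \cite{Wang2014}.
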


In their spirit, we extend the $k$-way dual Cheeger constant to the present setting as follows.
\begin{equation*}
\overline{h}_P(k):=\sup_{(A_1, A_2), \ldots, (A_{2k-1}, A_{2k})}\min_{1\leq i\leq k}\frac{2(1_{A_{2i-1}}, P1_{A_{2i}})_{\mu}}{\mu(A_{2i-1}\cup A_{2i})},
\end{equation*}
where the supremum is taken over all possible $k$-sub-bipartitions of $X$, precisely, all collections of $k$ pairs of subsets, $(A_1, A_2), \ldots, (A_{2k-1}, A_{2k})$, where for any $1\leq p\neq q\leq 2k$, $A_p$ and $A_q$ are disjoint and for each $1\leq i\leq k$, $\mu(A_{2i-1}\cup A_{2i})>0$.
Accordingly, we have $\overline{h}_P(k)\geq \overline{h}_P(k+1)$.

Then we have the following relations between $h_P(k)$ and $\overline{h}_P(k)$, extending the previous Proposition \ref{pro1}.
\begin{pro}\label{proMarkovCheeger}
For any $k\geq 1$,
\begin{equation*}
h_P(k)+\overline{h}_P(k)\leq 1.
\end{equation*}
\end{pro}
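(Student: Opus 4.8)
The plan is to transcribe the proof of the inequality $\overline{h}(k)\leq 1-h(k)$ from Proposition \ref{pro1}, replacing the combinatorial volume decomposition (\ref{decomp of volumen}) by its operator-theoretic analogue. Fix $k\geq 1$ and an arbitrary $k$-sub-bipartition $(A_1,A_2),\ldots,(A_{2k-1},A_{2k})$ of $X$, and put $S_i:=A_{2i-1}\cup A_{2i}$ for each $1\leq i\leq k$. Since the $A_p$ are pairwise disjoint, so are the $S_i$, and $\mu(S_i)>0$ by definition, so $\{S_i\}_{i=1}^k$ is an admissible $k$-subpartition of $X$; hence by the definition of $h_P(k)$ we have $h_P(k)\leq\max_{1\leq i\leq k}(1_{S_i},P1_{\overline{S}_i})_\mu/\mu(S_i)$.

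The key identity I would establish is
\begin{equation*}
\mu(S_i)=2(1_{A_{2i-1}},P1_{A_{2i}})_\mu+(1_{A_{2i-1}},P1_{A_{2i-1}})_\mu+(1_{A_{2i}},P1_{A_{2i}})_\mu+(1_{S_i},P1_{\overline{S}_i})_\mu.
\end{equation*}
To obtain it, write $\mu(S_i)=(1_{S_i},1)_\mu=(1_{S_i},P1)_\mu$ using $P1=1$, expand $1=1_{S_i}+1_{\overline{S}_i}$, and expand $1_{S_i}=1_{A_{2i-1}}+1_{A_{2i}}$ inside the term $(1_{S_i},P1_{S_i})_\mu$, using reversibility (\ref{invariant}) to merge the two cross terms into $2(1_{A_{2i-1}},P1_{A_{2i}})_\mu$. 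This is exactly (\ref{decomp of volumen}) with $J(A,B)=(1_A,P1_B)_\mu$ playing the role of $|E(A,B)|$.

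Combining this identity with the non-negativity $(1_A,P1_A)_\mu=J(A,A)\geq 0$ — which holds because $P$ preserves non-negativity, so $P1_A\geq 0$ — gives $(1_{S_i},P1_{\overline{S}_i})_\mu/\mu(S_i)\leq 1-2(1_{A_{2i-1}},P1_{A_{2i}})_\mu/\mu(S_i)$ for every $i$, and therefore $h_P(k)\leq 1-\min_{1\leq i\leq k}2(1_{A_{2i-1}},P1_{A_{2i}})_\mu/\mu(A_{2i-1}\cup A_{2i})$. Since the $k$-sub-bipartition was arbitrary, taking the supremum over all of them yields $h_P(k)\leq 1-\overline{h}_P(k)$, which is the claim. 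I do not expect a genuine obstacle here; the only care needed is measure-theoretic bookkeeping — that $\mu(S_i)=(1_{S_i},P1)_\mu$ uses $P$-invariance of $\mu$ together with $P1=1$, that the cross-term merging uses reversibility, and that $J(A,A)\geq 0$ uses the positivity-preserving Markov property — after which the computation is a direct copy of (\ref{abovecalculation}).
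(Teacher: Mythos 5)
Your proposal is correct and follows essentially the same route as the paper: the paper's proof also reduces to the decomposition $\mu(S)=J(S,\overline{S})+2J(A_1,A_2)+J(A_1,A_1)+J(A_2,A_2)$ with $J(A,B)=(1_A,P1_B)_\mu$, which is precisely the identity you derive from $P1=1$ and reversibility, followed by the same computation as in (\ref{abovecalculation}). Your added detail (non-negativity of $J(A,A)$ from the positivity-preserving property) is exactly the implicit ingredient the paper leaves to the reader.
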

\begin{proof}
This proposition can be proved in the same way as (\ref{abovecalculation}) proves (\ref{connectionwithDual}), bearing in mind the following fact for a partition $A_1\cup A_2$ of $S\subseteq X$:
\begin{equation*}
\mu(S)=J(S, \overline{S})+2J(A_1, A_2)+J(A_{1}, A_{1})+J(A_{2}, A_{2}),
\end{equation*}
and recalling (\ref{Jump}).
\end{proof}

We prove the following characterization of $\overline{\lambda}_{\text{ess}}(P)$ in terms of the multi-way dual Cheeger constants.
\begin{thm}\label{thmEssentialspectrum}
Let $P$ be a reversible Markov operator on $L^2(X, \mu)$. Then
\begin{equation}\label{Mycriteria}
\overline{\lambda}_{\text{ess}}(P)>-1 \Leftrightarrow \inf_{k\geq 1}\overline{h}_P(k)<1.
\end{equation}
\end{thm}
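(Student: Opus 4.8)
The plan is to mimic the strategy used by F.-Y. Wang for \eqref{WangCriterion}, replacing the bottom of the spectrum by the top and the operator $P$ by $-P$ whenever possible, but with a genuine twist because the dual Cheeger constant is not simply the Cheeger constant of $-P$ (the operator $-P$ need not be Markovian, so Miclo's partition machinery does not apply directly). First I would record the easy direction. If $\inf_{k\ge 1}\overline{h}_P(k)\ge 1$, then in fact $\overline{h}_P(k)=1$ for every $k$ (each $\overline{h}_P(k)\le 1$ by Proposition \ref{proMarkovCheeger}, since $h_P(k)\ge 0$), and by monotonicity $\overline{h}_P(k)\to 1$. I want to extract from a near-optimal $k$-sub-bipartition a test function showing that $-1$ is approached by the spectrum: given $(A_{2i-1},A_{2i})$ with $\overline{\phi}$ close to $1$, the function $\psi_i:=1_{A_{2i-1}}-1_{A_{2i}}$ satisfies, by the computation underlying Theorem \ref{thmLowerbound} and Lemma \ref{Dual CheegerRayleigh} translated to this setting,
\begin{equation*}
\frac{\bigl((I+P)\psi_i,\psi_i\bigr)_\mu}{(\psi_i,\psi_i)_\mu}\le 2\bigl(1-\overline{\phi}(A_{2i-1},A_{2i})\bigr)+\text{(boundary term)} .
\end{equation*}
Having $k$ of these with pairwise disjoint supports produces a $k$-dimensional subspace on which $I+P$ is small; letting $k\to\infty$ with $\overline{h}_P(k)\to 1$ and controlling the boundary terms forces $\overline{\lambda}_{\text{ess}}(P)=-1$, i.e. the contrapositive of "$\Leftarrow$''. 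The only subtlety is the boundary/remainder term $J(A_{2i-1}\cup A_{2i},\overline{A_{2i-1}\cup A_{2i}})$; here I would pass to the auxiliary constant $\overline{h}^*_P(k)$ (the Markov analogue of $\overline{h}^*(k)$ from Section \ref{sectionLowerbound}) and note that $\overline{h}^*_P(k)\ge \overline{h}_P(k)$, so a sub-bipartition nearly attaining $\overline{h}^*_P(k)$ kills that term.

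For the harder direction "$\Rightarrow$'', assume $\overline{\lambda}_{\text{ess}}(P)>-1$, say $\overline{\lambda}_{\text{ess}}(P)=-1+2\varepsilon$ with $\varepsilon>0$. Then the spectral projection of $P$ onto $[-1,-1+\varepsilon]$ has finite rank, so $I+P\ge \varepsilon I$ on a closed subspace $H_0$ of finite codimension, say codimension $m$. I want to convert this into an upper bound on $\overline{h}_P(k)$ for $k$ large. The route is the approximation procedure of Miclo \cite{Miclo2013}: approximate $P$ by operators $P_n$ living on finite measurable partitions $\mathcal{Q}_n$ of $X$ (conditional expectations), so that $P_n\to P$ strongly and the relevant quadratic forms converge. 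On each finite model one has a genuine finite "graph'' $G_n$ with $N_n$ vertices, and the finite higher-order dual Cheeger inequality, Theorem \ref{TheoremMain}, applies:
\begin{equation*}
\frac{1}{C^2k^6}\bigl(1-\overline{h}(k;G_n)\bigr)^2\le 2-\lambda_{N_n-k+1}(G_n).
\end{equation*}
The key point is that the spectral gap $2-\lambda_{N_n-k+1}(G_n)=\bigl((I+P_n)\text{ restricted}\bigr)$ stays bounded below: because $I+P\ge\varepsilon$ on a subspace of codimension $m$, for $n$ large the analogous statement holds for $P_n$, so $2-\lambda_{N_n-k+1}(G_n)\ge\varepsilon/2$ for all $k>m$. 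Feeding this into the inequality above gives $1-\overline{h}(k;G_n)\ge \varepsilon/(2C^2k^6)^{1/2}$ uniformly in $n$; since $\overline{h}(k;G_n)\to \overline{h}_P(k)$ along the approximation (this convergence is the content of Miclo's method, one inequality by restricting sub-bipartitions of $X$ to the partition $\mathcal{Q}_n$, the other by refining), we conclude $1-\overline{h}_P(k)\ge \varepsilon^{1/2}/(\sqrt{2}\,C\,k^{3})>0$ for every $k>m$, hence $\inf_{k}\overline{h}_P(k)<1$.

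The step I expect to be the main obstacle is making the approximation $P_n\to P$ interact correctly with the dual constants: one must show both that a near-optimal $k$-sub-bipartition of $X$ projects to a $k$-sub-bipartition on $\mathcal{Q}_n$ with almost the same $\overline{\phi}$-value (so $\liminf_n \overline{h}(k;G_n)\ge \overline{h}_P(k)$), and, conversely and more delicately, that $\limsup_n\overline{h}(k;G_n)\le\overline{h}_P(k)$, which requires "lifting'' partition-level pairs $(A_{2i-1},A_{2i})$ back to $X$ — here one should choose the partitions $\mathcal{Q}_n$ increasingly fine and generating $\mathcal{F}$, and invoke a martingale/density argument so that $J$-measures of cylinder sets converge to $J$-measures of their limits. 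A secondary technical nuisance is the passage from the finite spectral gap hypothesis "$I+P\ge\varepsilon$ off a finite-dimensional piece'' to the corresponding statement for $P_n$; this is standard (strong convergence plus finite rank of the bad spectral projection) but needs to be stated carefully, perhaps via a Weyl-sequence characterization of $\overline{\lambda}_{\text{ess}}$. Everything else — the finite-dimensional input \eqref{My2}, Proposition \ref{proMarkovCheeger}, and the elementary test-function estimate — is already in hand.
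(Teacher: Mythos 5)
There is a genuine gap, and it is logical rather than technical: both halves of your proposal prove the \emph{same} implication, and the implication that actually needs the new machinery is never addressed. Your ``easy direction'' shows that $\inf_{k\geq 1}\overline{h}_P(k)\geq 1$ forces $\overline{\lambda}_{\text{ess}}(P)=-1$; that is the contrapositive of ``$\Rightarrow$'' (not of ``$\Leftarrow$'', as you label it). Your ``harder direction'' then assumes $\overline{\lambda}_{\text{ess}}(P)>-1$ and concludes $\inf_k\overline{h}_P(k)<1$, which is ``$\Rightarrow$'' again, proved a second time. The direction ``$\Leftarrow$'', i.e.\ $\inf_k\overline{h}_P(k)<1\Rightarrow\overline{\lambda}_{\text{ess}}(P)>-1$, is missing, and it is exactly the direction that requires the Markov-operator version of the hard half of (\ref{My2}), namely $\frac{1}{C^2k^6}(1-\overline{h}_P(k))^2\leq\overline{\lambda}_k$ from Theorem \ref{thmMarkovHCI}. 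In the paper this bound is obtained by the Miclo--Wang approximation used in the \emph{painless} direction: approximate near-minimizers for $\overline{\lambda}_k$ of $\overline{L}=I+P$ by simple functions over a finite partition, so the $k$-th smallest eigenvalue of the finite model of $I+P$ is at most $\overline{\lambda}_k+\epsilon$; apply Theorem \ref{TheoremMain} there; then simply \emph{lift} the resulting cell-level $k$-sub-bipartition back to $X$ (unions of cells are measurable and all $J$- and $\mu$-quantities are unchanged), giving $\overline{h}_P(k)\geq 1-Ck^3\sqrt{\overline{\lambda}_k+\epsilon}$. With this, ``$\Leftarrow$'' is immediate in contrapositive form: if $\overline{\lambda}_{\text{ess}}(P)=-1$ then $0\leq\overline{\lambda}_k\leq\overline{\lambda}_{\text{ess}}(\overline{L})=0$ for every $k$, hence $\overline{h}_P(k)=1$ for all $k$. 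In particular the step you single out as the main obstacle --- projecting an arbitrary measurable $k$-sub-bipartition of $X$ down to $\mathcal{Q}_n$ so that $\liminf_n\overline{h}(k;G_n)\geq\overline{h}_P(k)$ --- is never needed; only the trivial lifting direction enters.

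Two secondary points. First, in your second part you feed the uniform gap $2-\lambda_{N_n-k+1}(G_n)\geq\varepsilon/2$ into the wrong half of (\ref{My2}): the inequality $\frac{1}{C^2k^6}(1-\overline{h}(k;G_n))^2\leq 2-\lambda_{N_n-k+1}(G_n)$ bounds $1-\overline{h}(k;G_n)$ only from \emph{above}, so your displayed conclusion $1-\overline{h}(k;G_n)\geq\varepsilon^{1/2}/(\sqrt{2}Ck^3)$ does not follow from it; the bound you want comes from the elementary half $2-\lambda_{N_n-k+1}\leq 2(1-\overline{h}(k;G_n))$, which needs neither the hard finite inequality nor any approximation scheme. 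Indeed the paper's proof of ``$\Rightarrow$'' is two lines: if $\overline{\lambda}_{\text{ess}}(\overline{L})>0$, the spectrum of $\overline{L}$ below it is discrete with finite multiplicities, so $\overline{\lambda}_k>0$ for $k$ large, and the test-function bound $\overline{\lambda}_k\leq 2(1-\overline{h}_P(k))$ gives $\overline{h}_P(k)<1$. Second, your part 1 is essentially this argument run in contrapositive and is fine, but the boundary term you worry about there has a favourable sign: with $\psi=1_{A_{2i-1}}-1_{A_{2i}}$ one computes $(\psi,(I+P)\psi)_\mu/(\psi,\psi)_\mu=2-2\overline{\phi}(A_{2i-1},A_{2i})-J(A_{2i-1}\cup A_{2i},\overline{A_{2i-1}\cup A_{2i}})/\mu(A_{2i-1}\cup A_{2i})\leq 2\bigl(1-\overline{\phi}(A_{2i-1},A_{2i})\bigr)$, so the detour through a starred constant $\overline{h}^*_P(k)$ is unnecessary.
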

An immediate corollary is the following.
\begin{coro}\label{coroStrengthenWang}
Let $P$ be a reversible Markov operator on $L^2(X, \mu)$. Then
$$\sup_{k\geq 1}h_P(k)>0 \Leftrightarrow -1<\overline{\lambda}_{\text{ess}}(P)\leq \lambda_{\text{ess}}(P)<1.$$
\end{coro}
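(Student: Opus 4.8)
The plan is to obtain Corollary \ref{coroStrengthenWang} as a purely formal consequence of three facts already in hand: F.-Y. Wang's criterion \eqref{WangCriterion}, Theorem \ref{thmEssentialspectrum}, and the inequality $h_P(k)+\overline{h}_P(k)\leq 1$ of Proposition \ref{proMarkovCheeger}. No new analytic work is required; all the genuine effort is already spent in proving Theorem \ref{thmEssentialspectrum} (via Miclo's approximation procedure) and in Wang's theorem.

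First I would dispose of the easy implication. If $-1<\overline{\lambda}_{\text{ess}}(P)\leq\lambda_{\text{ess}}(P)<1$, then in particular $\lambda_{\text{ess}}(P)<1$, so Wang's criterion \eqref{WangCriterion} gives $\sup_{k\geq 1}h_P(k)>0$ immediately.

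For the converse, suppose $\sup_{k\geq 1}h_P(k)>0$. Wang's criterion \eqref{WangCriterion} yields $\lambda_{\text{ess}}(P)<1$ at once, and the ordering $\overline{\lambda}_{\text{ess}}(P)\leq\lambda_{\text{ess}}(P)$ is immediate since $\overline{\lambda}_{\text{ess}}$ and $\lambda_{\text{ess}}$ are respectively the infimum and supremum of the (nonempty) set $\sigma_{\text{ess}}(P)$. It remains to show $\overline{\lambda}_{\text{ess}}(P)>-1$, which by Theorem \ref{thmEssentialspectrum} is equivalent to $\inf_{k\geq 1}\overline{h}_P(k)<1$. Since $\sup_{k\geq 1}h_P(k)>0$, pick $k_0$ with $h_P(k_0)>0$; then Proposition \ref{proMarkovCheeger} gives $\overline{h}_P(k_0)\leq 1-h_P(k_0)<1$, so $\inf_{k\geq 1}\overline{h}_P(k)<1$, as desired. (One may alternatively invoke the monotonicity $h_P(k)\leq h_P(k+1)$ to take $k_0$ large.)

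The only minor point that deserves a remark is that $\sigma_{\text{ess}}(P)\neq\emptyset$, so that the chain of inequalities $\overline{\lambda}_{\text{ess}}(P)\leq\lambda_{\text{ess}}(P)$ is meaningful; this holds because, throughout this section, $P$ is a bounded self-adjoint operator on the infinite-dimensional Hilbert space $L^2(X,\mu)$, whence its essential spectrum is nonempty. Apart from this, there is no real obstacle: the corollary is a direct assembly of the preceding results, and the depth lies entirely in Theorem \ref{thmEssentialspectrum} and in Wang's theorem.
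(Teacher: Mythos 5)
Your proposal is correct and follows essentially the same route as the paper: the corollary is assembled from Wang's criterion (\ref{WangCriterion}), Theorem \ref{thmEssentialspectrum}, and the observation (via Proposition \ref{proMarkovCheeger}) that $\sup_{k\geq 1}h_P(k)>0$ forces $\inf_{k\geq 1}\overline{h}_P(k)<1$, which is exactly the one-line argument the paper gives. Your additional remarks on the ordering $\overline{\lambda}_{\text{ess}}(P)\leq\lambda_{\text{ess}}(P)$ and the nonemptiness of $\sigma_{\text{ess}}(P)$ are harmless elaborations of points the paper leaves implicit.
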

This is true, because by Proposition \ref{proMarkovCheeger}, the condition $\inf_{k\geq 1}\overline{h}_P(k)<1$ is weaker than $\sup_{k\geq 1}h_P(k)>0$.

To prove Theorem \ref{thmEssentialspectrum}, we need to extend the higher-order dual Cheeger inequalities to the present setting. Recalling the comments after (\ref{2minusEigen}), the proper operator we should use here is $\overline{L}=I+P$, which is bounded and self-adjoint. Then we can follow \cite{Wang2014} to study
\begin{equation*}
\overline{\lambda}_k:=\sup_{f_1, \ldots,f_{k-1}\in L^2(\mu)}\inf_{\substack{(f,f_i)_{\mu}=0\\\forall 1\leq i\leq k-1}}\frac{(f, \overline{L}f)_{\mu}}{(f, f)_{\mu}}.
\end{equation*}
Define $\overline{\lambda}_{\text{ess}}(\overline{L}):=\inf \sigma_{\text{ess}}(\overline{L})$. Then $\overline{\lambda}_k$ is the $k$-th eigenvalue of $\overline{L}$ if $\overline{\lambda}_k<\overline{\lambda}_{\text{ess}}(\overline{L})$ and $\overline{\lambda}_k=\overline{\lambda}_{\text{ess}}(\overline{L})$ otherwise (see e.g. \cite{Wang2014}).

We can now state the following inequalities.
\begin{thm}\label{thmMarkovHCI}
Let $C$ be the same constant as in (\ref{My2}). Then, for $k\geq 1$,
\begin{equation}\label{HCIMarkov}
\frac{1}{C^2k^6}(1-\overline{h}_P(k))^2\leq \overline{\lambda}_k\leq 2(1-\overline{h}_P(k)).
\end{equation}
\end{thm}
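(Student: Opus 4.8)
The plan is to treat the two inequalities in (\ref{HCIMarkov}) separately. The upper bound $\overline\lambda_k\le 2(1-\overline h_P(k))$ I would prove directly, by a test-function computation that is the translation of Theorem \ref{thmLowerbound} into the language of the symmetric jump measure $J$. The lower bound $\frac{1}{C^2k^6}(1-\overline h_P(k))^2\le\overline\lambda_k$ is the substantial half, and I would deduce it from the finite-graph inequality (\ref{My2}) by a discretization, in the spirit of Miclo \cite{Miclo2013} and F.-Y. Wang \cite{Wang2014}.

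For the upper bound I use the min-max description $\overline\lambda_k=\inf_{\dim W=k}\sup_{0\ne g\in W}\frac{(g,\overline L g)_\mu}{(g,g)_\mu}$, valid for the bounded self-adjoint operator $\overline L=I+P$ irrespective of whether $\overline\lambda_k$ lies below $\overline\lambda_{\mathrm{ess}}(\overline L)$. Given a $k$-sub-bipartition $(A_1,A_2),\dots,(A_{2k-1},A_{2k})$ of $X$, let $W$ be spanned by the pairwise $L^2(\mu)$-orthogonal functions $f_i=1_{A_{2i-1}}-1_{A_{2i}}$. Using $(g,\overline L g)_\mu=\tfrac12\iint(g(x)+g(y))^2\,J(dx,dy)$, the decomposition of $\mu(A_{2i-1}\cup A_{2i})$ supplied by (\ref{jumpmeasure})--(\ref{Jump}), and the crude bound $(\pm a_i\pm a_j)^2\le 2a_i^2+2a_j^2$ on the cross terms of $g=\sum_i a_if_i$, I expect to obtain
\[
(g,\overline L g)_\mu\le 2\sum_i a_i^2\Big(\mu(A_{2i-1}\cup A_{2i})-2(1_{A_{2i-1}},P1_{A_{2i}})_\mu\Big),
\]
hence $\frac{(g,\overline L g)_\mu}{(g,g)_\mu}\le 2\max_i\big(1-\overline\phi_P(A_{2i-1},A_{2i})\big)$ for every $g\in W$. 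As $W$ has dimension $k$ this gives $\overline\lambda_k\le 2\big(1-\min_i\overline\phi_P(A_{2i-1},A_{2i})\big)$, and taking the supremum over all $k$-sub-bipartitions yields $\overline\lambda_k\le 2(1-\overline h_P(k))$. I expect no difficulty here.

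For the lower bound, fix $\epsilon>0$. Since $\dim E_{\overline L}\big([0,\overline\lambda_k+\epsilon)\big)\ge k$, since $\overline L=I+P$ is bounded (so $g\mapsto(g,\overline L g)_\mu$ is $L^2$-continuous), and since simple functions are dense in $L^2(\mu)$, one can choose $l^2(\mu)$-orthonormal functions $f_1,\dots,f_k$, each measurable with respect to one common finite partition $\mathcal A$ of $X$, such that $(g,\overline L g)_\mu\le(\overline\lambda_k+\epsilon)(g,g)_\mu$ for every $g$ in their span (a Gram--Schmidt step absorbs the small loss of orthonormality). Let $G_{\mathcal A}$ be the finite weighted graph, with self-loops allowed, whose vertices are the atoms of $\mathcal A$, with weights $w_{AB}=(1_A,P1_B)_\mu$ and degrees $d_A=\mu(A)$; for functions constant on atoms the $l^2$-inner product and the form $(\cdot,(I+P)\cdot)$ of $G_{\mathcal A}$ coincide with those on $X$. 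By the Courant--Fischer principle the $k$-th lowest eigenvalue of $I+P_{G_{\mathcal A}}$, i.e. $2-\lambda_{N-k+1}(G_{\mathcal A})$ with $N$ the number of atoms, is therefore at most $\max_{g\in\mathrm{span}(f_1,\dots,f_k)}\frac{(g,\overline L g)_\mu}{(g,g)_\mu}\le\overline\lambda_k+\epsilon$. On the other hand, every $k$-sub-bipartition of $G_{\mathcal A}$ consists of unions of atoms, hence lifts to a $k$-sub-bipartition of $X$ with identical values of $\overline\phi$ (the weights of $G_{\mathcal A}$ being exactly the $J$-masses), so $\overline h_{G_{\mathcal A}}(k)\le\overline h_P(k)$. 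Applying the finite-graph inequality (\ref{My2}) of Theorem \ref{TheoremMain}, in its version for graphs with self-loops as remarked after the introduction, to $G_{\mathcal A}$ gives
\[
\frac{1}{C^2k^6}(1-\overline h_P(k))^2\le\frac{1}{C^2k^6}(1-\overline h_{G_{\mathcal A}}(k))^2\le 2-\lambda_{N-k+1}(G_{\mathcal A})\le\overline\lambda_k+\epsilon,
\]
and letting $\epsilon\to0$ yields $\frac{1}{C^2k^6}(1-\overline h_P(k))^2\le\overline\lambda_k$ with the same constant $C$ as in (\ref{My2}).

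The main obstacle is this last discretization. The passage $\overline h_{G_{\mathcal A}}(k)\le\overline h_P(k)$ is automatic, but one must select the partition $\mathcal A$ so that it carries functions whose entire span has Rayleigh quotient close to $\overline\lambda_k$ for $I+P$; merely $L^2$-approximating arbitrary functions would not keep the energy small. What makes this work cheaply is precisely the boundedness of $\overline L=I+P$: its quadratic form is $L^2$-continuous, so approximating a near-minimizing low-energy subspace by simple functions costs only $\epsilon$ in the Rayleigh quotient. This is why Miclo's approximation procedure, needed in full strength for the essential-spectrum statement (Theorem \ref{thmEssentialspectrum}), is required here only in this light form; a minor additional point is to carry the self-loop bookkeeping of $G_{\mathcal A}$ through the finite-graph results, using $\mu(u)\ge\sum_{v\sim u,\,v\ne u}w_{uv}$ in place of equality, which needs no essential change.
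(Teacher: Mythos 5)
Your proposal is correct and follows essentially the same route as the paper: the upper bound is the test-function argument of Theorem \ref{thmLowerbound} transported via the identity for $(f,(I+P)f)_\mu$ in terms of the jump measure $J$, and the lower bound is exactly the Miclo--Wang finite-partition approximation (the content of Lemma 2.2 in \cite{Wang2014}, with $L=I-P$ replaced by $\overline{L}=I+P$), combined with the finite-graph inequality (\ref{My2}) in its self-loop version and the monotonicity $\overline{h}_{G_{\mathcal A}}(k)\leq\overline{h}_P(k)$. The paper merely cites Wang's proof for this discretization, whereas you spell it out, but the underlying argument is the same.
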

\begin{proof}
The upper bound can be proved by the same technique used in the proof of Theorem \ref{thmLowerbound}. One only needs to keep in mind tha
\begin{equation}
(f, (I+P)f)_{\mu}=2(f,f)_{\mu}-\frac{1}{2}\int_{X\times X}(f(x)-f(y))^2J(dx,dy),
\end{equation}
for any $f\in L^2(X, \mu)$.

For the lower bound, we refer to the proof of Lemma 2.2 in \cite{Wang2014}, because we only need to replace the operator $L=I-P$ there by $\overline{L}$ and use the higher-order dual Cheeger inequalities for the  finite discrete structure there. Basically, the approximation procedure only involves the operator $P$. We also recall here the fact that, on a graph with $N$ vertices, $\overline{\lambda}_k$ of the operator $I+P$ equals $2-\lambda_{N-k+1}$, where $\lambda_{N-k+1}$ is the $(N-k+1)$-th eigenvalue of $\Delta=I-P$.
\end{proof}
\begin{proof}[Proof of Theorem \ref{thmEssentialspectrum}] The proof can be done in the same way as \cite{Wang2014}. For the reader's convenience, we recall it here. First observe
\begin{equation*}
\overline{\lambda}_{\text{ess}}(P)>-1 \Leftrightarrow \overline{\lambda}_{\text{ess}}(\overline{L})>0.
\end{equation*}
If $\overline{\lambda}_{\text{ess}}(\overline{L})>0$, then $\sigma(\overline{L})\cap [0, \overline{\lambda}_{\text{ess}}(\overline{L}))$ is discrete and every eigenvalue in it has finite multiplicity. Therefore, $\overline{\lambda}_k>0$ for large enough $k$. Hence by (\ref{HCIMarkov}), $1-\overline{h}_P(k)>0$ for large $k$.

Otherwise, if $\overline{\lambda}_{\text{ess}}(\overline{L})=0$, then $0\in \sigma_{\text{ess}}(\overline{L})$, and therefore $\overline{\lambda}_k=0$ for all $k$. Now using (\ref{HCIMarkov}) again, we arrive at $\overline{h}_P(k)=1$ for all $k$.
\end{proof}
\begin{remark}
Observe that for this application, the order of $k$ in (\ref{HCIMarkov}) is not important. But we do need the constant in (\ref{My2}) to be universal for any weighted finite graph to derive (\ref{HCIMarkov}) via the approximation procedure in \cite{Miclo2008}, \cite{Wang2014}.
\end{remark}

\section*{Acknowledgements}
The author is very grateful to Norbert Peyerimhoff for many patient enlightening discussions about this topic and in particular for suggesting the example of cycles. The author thanks Feng-Yu Wang for his generous comments, especially to Corollary \ref{coroStrengthenWang}. Thanks also go to Rub\'{e}n S\'{a}nchez-Garc\'{\i}a for the inspiring discussions from which the author learned the topic of multi-way Cheeger inequalities, and to Ioannis Ivrissimtzis for other helpful discussions. Finally, the author acknowledges many useful comments of the anonymous referee.
This work was supported by the EPSRC Grant EP/K016687/1 "Topology, Geometry and Laplacians of Simplicial Complexes".

\end{document}